\newtheorem{theorem}{Theorem}[section]
\newtheorem{lemma}[theorem]{Lemma}
\newtheorem{conjecture}[theorem]{Conjecture}
\newtheorem{proposition}[theorem]{Proposition}
\newtheorem{prop}[theorem]{Proposition}
\newtheorem{corollary}[theorem]{Corollary}
\theoremstyle{definition}
\newtheorem{defn}[theorem]{Definition}
\newtheorem{eg}[theorem]{Example}
\newtheorem{question}[theorem]{Question}
\newtheorem{rmk}[theorem]{Remark}
\newcommand{\R}{\mathbb R}
\def\been{\begin{enumerate}}
\def\enen{\end{enumerate}}
\newcounter{todocounter}
\title[Identifiability of linear compartmental models]{Linear compartmental models: input-output equations and operations that preserve identifiability}
\author[Gross]{Elizabeth Gross}
\address{University of Hawaii at Manoa}
\author[Harrington]{Heather Harrington}
\address{University of Oxford}
\author[Meshkat]{Nicolette Meshkat}
\address{Santa Clara University}
\author[Shiu]{Anne Shiu}
\address{Texas A\&M University}
\date{May 24, 2019}
\begin{document}
\maketitle

% TO DO LIST
%\makeatletter
%\providecommand\@dotsep{5}
%\makeatother
%\listoftodos\relax

\begin{abstract}
This work focuses on the question of how identifiability of a mathematical model, that is, whether parameters can be recovered from data, is related to identifiability of its submodels.  We look specifically at linear compartmental models and investigate when identifiability is preserved after adding or removing model components. 
In particular, we examine whether identifiability is preserved when an input, output, edge, or leak is added or deleted.  
Our approach, via differential algebra, is to analyze specific input-output equations of a model and the Jacobian of the associated coefficient map.
We clarify a prior determinantal formula for these equations, 
and then use it to 
%We generalize prior formulas for these equations to allow for models with no leaks. %{\color{blue} or (if we can!) no inputs.}
%Finally, we 
prove that, under some hypotheses, a model's 
input-output equations can be understood in terms of 
certain submodels we call ``output-reachable''.
%its strongly connected components.
Our proofs use algebraic and combinatorial techniques.

  \vskip 0.1cm
%: Keywords
  \noindent \textbf{Keywords:} identifiability, linear compartmental model, input-output equation, matrix-tree theorem

\end{abstract}

\maketitle

\section{Introduction} \label{sec:intro} Identifiability refers to the
property possessed by a mathematical model when the model's
  parameters can be recovered from data.  We focus on structural
identifiability, that is, whether the model equations allow
unique determination of a finite number of parameters
from noise-free and continuous data, henceforth
referred to as identifiability. 
A model is identifiable if the map from
  the parameters of the model to output trajectories is injective.
  We assume the model is known, so our focus is on parameter
  identifiability rather than model identifiability.

Here we investigate the problem of assessing
identifiability for a class of models used extensively in biological
applications, namely, linear compartmental
models~\cite{distefano-book,godfrey}.  Our interest is in the
following question: When is identifiability preserved after components
of the model -- such as inputs, outputs, leaks, or edges -- are added
or removed?  For example, in a prior work~\cite{singularlocus},
sufficient conditions are given for when edges can be removed.  In
this work, we show that adding outputs or inputs -- or, under certain
hypotheses, adding or removing a leak -- preserves
identifiability. These results, as well as additional new
contributions, are summarized in Tables~\ref{table:add-delete}
and~\ref{table:add-delete-uniden} below.

The question of when a submodel of an identifiable model is
identifiable has both theoretical significance and real-life
applications (e.g., identifiability of nested models in epidemiology
\cite{Tuncer2016}). On the theory side, the investigation of submodels
of identifiable models was initiated by Vajda and others
\cite{vajda1984, vajdaetal} with the goal of reducing the 
 problem of assessing identifiability to simpler and more tractable computations. Since then, it has remained an interesting problem, as it addresses one of the most important questions regarding linear compartmental models: \emph{Which models are identifiable?}  
  
%In terms of biological applications, 
%The operation of 
In applications, deleting an input or output corresponds to changing the experimental setup or changing the design of a biological circuit within a cell. Hence, if we know that the corresponding submodel remains identifiable, the new setup 
%does not change the 
will not affect the ability to recover parameters. As for edges and leaks, deleting these elements is a way to model a biological intervention or knockout.  Also, in practice,
the precise model architecture may be unknown, so 
being able to investigate identifiability of a few candidate models at once is desirable.

Another motivation for our work is its application to chemical reaction networks.  Linear compartmental models correspond to monomolecular chemical reaction networks, and we will use the results here to analyze, in subsequent work~\cite{join}, 
when identifiability is preserved when networks are joined or decomposed.

For linear compartmental models, the problem of assessing (generic local) identifiability 
can be translated, through standard differential algebra techniques, 
to the question of whether the Jacobian matrix of the {\em coefficient map} (arising from certain {\em input-output equations}) is generically full rank.
%A formula for the input-output equations for strongly connected models 
%with at least one leak and at least one input was 
A formula for the input-output equations for models with at least one input was 
given by Meshkat, Sullivant, and Eisenberg~\cite{MeshkatSullivantEisenberg}.
Here we clarify their formula (see Proposition~\ref{prop:i-o} and Remark~\ref{rmk:gcd}), 
%prove that this formula applies even to models with no leaks (Theorem~\ref{thm:i-o-for-strongly-connected}). %{\color{blue} or no inputs (future theorem!)}.  
and then use that result to explain how input-output equations can be read off from the input-output equations of submodels arising from what we call \textit{output-reachable} subgraphs (Theorem~\ref{thm:ioscc}).  

\begin{table}[th]
\centering
\begin{tabular}{ll}
\hline
{\bf Operation}     	& {\bf Operation preserves identifiability?}       \\  \hline
Add input			&  Yes* (Proposition~\ref{prop:add-in-out})
\\
Add output		&  Yes* (Proposition~\ref{prop:add-in-out})
\\
Add leak			&  Not always (See Example~\ref{ex:add-delete-leak}); Yes, 
	%if original model is strongly connected with no leaks 
				under certain hypotheses (Theorem~\ref{thm:add-leak})
\\
Add edge			&  Not always (See Example~\ref{ex:add-delete-edge})
\\ \hline
Delete input			&  Not always (See Example~\ref{ex:delete-in})
\\
Delete output			&  Not always (See Example~\ref{ex:delete-out})
\\
Delete leak			& Open (See Question~\ref{q:leak}); %(See Example~\ref{ex:add-delete-leak}); 
				Yes, under certain hypotheses (Proposition~\ref{prop:removeleaks})
\\
Delete edge			&  Not always  (See Example~\ref{ex:add-delete-edge}); Yes, %for strongly connected models, if edge parameter fails to divide singular-locus equation
						under certain hypotheses \cite[Theorem 3.1]{singularlocus}
\\
\hline
\end{tabular}
\caption{Operations on linear compartmental models, and whether they preserve identifiability. Here, * pertains to models with at least one input.
\label{table:add-delete} }
\end{table}

\begin{table}[th]
\centering
\begin{tabular}{ll}
\hline
{\bf Operation}     	& {\bf Operation preserves unidentifiability?}       
\\  \hline
Delete input			&  Yes* (Proposition~\ref{prop:add-in-out})
\\
Delete output		&  Yes* (Proposition~\ref{prop:add-in-out})
\\
Delete leak			&  Not always (See Example~\ref{ex:add-delete-leak}); Yes, 
				under certain hypotheses (Theorem~\ref{thm:add-leak})
\\
Delete edge			&  Not always (See Example~\ref{ex:add-delete-edge})
\\ \hline
Add input			&  Not always (See Example~\ref{ex:delete-in})
\\
Add output			&  Not always (See Example~\ref{ex:delete-out})
\\
Add leak			& Open (See Question~\ref{q:leak}); %(See Example~\ref{ex:add-delete-leak}); 
				Yes, under certain hypotheses (Proposition~\ref{prop:removeleaks})
\\
Add edge			&  Not always  (See Example~\ref{ex:add-delete-edge}) %; Yes, 
						%under certain hypotheses \cite[Theorem 3.1]{singularlocus}
\\
\hline
\end{tabular}
\caption{Operations on linear compartmental models, and whether they
  preserve unidentifiability.  
A model is {\em unidentifiable} if it is not identifiable.
Also, * pertains to models with at least one input.
\label{table:add-delete-uniden} }
\end{table}

We apply our results on input-output equations to investigate whether identifiability is preserved when a leak is added (Theorem~\ref{thm:add-leak}) or removed (Proposition~\ref{prop:removeleaks}).  
The effect on identifiability of adding or deleting a component is not always predictable, as seen in Tables~\ref{table:add-delete} and~\ref{table:add-delete-uniden}.  The rows of these two tables are similar, because, for instance, losing identifiabilty when a leak is added can also be viewed as gaining identifiability when the leak is removed.

\begin{rmk} Tables~\ref{table:add-delete} and~\ref{table:add-delete-uniden} are nearly identical, except that, in the last line, \cite[Theorem 3.1]{singularlocus} does not apply to unidentifiable models: that theorem makes use of the ``singular-locus equation'' which is not defined for unidentifiable models. % in the case of adding an edge to an unidentifiable model.
\end{rmk}

 One of the significant contributions of our results is
  that they directly ``transfer'' information about the idenfiability of a model to a related model 
  -- without additional computation.  We illustrate this in
  Proposition~\ref{prop:3-models}, where we quickly conclude that
  three infinite families of models are all identifiable.  Another
  rationale behind our work, hinted at earlier, is to develop tools
  that will enable a future catalogue of identifiable models.

The outline of our work is as follows.
In Section~\ref{sec:background}, we introduce linear compartmental models and define identifiability. 
In Section~\ref{sec:i-o}, we prove our results on input-output equations.  
%how the input-output equations of a model built from strongly connected components can be read off from each component.
In Section~\ref{sec:leak}, we prove results on how identifiability is affected by adding or removing inputs, outputs, or leaks; and then we give some related examples in Section~\ref{sec:ex}. 
We conclude with a discussion in Section~\ref{sec:discussion}.

\section{Background} \label{sec:background}
In this section, we recall linear compartmental models, their input-output equations, and the concept of identifiability.  We follow closely the notation in~\cite{singularlocus}.

A {\em linear compartmental model} is defined by a directed graph 
$G = (V,E)$ %(with vertex set $V$ and set of directed edges $E$) 
and three sets $In, Out,$ $Leak \subseteq V$.
Each vertex $i \in V$ is a
compartment in the model, while each edge $j \rightarrow i$ represents  
the flow of material from the $j$-th compartment to the
$i$-th compartment. The sets 
$In$, $Out$, and $Leak$ are the
sets of input, output, and leak compartments, 
respectively.  
Each compartment $i \in In$ 
has an external input $u_i(t)$ driving the system, whereas 
each compartment $j \in Out$ 
is measurable. 
A compartment $k \in Leak$ has some constant rate of flow that leaves the system (i.e., does not flow into any other compartment).
We assume that $Out$ is nonempty (as, otherwise, no variables can be observed and hence the model parameters cannot be recovered).

Consistent with the literature, we indicate output compartments by this symbol: \begin{tikzpicture}[scale=0.7]
 	\draw (4.66,-.49) circle (0.05);	
	 \draw[-] (5, -.15) -- (4.7, -.45);	
\end{tikzpicture} .  
Input compartments are labeled by ``in'', and leaks are indicated by outgoing edges.  
For instance, the linear compartmental model in Figure~\ref{fig:model} 
has $In=Out=\{1,3\}$ and $Leak =\{1,4\}$.

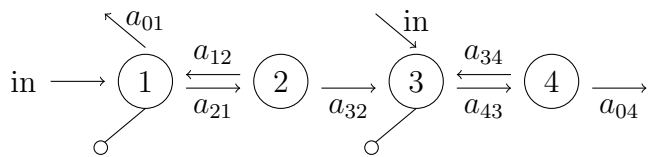
\begin{figure}[ht]
\begin{center}
	\begin{tikzpicture}[scale=1.8]
%-----------------------------
% Modified Catenary 
%-----------------------------
 	\draw (0,0) circle (0.2);	
 	\draw (1,0) circle (0.2);	
 	\draw (2,0) circle (0.2);	
 	\draw (3,0) circle (0.2);	
% LABEL: numbers
    	\node[] at (0, 0) {1};
    	\node[] at (1, 0) {$2$};
    	\node[] at (2, 0) {$3$};
    	\node[] at (3, 0) {$4$};
% ARROWS
	 \draw[<-] (0.3, 0.05) -- (0.7, 0.05);	
	 \draw[->] (0.3, -0.05) -- (0.7, -0.05);	
%	 \draw[<-] (1.3, 0.05) -- (1.7, 0.05);	
	 \draw[->] (1.3, -0.05) -- (1.7, -0.05);	
	 \draw[<-] (2.3, 0.05) -- (2.7, 0.05);	
	 \draw[->] (2.3, -0.05) -- (2.7, -0.05);	
%	 \draw[<-] (3.3, 0.05) -- (3.7, 0.05);	
	 \draw[->] (3.3, -0.05) -- (3.7, -0.05);	
% RATES
   	 \node[] at (0.5, 0.2) {$a_{12}$};
%   	 \node[] at (1.5, 0.2) {$a_{23}$};
   	 \node[] at (2.5, 0.2) {$a_{34}$};
%   	 \node[] at (3.5, 0.2) {$a_{04}$};
   	 \node[] at (0.5, -0.2) {$a_{21}$};
   	 \node[] at (1.5, -0.2) {$a_{32}$};
   	 \node[] at (2.5, -0.2) {$a_{43}$};
   	 \node[] at (3.5, -0.2) {$a_{04}$};
%OUTPUT
 	\draw (-0.33,-.49) circle (0.05);	
	 \draw[-] (0, -.2 ) -- (-0.3, -.45);	
 	\draw (1.67,-.49) circle (0.05);	
	 \draw[-] (2, -.2 ) -- (1.7, -.45);	
% Input
	 \draw[->] (-0.7, 0) -- (-0.3, 0);	
   	 \node[] at (-.9, 0) {in};
	 \draw[<-] (2, .25) -- (1.7, .5);	
   	 \node[] at (2, 0.45) {in};
% Leak
	 \draw[->] (0, .25) -- (-0.3, .5);	
   	 \node[] at (0, 0.45) {$a_{01}$};
% LABEL: Model name
	\end{tikzpicture}
\end{center}
\caption{A linear compartmental model.}
\label{fig:model}
\end{figure}

To every edge $j \rightarrow i$ in $G$, we associate
a %an independent 
parameter $a_{ij}$, the 
rate constant for the flow
from compartment-$j$ to compartment-$i$.  
Similarly,
to every leak node $i \in Leak$, we associate a 
parameter $a_{0i}$. %, the flow rate from compartment-$i$ leaving the system.  
Letting $n=|V|$,
%To such a graph $G$ and set of leaks $Leak$ we associate t
the {\em compartmental matrix} of a linear compartmental model $(G,In, Out, Leak)$ is the $n \times n$ matrix 
 $A$ %$A(G)$ 
 with entries as follows:
% in the following way:
\[
  %A(G)_{ij} 
  A_{ij} 
  ~:=~ \left\{ 
  \begin{array}{l l l}
    -a_{0i}-\sum_{k: i \rightarrow k \in E}{a_{ki}} & \quad \text{if $i=j$ and } i \in Leak\\
        -\sum_{k: i \rightarrow k \in E}{a_{ki}} & \quad \text{if $i=j$ and } i \notin Leak\\
    a_{ij} & \quad \text{if $j\rightarrow{i}$ is an edge of $G$}\\
    0 & \quad \text{otherwise.}\\
  \end{array} \right.
\]

A linear compartmental model $(G, In, Out, Leak)$ defines a system of linear ODEs (with inputs $u_i(t)$) and 
outputs $y_{i}(t)$ given by:
\begin{align} \label{eq:main}
{x}'(t) ~&=~ Ax(t)+u(t) \\ %\quad \quad 
y_i(t) ~&=~ x_i(t) \quad \quad \mbox{ for } i \in Out~, \notag
\end{align}
 where $u_{i}(t) \equiv 0$ for $i \notin In$.
 In general, linear compartmental models additionally
   involve parameter scalings of input and output variables.  
Such additional parameters can be accommodated by the differential algebra approaches we use here, but we postpone analysis of such models for future work. 

\begin{eg} \label{ex:ode}
For the model in Figure~\ref{fig:model}, the ODEs~\eqref{eq:main} are given by:
\begin{align} \label{eq:A-ex}
\begin{pmatrix}
x_1' \\
x_2' \\
x_3' \\
x_4' 
\end{pmatrix} 
&~=~
\begin{pmatrix}
- a_{01} - a_{21} & a_{12} & 0 & 0 \\
a_{21} & -a_{12}-a_{32} & 0 & 0\\
0 & a_{32} & -a_{43} & a_{34} \\
0 & 0 & a_{43} & -a_{04} -a_{34}
\end{pmatrix}
\begin{pmatrix}
x_1 \\
x_2 \\
x_3 \\
x_4
\end{pmatrix} +
\begin{pmatrix}
u_1 \\
0 \\
u_3 \\
0
\end{pmatrix}~,
\end{align}
with 
output equations $y_1=x_1$ and $y_3 = x_3$.
\end{eg}

\begin{defn} \label{def:strongly-connected} %~
%\begin{enumerate}
%\item  
A directed graph $G$ is \textit{strongly connected} if there exists a directed path from each vertex to every other vertex.  A {\em strong component} of a directed graph $G$ is
a strongly connected, induced subgraph of $G$ that is 
maximal with respect to inclusion.
%\item  A directed graph $G$ is \textit{connected} if 
%its underlying directed graph is connected (that is, there
 %exists a path from each vertex to every other vertex).  A {\em connected component} of a directed graph $G$ is a 
%a connected, induced subgraph of $G$ that is 
%maximal with respect to inclusion.  {\color{violet} Delete if this defn. not used}
%\item 
A linear compartmental model $(G, In, Out, Leak)$ is \textit{strongly connected}
% (respectively,  \textit{connected}) 
if $G$ is strongly connected. % (respectively, connected).
%\end{enumerate}
\end{defn}

% SUBSECTION
\subsection{Input-output equations} \label{subsec:i-o}
{There are many techniques for assessing identifiability of linear
  compartmental models, here we use differential algebra.  One key
  advantage of the differential algebra approach is that it readily
  distinguishes between local and global
  identifiability~\cite{global-id}.  Below we explain how the
  differential algebra approach yields ``input-output equations.''  In
  Section~\ref{sec:identifiability-defn}, we will use such equations to characterize
  identifiability of linear compartmental models.

The general setup is a model $\mathcal{M}$ of the following form:
\begin{align} \label{eq:general-model}
  {x}'(t) &= f(x(t),u(t),p) \\
  \notag
  y(t) &=g(x(t),p)~,
\end{align}
\noindent
where $x(t)$ is the state-variable vector, $u(t)$ is the input vector,
$y(t)$ is the output vector, $p$ is the vector of unknown
(constant) parameters, % (we let $P$ denote the number of parameters), 
and $f$ and $g$ are polynomials.  
We can therefore view the model equations as differential polynomials
in a differential polynomial ring $R(p)[u,y,x]$, i.e., the ring of
polynomials in $x$, $y$, $u$, and their derivatives, with coefficients
in $R(p)$.  
As the unmeasured state variables $x_i$ cannot be determined, we
use differential elimination to 
eliminate all unknown state variables and their derivatives.   
The resulting equations are only in terms of input variables, output
variables, their derivatives, and parameters, so these equations have the following form:
\begin{align} \label{eq:general-i-o}
\sum_i{c_i(p)\Phi_i(u,y)} =0~.
\end{align}
An equation of the form~\eqref{eq:general-i-o} is called an \textit{input-output equation} for
$\mathcal M$. 

One standard ``reduced'' generating set for these input-output equations is formed by those equations in a \textit{characteristic set} (defined precisely in~\cite{glad})
that do not involve the $x_i$'s or their derivatives.
In a characteristic set, which can be computed using the software {\tt DAISY}~\cite{daisy},
each $\Phi_i(u,y)$ in each input-output equation~\eqref{eq:general-i-o} 
is a differential monomial, i.e., a monomial purely
in terms of input variables, output variables, and their derivatives. The terms $c_i(p)$ are called the coefficients of the input-output equations.  These coefficients can be fixed uniquely by normalizing the input-output equations to make them monic \cite{daisy}.

%We describe specific tools to obtain this differential elimination and give a brief background on differential algebra in the Appendix.  

%Since the question of identifiability deals with assessing which parameters can be determined from given input/output data, these equations can be used to define structural identifiability in Definition \ref{defn:identifygeneral}.

Returning to linear compartmental models, 
an input-output equation is an equation that holds along every solution of the ODEs~\eqref{eq:main} and 
    involves only the parameters $a_{ij}$, input variables $u_i$,
    output variables $y_i$, and their derivatives.  
A general form of some of these input-output equations
is given in the following result, which is largely due to Meshkat, Sullivant, and Eisenberg~\cite[Theorem 2]{MeshkatSullivantEisenberg} (our 
contribution 
 is explained in Remark~\ref{rmk:gcd}):
\begin{proposition} %[Meshkat, Sullivant, Eisenberg]
 \label{prop:i-o}
Let $\mathcal M= (G, In, Out, Leak)$ be a  
 linear compartmental model with $n$ compartments and  % \geq 2$ compartments and 
 at least one input.
 % and one output.
%Assume that $In \neq \emptyset$. % is nonempty.  
Define $\partial I$ to be the $n \times n$ matrix in which every diagonal entry is the differential operator $d/dt$ and every off-diagonal entry is 0.  
Let $A$ be the compartmental matrix, and let $\left( \partial I-A \right)_{ji}$ denote the $(n-1) \times (n-1)$ matrix obtained from $\left( \partial I-A \right)$ by removing row $j$ and column $i$.
Then, the following equations 
are input-output equations for $\mathcal M$:
 \begin{align} \label{eq:i-o-for-M-general}
 	\det (\partial I -A) y_i ~=~  \sum_{j \in In}  (-1)^{i+j}  \det \left( \partial I-A \right)_{ji} u_j 
		\quad \quad {\rm for~} i \in Out~.
\end{align}
\end{proposition}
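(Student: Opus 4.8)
The plan is to rewrite the ODE system \eqref{eq:main} as a single matrix equation over the ring of constant-coefficient linear differential operators, and then apply Cramer's rule (equivalently, the adjugate identity) over that ring. Writing $\partial = d/dt$, so that $\partial I$ is the matrix in the statement, and using that $u_i \equiv 0$ for $i \notin In$, the system \eqref{eq:main} is precisely
\begin{align} \label{eq:operator-form}
  (\partial I - A)\, x(t) ~=~ u(t)~,
\end{align}
an identity of vectors of functions of $t$. The entries of $A$ are the constants $a_{ij}$ and $-a_{0i}-\sum_{k} a_{ki}$, so $\partial$ commutes with every entry of $A$; hence $\partial I - A$ can be regarded as a matrix with entries in the \emph{commutative} polynomial ring $R(p)[\partial]$, and every determinantal identity valid over a commutative ring is at our disposal.

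First I would invoke the adjugate identity $\mathrm{adj}(M)\,M = \det(M)\, I$, which holds for any square matrix $M$ over a commutative ring. Left-multiplying \eqref{eq:operator-form} by the operator matrix $\mathrm{adj}(\partial I - A)$ gives $\det(\partial I - A)\, x = \mathrm{adj}(\partial I - A)\, u$. Next I would take the $i$-th coordinate and use the description of the adjugate as the transpose of the cofactor matrix: its $(i,j)$ entry is $(-1)^{i+j}\det\!\left((\partial I - A)_{ji}\right)$, with $(\partial I - A)_{ji}$ the submatrix obtained by deleting row $j$ and column $i$ --- exactly the notation of the statement. This produces
\begin{align} \label{eq:afterCramer}
  \det(\partial I - A)\, x_i ~=~ \sum_{j=1}^{n} (-1)^{i+j}\det\!\left((\partial I - A)_{ji}\right) u_j~.
\end{align}
Finally, for $i \in Out$ I would drop the terms with $j \notin In$ (where $u_j \equiv 0$) and substitute $y_i = x_i$, obtaining \eqref{eq:i-o-for-M-general}. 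One can equivalently phrase the middle step as Cramer's rule directly: replace column $i$ of $\partial I - A$ by the vector $u$, take the determinant, and cofactor-expand along that column.

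There remains the minor matter of confirming that each resulting equation genuinely is an input-output equation for $\mathcal M$: it must hold along every solution of \eqref{eq:main}, and it must involve only the parameters $a_{ij}$, the inputs $u_j$, the outputs $y_i$, and their time-derivatives. The first point is automatic, since \eqref{eq:afterCramer} was obtained from \eqref{eq:operator-form} merely by applying differential operators to both sides; the second is clear once the $x_i$ on the left-hand side have been renamed $y_i$ via the output equations. I do not expect a genuine obstacle here --- the argument is essentially bookkeeping --- but the two points that require care are (i) justifying that $\partial$ may be treated as a central scalar so that Cramer's rule applies at all (this uses crucially that the $a_{ij}$ are constants rather than functions of $t$), and (ii) keeping the sign $(-1)^{i+j}$ and the ``delete row $j$, column $i$'' convention consistent, so that the final identity is literally \eqref{eq:i-o-for-M-general} and not its transpose. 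The genuinely new content relative to \cite[Theorem 2]{MeshkatSullivantEisenberg} --- namely, whether the equations \eqref{eq:i-o-for-M-general} might carry a spurious common factor --- is a separate issue, handled in Remark~\ref{rmk:gcd}, and plays no role in the derivation sketched above.
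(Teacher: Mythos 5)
Your argument is correct and is essentially the paper's own proof: the paper establishes Proposition~\ref{prop:i-o} by pointing to the proof of \cite[Theorem 2]{MeshkatSullivantEisenberg} (minus its final GCD-division step), which likewise writes the system as $(\partial I - A)x = u$ over the commutative operator ring and applies Cramer's rule/the adjugate identity, then sets $y_i = x_i$ and drops the terms with $u_j \equiv 0$. Your bookkeeping on the cofactor indexing and the observation that constancy of the $a_{ij}$ makes $\partial$ central match the intended argument, so there is nothing to add.
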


We will refer to the input-output equations~\eqref{eq:i-o-for-M-general} as \underline{the} input-output equations. 
(See, for instance, Example~\ref{ex:caution} below.) %Theorem~\ref{thm:i-o-for-strongly-connected}.)

%{\color{red} To-do: Add a remark that Laplace Transform method can also be used for linear models, but to use this method, must address issue of initial conditions.} {\color{violet}Not sure need to add this remark, or maybe it can be moved somewhere else?}

\begin{rmk} \label{rmk:det}
By convention, the determinant of the empty matrix equals 1, so when $n=1$,  we have $\det \left( \partial I-A \right)_{11}:=1$ in the input-output equation~\eqref{eq:i-o-for-M-general}.
\end{rmk}

\begin{rmk} \label{rmk:i-o}
Proposition~\ref{prop:i-o} %, which is~\cite[Theorem 2]{MeshkatSullivantEisenberg}, 
requires  
that $\mathcal{M}$ have at least one input.  This hypothesis, not stated in~\cite[Theorem 2]{MeshkatSullivantEisenberg}, is required in the proof. 
Namely, Cramer's rule is applied to the system $(\partial I - A)x=u$, which requires that the input vector $u$ be a nonzero vector (i.e., at least one input).  
\end{rmk}

\begin{rmk}[Input-output equation for models with no inputs and one output] \label{rmk:1-out}
In spite of Remark~\ref{rmk:i-o},
the input-output equation~\eqref{eq:i-o-for-M-general} generalizes to 
models with no inputs and a single output $y_i$.  
Namely, the input-output equation in this case is just ${\det (\partial I -A)} y_i=0$, due to the fact that the ODE system is $(\partial I - A)x=0$ and thus has a nonzero solution $x$ if and only if ${\det (\partial I -A)} =0$.  This result is seen easily from the \textit{transfer function approach} to finding input-output equations, which is detailed in \cite{distefano-book}.
\end{rmk}

\begin{rmk}[Our contribution to Proposition~\ref{prop:i-o}] \label{rmk:gcd}
%Another clarification regarding
In~\cite[Theorem 2]{MeshkatSullivantEisenberg}, 
it is claimed that an input-output equation can be obtained by 
dividing both sides of the input-output equation~\eqref{eq:i-o-for-M-general} by 
  the greatest common divisor (GCD) among the differential polynomials $\det (\partial I -A)$ and the $\det \left( \partial I-A \right)_{ji}$'s (for those $j \in In$).
 The resulting equation, however, is {\em not} always an input-output equation (see Example~\ref{ex:caution}).  
  Our contribution, therefore, is to correct the input-output equation
  stated in~\cite[Theorem 2]{MeshkatSullivantEisenberg}.
 Indeed, a proof of
Proposition~\ref{prop:i-o} is obtained by deleting the last two
sentences (which pertain to dividing by the GCD) of~\cite[proof of Theorem
  2]{MeshkatSullivantEisenberg}.
\end{rmk}

 Remark~\ref{rmk:gcd} motivates the following definition and subsequent question.

\begin{defn} \label{def:gcd}
Let $\mathcal M=(G,In,Out,Leak)$ be a linear compartmental model with compartmental matrix $A$.
Let $i \in Out$.
%Let $y_i$ be an output variable of $\mathcal M$ (i.e., $i \in Out$).
The {\em input-output GCD} of $y_i$ with respect to $\mathcal M$ is
the GCD of the following set of nonzero differential polynomials:
\[
\left( \{\det (\partial I -A) \} \cup \{ \det \left( \partial I-A \right)_{ji} \mid j \in In \} \right) \setminus \{0 \}~.
\]
\end{defn}

\begin{question} \label{q:gcd}
Consider the equation obtained by dividing 
both sides of the input-output equation~\eqref{eq:i-o-for-M-general} by the 
input-output GCD of $y_i$.
%GCD among the differential polynomials $\det (\partial I -A)$ and the $\det \left( \partial I-A \right)_{ji}$'s.
For which models is this equation 
an input-output equation?
\end{question}

An answer to Question~\ref{q:gcd} would help generate simpler
input-output equations than those in~\eqref{eq:i-o-for-M-general}.  
These simpler equations, in turn, may be used to assess whether a
model is identifiable (see Section~\ref{sec:identifiability-defn} below).  

Models for which the input-output GCD is 1 automatically satisfy Question~\ref{q:gcd}, so 
a partial answer to this question was given by 
Meshkat, Sullivant, and Eisenberg, who showed that the GCD 
%described in Remark~\ref{rmk:gcd}
%$g_i$ in~\eqref{eq:i-o-for-M-general} 
is 1 for strongly connected models with at least one leak and at least one input~\cite[Corollary 1]{MeshkatSullivantEisenberg}. 
One of our goals here is to generalize that result by removing the requirement of having leaks (Proposition~\ref{prop:i-o-for-strongly-connected} in the next section).
Moreover, even for models that are not strongly connected,
we make progress toward Question~\ref{q:gcd}
by showing that the ``downstream'' components correspond to factors of the GCD
(Theorem \ref{thm:ioscc}).
%will be cancelled in forming the input-output equations using Cramer's Rule.

\begin{eg} \label{ex:caution}
Consider the following model $\mathcal M$ for which $In=Out=\{2\}$:

\begin{center}
	\begin{tikzpicture}[scale=1.8]
% 2 compartments
 	\draw (8,0) circle (0.2);	
 	\draw (9,0) circle (0.2);	
% LABEL: numbers
    	\node[] at (8, 0) {1};
    	\node[] at (9, 0) {$2$};
% ARROWS
	 \draw[->] (8.3, 0.05) -- (8.7, 0.05);	
%	 \draw[<-] (4.3, -0.05) -- (4.7, -0.05);	
% RATES
   	 \node[] at (8.5, 0.2) {$a_{21}$};
%OUTPUT
 	\draw (8.67,-.49) circle (0.05);	
	 \draw[-] (9, -.2 ) -- (8.7, -.45);	
% Input	 	
	 \draw[<-] (9, .25) -- (8.7, .5);	
   	 \node[] at (9, 0.45) {in};	
\end{tikzpicture}
\end{center}
Following Proposition~\ref{prop:i-o},
the input-output equation of $\mathcal M$ is given by:
\begin{align} \label{eq:y2}
 	{\det (\partial I -A)} y_2 ~=~ 
		 (-1)^{2+2}  \det \left( \partial I-A \right)_{22} u_2 ~,
 \end{align}
where 
\[
	\partial I -A ~=~ 
		\begin{pmatrix}
		d/dt + a_{21} & 0 \\
		-a_{21} & d/dt 
		\end{pmatrix}~.
\]
Therefore, the input-output 
equation~\eqref{eq:y2} is
\begin{align} \label{eq:y2-again}
 d/dt (d/dt+a_{21}) y_2 = (d/dt+a_{21}) u_2~, 
\end{align} 
 that is, 
$y_2'' + a_{21}  y_2' =  u_2' + a_{21} u_2$.

If we divide both sides of the input-output equation
\eqref{eq:y2-again} by the GCD of $ d/dt (d/dt+a_{21})$ and $ (d/dt+a_{21})$, 
which is $ (d/dt+a_{21})$, we obtain $ y_2' = u_2$.  This equation is {\em not} an input-output equation of $\mathcal M$, because the ODEs of $\mathcal M$ satisfy $ y_2' = a_{21}x_1+u_2 \neq u_2$.
%{\color{red} To-do: Add in why this example fails.  Part of some motif?}
%{\color{red} To-do: Add in that Laplace Transform method fails here for generic initial conditions because the GCD is divided from both sides. But Laplace Transform method works here for zero initial conditions.}  {\color{violet}Not sure whether we need to add this.}
\end{eg}

\begin{eg}[Example~\ref{ex:ode}, continued] \label{ex:i-o-running-ex-with-gcd}
Returning to the model in Figure~\ref{fig:model}, with 
$In=Out=\{1,3\}$, we compute the input-output equation involving $y_1$ using Proposition \ref{prop:i-o}. 
We will see that, in this example, dividing by the input-output GCD 
does give an input-output equation even though this need not be the case in general (recall Remark~\ref{rmk:gcd}).
%gives us the same result as in Theorem \ref{thm:ioscc}.  
First, for $i=1$, the input-output equation~\eqref{eq:i-o-for-M-general} is
\begin{align} \label{eq:y1gcd}
 	{\det (\partial I -A)} y_1 ~=~ 
		 (-1)^{1+1} { \det \left( \partial I-A \right)_{11}} u_1 
		~+~
		 (-1)^{1+3} { \det \left( \partial I-A \right)_{31}} u_3 ~.
 \end{align}
Examining the compartmental matrix $A$, in equation~\eqref{eq:A-ex},
we see that in the $3 \times 3$ matrix $\left( \partial I-A \right)_{31}$, the upper-right $2 \times 2$ submatrix is the zero-matrix.  So, $ \det \left( \partial I-A \right)_{31}=0$.

Next,  $\left( \partial I-A \right)$ and 
$\left( \partial I-A \right)_{11}$ are block lower-triangular with lower block as follows:
	\begin{align*}
	B~:=~
		\begin{pmatrix}
		d/dt + a_{43} & -a_{34} \\
		-a_{43} & d/dt + a_{04} + a_{34}
		\end{pmatrix}~.
	\end{align*}
Moreover, it is straightforward to check that the GCD of $\det \left( \partial I-A \right)$ and 
$\det \left( \partial I-A \right)_{11}$ is $g_1=\det B$.  
Dividing both sides of the input-output equation~\eqref{eq:y1gcd} yields the following 
equation:
%thereby simplifies to the following:
\begin{equation} \label{eq:i-o-1-gcd}
y_1'' + (a_{01}+a_{21}+a_{12}+a_{32}) y_1' + (a_{01}a_{12}+a_{01}a_{32}+a_{21}a_{32})y_1 ~=~ u_1' + (a_{12}+a_{32})u_1~,
\end{equation}
and we can verify, e.g., using the software {\tt DAISY}~\cite{daisy}, that~\eqref{eq:i-o-1-gcd} is an input-output equation for the model.  Later, we will see that we can also obtain this equation via Theorem~\ref{thm:ioscc}, by restricting the model to those compartments that ``flow into'' compartment-1.  

For the other output, $i=3$, the input-output equation~\eqref{eq:i-o-for-M-general} is
\begin{align*}  %\label{eq:y3}
\det (\partial I -A) y_3 ~=~ 
		 (-1)^{3+1} \det \left( \partial I-A \right)_{13} u_1 
		~+~
		(-1)^{3+3} \det \left( \partial I-A \right)_{33} u_3 ~,
 \end{align*} 
which simplifies to the following input-output equation:
\begin{align} \label{eq:i-o-3}
	& y_3^{(4)} -(a_{01}+a_{21}+a_{12}+a_{32}+a_{43}+a_{04}+a_{34}) y_3^{(3)}
	\\ \notag
	& \quad 
	+(a_{01}a_{12}+a_{01}a_{32}+a_{21}a_{32}+a_{04}a_{43}+(a_{01}+a_{21}+a_{12}+a_{32})(a_{43}+a_{04}+a_{34})) y_3''
	\\ \notag
	& \quad 
	+((a_{01}a_{12}+a_{01}a_{32}+a_{21}a_{32})(a_{43}+a_{04}+a_{34})+a_{04}a_{43}(a_{01}+a_{21}+a_{12}+a_{32})y_3'
	\\ \notag
	& \quad 
	+ (a_{01}a_{12}+a_{01}a_{32}+a_{21}a_{32}) (a_{04}a_{43}) y_3 
		\\ \notag
	&  
	=~ (a_{21}a_{32}) u_1' + 
		a_{21}a_{32}(a_{04}+a_{34})u_1 \\ \notag
	& \quad	
		+ u_3^{(3)} + (a_{01}+a_{21}+a_{12}+a_{32}+a_{04}+a_{34})u_3''
	\\ \notag
	& \quad 
	+(a_{01}a_{12}+a_{01}a_{32}+a_{21}a_{32}+(a_{04}+a_{34})(a_{01}+a_{21}+a_{12}+a_{32}))u_3'
		\\ \notag
	& \quad 
	+(a_{01}a_{12}+a_{01}a_{32}+a_{21}a_{32})(a_{04}+a_{34})u_3~.
\end{align}

\end{eg}

\subsection{Identifiability} \label{sec:identifiability-defn}
A model is 
\textit{generically structurally identifiable} if from a
%\underline{generic} 
generic 
choice of both the inputs
and initial conditions, the parameters of the model can be recovered 
from exact measurements of both the inputs and the
outputs~\cite{Bellman,global-id}. 
Next, we give another standard definition of identifiability, through input-output equations in a
characteristic set (Definition~\ref{defn:identifygeneral}).  

The
intuition behind this definition is as follows.  Consider, for example, the input-output
equation~\eqref{eq:i-o-3}.  If we have many (perfect) measurements of
$y_3, y_3', \dots, y_3^{(4)}$, $u_1, u_1'$, $u_3, u_3',\dots, u_3^{(3)}$, then
solving for the coefficients in the equation, such as
$(a_{01}+a_{21}+a_{12}+a_{32}+a_{43}+a_{04}+a_{34})$, means
solving a linear system.  (So that there is a unique solution, we must avoid 
``structural'' linear dependencies among these measurements; this is
accomplished by using a characteristic set, and a related issue will be discussed in
Remark~\ref{rmk:gleb}.)  Identifiability, therefore, refers to when
the map from the parameters to the coefficients is one-to-one.  

In the following definition, a ``differential monomial term''
is a term of the form $\Phi_i(u,y)$ in
an input-output equations as in~\eqref{eq:general-i-o}.

\begin{defn}\label{defn:identifygeneral}
Let $\mathcal{M}$ be a model, as in \eqref{eq:general-model}, with $P$
parameters. 
Let $\Sigma$ be the set of input-output equations from the characteristic set\footnote{For concreteness, we choose the characteristic set arising from the ranking used by {\tt DAISY}~\cite{daisy}.}
 for $\mathcal M$.  
The  \textit{coefficient map} of $\Sigma$
is the function 
$c:  \R^{P}  \rightarrow \R^{k}$
that is the vector of all non-monic coefficient functions of every 
differential monomial term in every input-output equation in $\Sigma$
(here $k$ is the total number of non-monic coefficients).
Then
$\mathcal{M}$ is:
% linear compartmental model  $(G, In, Out, Leak)$ is: 
\begin{enumerate}
	\item \textit{globally identifiable} if $c$ is one-to-one, and is \textit{generically globally identifiable} if $c$ is one-to-one outside a set of measure zero.
	\item {\textit{locally identifiable} if
		around every point in $\R^{P}$ there is an open neighborhood $\mathcal U$ such that 
		 $c : \mathcal U  \rightarrow \R^k$  is one-to-one, and is \textit{generically locally identifiable} if, 
		 outside a set of measure zero, every point in $\R^{P}$ has such an open 
		 neighborhood $\mathcal U$.}
\item \textit{unidentifiable} if $\mathcal M$ is infinite-to-one.
\end{enumerate}
\end{defn}

\begin{eg}[Example~\ref{ex:caution}, continued] \label{ex:caution-continued}
Recall that the model $\mathcal M$ in 
Example~\ref{ex:caution} has input-output equation 
$ y_2'' + a_{21}  y' =  u_2' + a_{21} u_2$, 
comes from a characteristic set, as verified using {\tt DAISY} \cite{daisy}, 
so the coefficient map $c:
\mathbb{R} \to \mathbb{R}^2$ is given by $a_{21} \mapsto (a_{21},a_{21})$.
Hence, $\mathcal M$ is globally identifiable.  
\end{eg}

The following result, which is \cite[Proposition 2]{MeshkatSullivantEisenberg}, is a criterion for identifiability:

\begin{prop} [Meshkat, Sullivant, and Eisenberg] \label{prop:jacobian}
A coefficient map $c: \mathbb{R}^{|E|+|Leak|} \to \mathbb{R}^k$ of a 
linear compartmental model $(G, In,$ $Out, Leak)$ 
is locally one-to-one (that is, outside a set of measure zero, every point in $\mathbb{R}^{|E|+|Leak|} $ has an open neighborhood $\mathcal U$ such that $c:\mathcal U \to \mathbb{R}^k$ is one-to-one)
%generically locally
%identifiable 
if and only if 
the Jacobian matrix of $c$,
when evaluated at a generic point, 
has rank equal to $|E| + |Leak|$. 
\end{prop}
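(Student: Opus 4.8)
The plan is to deduce this from the standard fact that a polynomial map $\mathbb{R}^{N}\to\mathbb{R}^{k}$ is generically locally injective if and only if its Jacobian has generic rank $N$; essentially all the work is in translating that fact into the language of Definition~\ref{defn:identifygeneral}. First I would record that, after normalizing the input-output equations in the characteristic set to be monic, the non-monic coefficients are polynomial functions of the parameters $p=(a_{ij},a_{0i})$: this is visible from the determinantal formula in Proposition~\ref{prop:i-o} (the top term $y_i^{(n)}$ already has coefficient $1$, since $\det(\partial I-A)$ is monic of degree $n$ in $d/dt$) and from the worked examples in Section~\ref{subsec:i-o}. Hence the coefficient map is a genuine polynomial map $c:\mathbb{R}^{P}\to\mathbb{R}^{k}$ with $P=|E|+|Leak|$. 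Let $r$ be the maximal rank attained by the Jacobian $Dc(p)$ over $p\in\mathbb{R}^{P}$. Since the locus where some $r\times r$ minor of $Dc$ is nonzero is Zariski-open and dense, the set $U:=\{p:\operatorname{rank}Dc(p)=r\}$ has full measure (its complement lies in a proper algebraic subset). So it suffices to prove that $c$ is ``locally one-to-one'' in the sense of the proposition exactly when $r=P$.

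For the ``if'' direction, suppose $r=P$. Then for every $p\in U$ the derivative $Dc(p):\mathbb{R}^{P}\to\mathbb{R}^{k}$ is injective, so by the constant rank (or immersion) theorem $c$ restricts to an embedding on some open neighborhood of $p$; in particular $c$ is injective there. As $U$ has full measure, $c$ is locally one-to-one.

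For the ``only if'' direction I argue by contraposition: suppose $r<P$. On the open set $U$ the map $c$ has constant rank $r$, so by the constant rank theorem every $p\in U$ has an open neighborhood $\mathcal V\subseteq U$ on which, after analytic changes of coordinates in source and target, $c$ becomes the linear projection $(x_{1},\dots,x_{P})\mapsto(x_{1},\dots,x_{r},0,\dots,0)$. Consequently the fiber of $c$ through $p$ meets every neighborhood of $p$ in an open piece of a $(P-r)$-dimensional submanifold, which is infinite because $P-r\ge 1$. Thus no neighborhood of any point of $U$ is mapped injectively by $c$; since $U$ has full measure, $c$ is not locally one-to-one (indeed $c$ is infinite-to-one on $U$, matching part (3) of Definition~\ref{defn:identifygeneral}). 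Equivalently one could invoke the fiber-dimension theorem: the generic fiber of the polynomial map $c$ has dimension $P-r$, which is zero iff $r=P$.

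The ingredients here — semicontinuity of matrix rank, the inverse/immersion theorem, and the constant rank theorem — are classical, and the statement is in any case already attributed to~\cite{MeshkatSullivantEisenberg}. The points that actually need care are the bookkeeping with the measure-zero exceptional sets appearing both in the hypothesis of Proposition~\ref{prop:jacobian} and in Definition~\ref{defn:identifygeneral}, and the verification that the characteristic-set coefficients really are polynomial (or at least $C^{1}$) in $p$ so that the smooth-mapping machinery applies; this latter point is the main thing to pin down, and everything else is a direct application of the constant rank theorem.
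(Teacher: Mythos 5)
Your argument is correct, but note that the paper does not actually prove this statement: it is quoted verbatim as Proposition~2 of Meshkat--Sullivant--Eisenberg and used as a black box, so there is no in-paper proof to compare against. What you have written is essentially the standard justification underlying that cited result: the coefficients are polynomial in the parameters (for linear compartmental models this is visible from the determinantal form of the input-output equations, with the $y_i^{(n)}$-coefficient already monic), the locus where the Jacobian attains its maximal rank $r$ is the complement of a proper algebraic subset and hence has full measure, the immersion theorem gives local injectivity there when $r=|E|+|Leak|$, and the constant rank theorem gives positive-dimensional local fibers (hence failure of injectivity on every neighborhood of every point of a full-measure set) when $r<|E|+|Leak|$. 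The measure-zero bookkeeping in both directions is handled correctly, and your closing caveat --- that the only substantive point to verify is polynomiality (or at least smoothness on a full-measure set) of the characteristic-set coefficients --- is exactly the right thing to flag; for the models considered here it holds, so the proof is complete.
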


In the next section, we prove new results on the form of the
input-output equations and also clarify some of the subtleties that
arise when finding the input-output equations (such as the GCD
mentioned above).  
We also give another definition of identifiability for
  linear compartmental models.
Then in Section \ref{sec:leak}, we prove new results on when adding or removing leaks preserves identifiability.  Finally, in Section \ref{sec:ex}, we demonstrate some examples of the effects of adding or removing inputs, outputs, leaks, or edges.

\section{Results on input-output equations and identifiability} \label{sec:i-o}
In this section, we prove results that relate input-output equations of a model to those of certain submodels (Section~\ref{sec:i-o-submodel}) 
and then investigate how input-output GCDs (from Definition~\ref{def:gcd}) are related to the model's strong components (Section~\ref{sec:i-o-gcd}).

%--------------------------
% SUBSECTION: I-O AND SUBMODELS
%--------------------------
\subsection{Input-output equations, submodels, and identifiability} \label{sec:i-o-submodel}

Our main result, 
Theorem~\ref{thm:ioscc}, implies that, under certain hypotheses, there is an input-output equation involving an output variable 
$y_i$, that corresponds to the input-output equation arising from $y_i$'s \textit{output-reachable} subgraph (see Definition~\ref{defn:outputreachable}).
%strongly connected component containing $y_i$. 
%To state that result, w
We must first explain how to restrict a model $\mathcal M$ to such a subgraph.

\begin{defn}  \label{def:restrict} 
For a linear compartmental model $\mathcal{M}=(G, In, Out, Leak)$, 
let $H=(V_H,E_H)$ be an induced subgraph of $G$ that contains at least one output.
% don't need to require an input 
The {\em restriction} of $\mathcal M$ to $H$, denoted by $\mathcal{M}_H$,
is obtained from $\mathcal M$ by removing all incoming edges to $H$, retaining all leaks and outgoing edges (which become leaks), and retaining all inputs and outputs in $H$; that is,
	\[
	\mathcal{M}_H ~:=~
	(H,~ In_H,~Out_H,~Leak_H)~,
	\]  
where the input and output sets are 
$In_H:=In \cap V_H$ and 
$Out_H:=Out \cap V_H$, and the leak set is
\[
Leak_H~:=~ \left( Leak \cap V_H \right) \cup 
	\{
	i \in V_H \mid (i,j) \in E(G) ~{\rm for~some}~ j \notin V_H
	\}~.
\]
Additionally, the labels of edges in $H$ are inherited from those of $G$, and labels of leaks are as follows:
\[
	{\rm  label~of~leak~from~}k^{\rm th} {\rm~compartment}~=~ 
	\begin{cases}
		a_{0k} + \sum_{ \{ j \notin V_H \mid (k,j) \in E(G) \}} a_{jk} 
			 & {\rm if~ }k \in Leak \cap V_H\\
		 \sum_{ \{ j \notin V_H \mid (k,j) \in E(G) \}} a_{jk} 
			 & {\rm if~ }k \notin Leak \cap V_H~.
	 	\end{cases}
\]
\end{defn}

\begin{rmk}
A restriction $\mathcal M_H$ is a linear compartmental model, together with leak-labels which may be sums of parameters.  So, $\mathcal M_H$ may have 
more than  $|E_H|+|Leak_H|$ parameters.
\end{rmk}

\begin{eg}[Example~\ref{ex:i-o-running-ex-with-gcd}, continued] \label{eg:scc} 
Returning to the linear compartmental model $\mathcal M$ from Figure~\ref{fig:model}, 
the restriction to the strong component containing compartment-1 and the strong component containing compartment-3 are, respectively, as follows:
% $y_1$ is:

\begin{center}
	\begin{tikzpicture}[scale=1.8]
%-----------------------------
% Modified Catenary 
%-----------------------------
 	\draw (0,0) circle (0.2);	
 	\draw (1,0) circle (0.2);	
 	\draw (4,0) circle (0.2);	
 	\draw (5,0) circle (0.2);	
% LABEL: numbers
    	\node[] at (0, 0) {1};
    	\node[] at (1, 0) {$2$};
    	\node[] at (4, 0) {$3$};
    	\node[] at (5, 0) {$4$};
% LABEL: "and"
%	\node[] at (2.8,0) {and};
% ARROWS
	 \draw[<-] (0.3, 0.05) -- (0.7, 0.05);	
	 \draw[->] (0.3, -0.05) -- (0.7, -0.05);	
%	 \draw[<-] (1.3, 0.05) -- (1.7, 0.05);	
	 \draw[->] (1.3, -0.05) -- (1.7, -0.05);	
	 \draw[<-] (4.3, 0.05) -- (4.7, 0.05);	
	 \draw[->] (4.3, -0.05) -- (4.7, -0.05);	
%	 \draw[<-] (3.3, 0.05) -- (3.7, 0.05);	
	 \draw[->] (5.3, -0.05) -- (5.7, -0.05);	
% RATES
   	 \node[] at (0.5, 0.2) {$a_{12}$};
%   	 \node[] at (1.5, 0.2) {$a_{23}$};
   	 \node[] at (4.5, 0.2) {$a_{34}$};
%   	 \node[] at (3.5, 0.2) {$a_{04}$};
   	 \node[] at (0.5, -0.2) {$a_{21}$};
   	 \node[] at (1.5, -0.2) {$a_{32}$};
   	 \node[] at (4.5, -0.2) {$a_{43}$};
   	 \node[] at (5.5, -0.2) {$a_{04}$};
%OUTPUT
 	\draw (-0.33,-.49) circle (0.05);	
	 \draw[-] (0, -.2 ) -- (-0.3, -.45);	
 	\draw (3.67,-.49) circle (0.05);	
	 \draw[-] (4, -.2 ) -- (3.7, -.45);	
% Input
	 \draw[->] (-0.7, 0) -- (-0.3, 0);	
   	 \node[] at (-.9, 0) {in};
	 \draw[<-] (4, .25) -- (3.7, .5);	
   	 \node[] at (4, 0.45) {in};
% Leak
	 \draw[->] (0, .25) -- (-0.3, .5);	
   	 \node[] at (0, 0.45) {$a_{01}$};
% LABEL: Model name
	\end{tikzpicture}
\end{center}

Consider again the input-output equations of $\mathcal M$ for the two outputs, $y_1$ and $y_3$,
which were given in equations~\eqref{eq:i-o-1-gcd} and~\eqref{eq:i-o-3}, respectively. 
%The input-output equation for $y_1$ 
Equation~\eqref{eq:i-o-1-gcd} 
is precisely the input-output equation 
 for the model of the restriction above on the left, while 
 %that for $y_3$ 
equation~\eqref{eq:i-o-3}
 involves parameters in the full model and does {\em not} arise from the restriction on the right.  
The reason for this difference, explained below in Theorem~\ref{thm:ioscc}, 
is that the model on the left 
%``inflows into'' 
is ``upstream'' of 
the model on the right, but not vice-versa.
\end{eg}

\begin{defn} \label{defn:outputreachable}
For a linear compartmental model $\mathcal{M}=(G, In, Out, Leak)$, let $i \in Out$.
The {\em output-reachable subgraph to} $i$ (or {\em to} $y_i$) 
%-- or, for short, the {\em $i$-reachable subgraph} --
is
the induced subgraph of $G$ 
containing all vertices $j$ for which there is a directed path in $G$ from $j$ to $i$.
\end{defn}

\begin{eg}[Example~\ref{eg:scc}, continued] \label{eg:outputreachable} 
Returning to the model in Figure~\ref{fig:model}, 
%the restriction to
the output-reachable subgraph to $y_1$ is induced by the vertices 1 and 2, so the resulting 
 model of the restriction is the one depicted on the left-hand side in Example \ref{eg:scc}.
On the other hand, the output-reachable subgraph to $y_3$ is induced by all 4 vertices;
 so, the model of the restriction is the original model in Figure~\ref{fig:model}. 
\end{eg}
\begin{rmk} [Output-reachable subgraphs and structural observability]\label{rmk:outputconnectable} A linear compartmental model is \textit{output connectable} \cite{GodfreyChapman} if every compartment has a directed path leading from it to an output %observed 
compartment.  In control theory, a linear compartmental model is \textit{structurally observable} if every state variable $x_i(t)$ can be determined from the inputs $u_j(t)$ and the outputs $y_k(t)$ in some finite (but unspecified) time \cite{distefano-book}. 
A linear compartmental model is structurally observable if and only if it is 
output connectable~\cite{GodfreyChapman}. Thus output-reachable subgraphs %to $y_i$ is 
are structurally observable. The model from Figure~\ref{fig:model} is thus structurally observable.  
\end{rmk}

The following lemma states that an input-output equation of a model of a restriction $\mathcal{M}_H$ is an input-output equation for the full model $\mathcal M$ as long as there are no edges from outside of $H$ into $H$.

\begin{lemma} \label{lem:subgraph} 
For a linear compartmental model 
 $\mathcal M=(G,In, Out,Leak)$,
 let $H=(V_H,E_H)$ be an %nonempty 
 induced subgraph of $G$ such that there is no directed
edge $i \to j$ 
in G with $i\notin  V_H$ and $j \in V_H$.
 Then every input-output equation of $\mathcal{M}_H$ is an input-output equation of $\mathcal M$.
\end{lemma}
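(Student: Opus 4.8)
The plan is to show that every solution of the ODE system $x'(t) = Ax(t) + u(t)$ for the full model $\mathcal M$, when projected onto the compartments in $V_H$, satisfies the ODE system of the restricted model $\mathcal M_H$, so that any equation that holds along all solutions of $\mathcal M_H$ (in particular any input-output equation of $\mathcal M_H$) also holds along all solutions of $\mathcal M$. The key observation is that, because no edge $i \to j$ of $G$ has $i \notin V_H$ and $j \in V_H$, the $V_H$-block of the compartmental matrix $A$ only ``sees'' compartments inside $V_H$. Concretely, order the vertices so that the compartments of $V_H$ come first; then $A$ is block lower-triangular, $A = \begin{pmatrix} A_H' & 0 \\ * & * \end{pmatrix}$, where $A_H'$ is the $|V_H| \times |V_H|$ principal submatrix of $A$ indexed by $V_H$. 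The first step is to verify that $A_H'$ is exactly the compartmental matrix of $\mathcal M_H$: the off-diagonal entries agree because edge labels are inherited, and the diagonal entries agree precisely because of the leak-relabelling in Definition~\ref{def:restrict} — a diagonal entry $A_{kk} = -a_{0k} - \sum_{k \to \ell \in E} a_{\ell k}$ for the full model splits, for $k \in V_H$, into the part $-\sum_{\ell \in V_H}$ coming from edges staying in $H$ and the part $-(a_{0k} + \sum_{\ell \notin V_H} a_{\ell k})$, which is exactly (minus) the relabelled leak of $\mathcal M_H$.

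The second step is the dynamical one. Writing $x(t) = (x_H(t), x_{H^c}(t))$ in block form and using $A = \begin{pmatrix} A_H' & 0 \\ B & C \end{pmatrix}$, the top block of $x' = Ax + u$ reads $x_H'(t) = A_H' x_H(t) + u_H(t)$, where $u_H$ is $u$ restricted to the $V_H$-coordinates; and since $u_i \equiv 0$ for $i \notin In$, this $u_H$ is exactly the input vector of $\mathcal M_H$ (whose inputs are $In \cap V_H$). The output equations $y_i = x_i$ for $i \in Out \cap V_H$ are identical in the two models. Hence $(x_H(t), u_H(t), y(t))$ is a solution trajectory of $\mathcal M_H$. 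Now if $\sum_i c_i(p)\,\Phi_i(u,y) = 0$ is an input-output equation of $\mathcal M_H$ — meaning it holds along every solution trajectory of $\mathcal M_H$, using only the variables $u_j$ for $j \in In_H$, $y_i$ for $i \in Out_H$, and their derivatives — then it holds along $(x_H, u_H, y)$, i.e.\ along every solution of $\mathcal M$. Since the variables appearing are among the inputs and outputs of $\mathcal M$ (because $In_H \subseteq In$ and $Out_H \subseteq Out$), this equation is by definition an input-output equation of $\mathcal M$.

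A couple of small points need care. First, one should note that $\mathcal M_H$ may carry extra parameters (the relabelled leaks are sums of parameters of $\mathcal M$, as noted in the Remark after Definition~\ref{def:restrict}); but these sums are simply particular polynomial functions $c_i$ of the parameters $p$ of $\mathcal M$, so the input-output equation of $\mathcal M_H$ is still a valid equation in terms of $p$, $u$, $y$, and derivatives for $\mathcal M$. Second, one must make sure the notion of ``input-output equation'' being transferred is the intrinsic one — an equation in $u$, $y$, and their derivatives that vanishes on all solution trajectories — rather than one tied to a specific characteristic set; Lemma~\ref{lem:subgraph} is naturally a statement about this intrinsic notion, and that is all that is needed downstream.

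I expect the main (though modest) obstacle to be the bookkeeping in the first step: checking that the principal submatrix $A_H'$ really coincides with the compartmental matrix of $\mathcal M_H$, diagonal entries included, which is where the hypothesis ``no edge into $V_H$ from outside'' and the leak-relabelling convention both get used. Once that identification is in place, the rest is the routine block-triangular computation sketched above.
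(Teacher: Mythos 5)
Your proposal is correct and follows essentially the same route as the paper's proof: the paper observes that, since no edges enter $H$ from outside, the ODEs of $\mathcal M$ are just those of $\mathcal M_H$ with the equations for the compartments outside $V_H$ appended, so any equation holding along solutions of $\mathcal M_H$ holds along solutions of $\mathcal M$. Your block-triangular computation and the check that the principal submatrix agrees with the compartmental matrix of $\mathcal M_H$ (via the leak relabelling) simply make explicit what the paper compresses into ``this follows from how restrictions are constructed in Definition~\ref{def:restrict}.''
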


\begin{proof}
There are no directed edges from outside of $H$ into $H$, so the ODEs of $\mathcal M$ are obtained from those of $\mathcal{M}_H$ simply by appending the ODEs for each 
state variable $x_i(t)$ with 
$i \notin V_H$ (this follows from how restrictions are constructed in Definition~\ref{def:restrict}).  Accordingly, any input-output equation of $\mathcal{M}_H$, that is, any equation involving only the input and output variables (and their derivatives) and parameters in $\mathcal{M}_H$ that hold along solutions to the ODEs of $\mathcal{M}_H$, also holds along solutions to the ODEs of $\mathcal{M}$ -- and therefore is also an input-output equation of $\mathcal M$.
\end{proof}

The main result of this section, Theorem~\ref{thm:ioscc},
 states that to obtain an input-output equation involving an output variable $y_i$, it suffices to consider the corresponding restriction $\mathcal{M}_H$ arising from the output-reachable subgraph $H$ to $y_i$.
% on the form of the input-output equations for models that are not necessarily strongly connected, and thus we generalize Theorem \ref{thm:i-o-for-strongly-connected}. 

% I-0 EQNS COMING FROM COMPONENTS
\begin{theorem} \label{thm:ioscc} 
Let $\mathcal{M}=(G, In, Out, Leak)$
be a linear compartmental model with at least one input.
Let $i \in Out$, 
and assume that there exists a directed path from some input compartment 
to compartment-$i$. 
Let $ H$ denote the output-reachable subgraph to $y_i$, 
and let $A_H$ denote the compartmental matrix for the restriction $\mathcal{M}_H$.  
Then the following is an input-output equation for $\mathcal M$ involving $y_i$:
 \begin{align}  \label{eq:i-o-for-general-model}
 	\det (\partial I -{A}_H) y_i ~=~  (-1)^{i+j} \sum_{j \in In \cap V_H} \det \left( \partial I-{A}_H \right)_{ji} u_j ~,
	%	\quad \quad {\rm for~} i \in Out~.
 \end{align}
where $ \left( \partial I-{A}_H \right)_{ji}$ denotes the matrix obtained from
 $\left( \partial I-{A}_H \right)$ by removing the row corresponding to compartment-$j$ and the column corresponding to compartment-$i$.
Thus, this input-output equation~\eqref{eq:i-o-for-general-model}
involves only the output-reachable subgraph to $y_i$.
\end{theorem}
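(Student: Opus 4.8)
The plan is to obtain Theorem~\ref{thm:ioscc} by applying Proposition~\ref{prop:i-o} to the restriction $\mathcal M_H$ and then transporting the resulting equation to $\mathcal M$ via Lemma~\ref{lem:subgraph}. So the argument breaks into three checks: (i) the output-reachable subgraph $H$ has no edges entering it, which is exactly what Lemma~\ref{lem:subgraph} requires; (ii) the restriction $\mathcal M_H$ satisfies the hypotheses of Proposition~\ref{prop:i-o} — in particular it has at least one input, which is precisely where the hypothesis that some input compartment reaches compartment-$i$ gets used; and (iii) the equation Proposition~\ref{prop:i-o} produces for $\mathcal M_H$ is~\eqref{eq:i-o-for-general-model}.

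First I would verify (i). If $k \to j$ is an edge of $G$ with $j \in V_H$, then by definition of the output-reachable subgraph there is a directed path in $G$ from $j$ to $i$; prepending the edge $k \to j$ produces a directed path from $k$ to $i$, so $k \in V_H$. Hence there is no directed edge of $G$ from a vertex outside $V_H$ into a vertex of $V_H$, so Lemma~\ref{lem:subgraph} applies to $H$: every input-output equation of $\mathcal M_H$ is an input-output equation of $\mathcal M$. It therefore suffices to show that~\eqref{eq:i-o-for-general-model} is an input-output equation of $\mathcal M_H$.

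For (ii) and (iii): the restriction $\mathcal M_H$ is a linear compartmental model with $|V_H|$ compartments whose compartmental matrix is $A_H$ by definition (its leak-labels may be sums of the $a_{\bullet \bullet}$, but this is harmless, since the proof of Proposition~\ref{prop:i-o} uses only the ODE form $x' = A_H x + u$, $y = x$, via Cramer's rule). The hypothesis gives an input compartment $j_0$ with a directed path to $i$, so $j_0 \in In \cap V_H = In_H$ and $\mathcal M_H$ has at least one input — exactly the requirement flagged in Remark~\ref{rmk:i-o}. Also $i$ itself lies in $V_H$ (via the trivial path from $i$ to itself) and $i \in Out$, so $i \in Out_H$. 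Thus Proposition~\ref{prop:i-o}, applied to $\mathcal M_H$ with the output $i$, yields an input-output equation of $\mathcal M_H$ of the form~\eqref{eq:i-o-for-general-model}; and since $\det(\partial I - A_H)$ is monic of degree $|V_H| \ge 1$ in $d/dt$, this equation genuinely involves $y_i$, while visibly only the subgraph $H$ enters it. Combined with step (i), this proves the theorem.

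The one place I expect to need care — and the step I would check most carefully — is the bookkeeping of compartment labels and cofactor signs in (iii): Proposition~\ref{prop:i-o} is phrased for a model whose compartments are labeled $1, \dots, n$, whereas $V_H$ is an arbitrary subset of $\{1, \dots, n\}$, so the minors $(\partial I - A_H)_{ji}$ and the signs $(-1)^{i+j}$ in~\eqref{eq:i-o-for-general-model} must be read relative to the vertex set of $H$. This can be handled either by relabeling the vertices of $H$ as $1, 2, \dots, |V_H|$ before invoking Proposition~\ref{prop:i-o}, or by noting that $\partial I - A$ is block lower triangular with the $V_H$-block in the upper-left corner (again because no edge enters $V_H$, and the diagonal leak-accounting in Definition~\ref{def:restrict} makes that block equal to $\partial I - A_H$) and extracting that factor directly. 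Neither route is deep, but both require attention to signs; otherwise the proof is essentially "apply Lemma~\ref{lem:subgraph} to the output-reachable subgraph, whose restriction still has an input."
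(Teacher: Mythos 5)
Your proposal is correct and follows essentially the same route as the paper: the paper's proof is exactly the observation that $In_H = In \cap V_H$ plus an immediate application of Lemma~\ref{lem:subgraph} and Proposition~\ref{prop:i-o} to the restriction $\mathcal{M}_H$. Your additional checks (no edges enter the output-reachable subgraph, the path hypothesis guaranteeing $\mathcal{M}_H$ has an input, and the labeling/sign bookkeeping) are just the details the paper leaves implicit.
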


\begin{proof} 
The set of input compartments of $\mathcal{M}_H$ is $In \cap V_H$ 
 (Definition~\ref{def:restrict}), 
 and now this result follows directly from 
Lemma~\ref{lem:subgraph} and Proposition~\ref{prop:i-o}.
\end{proof} 

Next, we use the the input-output equations given in
Theorem~\ref{thm:ioscc}, in place of those from a characteristic set, 
to give another definition of identifiability (for linear compartmental models). 
Much like in \cite{MeshkatSullivantEisenberg}, we call this notion
``identifiability from the coefficient map.''
%stick to this convention

\begin{defn}\label{defn:identify}
Let $\mathcal{M}=(G, In, Out, Leak)$ be a linear compartmental model.
Consider the coefficient map
$c:  \R^{|E| + |Leak|}  \rightarrow \R^{k}$
arising from the 
%formed by the vector of all non-monic coefficient functions of every of differential term of the form $y_i^{(j)}$ and $u_i^{(j)}$ in
the input-output equations in~\eqref{eq:i-o-for-general-model} 
(here $k$ is the total number of non-monic coefficients). 
Then
$\mathcal{M}$ is:
% linear compartmental model  $(G, In, Out, Leak)$ is: 
\begin{enumerate}
	\item \textit{globally identifiable from the coefficient map} %$c$
          if $c$ is one-to-one, and is 
          \textit{generically globally identifiable from the
            coefficient map} 
          if $c$ is one-to-one outside a set of measure zero.
	\item {\textit{locally identifiable from the coefficient map} if
		around every point in $\R^{|E| + |Leak|}$ there is an open neighborhood $\mathcal U$ such that 
		 $c : \mathcal U  \rightarrow \R^k$  is one-to-one,
                 and is \textit{generically locally identifiable from the coefficient map} if, 
		 outside a set of measure zero, every point in $\R^{|E| + |Leak|}$ has such an open 
		 neighborhood $\mathcal U$.}
\item \textit{unidentifiable from the coefficient map} if $\mathcal M$ is infinite-to-one.
\end{enumerate}
\end{defn}
\begin{rmk} \label{rmk:2-defns} 
In all examples we have seen, the two notions of identifiability,
Definitions~\ref{defn:identifygeneral} and~\ref{defn:identify}, are
the same. Indeed, we conjecture that the two definitions are equivalent.  
\end{rmk}

\begin{rmk} \label{rmk:gleb} 
Definition~\ref{defn:identify} is based on the input-output equations
from Theorem~\ref{thm:ioscc}.  Here we show that the choice of
input-output equations matters; the wrong choice can lead to erroneous
conclusions.  
%While Proposition 2.3 gives a formula for input-output equations, we
%remark that some caution should be exercised when choosing
%input-output equations to assess identifiability.  Namely, the
%coefficient map used for identifiability should be taken from
%input-output equations that align with model structure, which is not
%necessarily true for the input-output equations from Proposition 2.3
%(which is why the authors in \cite{MeshkatSullivantEisenberg} do not
%define identifiability in terms of these equations, but those where
%the GCD has been divided). 
As an example of what can go wrong, consider the following model,
which Gleb Pogudin and Peter Thompson brought to our attention:
\begin{center} % GLEB'S EXAMPLE
	\begin{tikzpicture}[scale=1.8]
	\draw (0,0) circle (0.2);	
 	\draw (1,0) circle (0.2);	
        \draw (2, 0) circle (0.2);
% LABEL: numbers
    	\node[] at (0, 0) {1};
    	\node[] at (1, 0) {$3$};
    	\node[] at (2, 0) {$2$};
%OUTPUT
 	\draw (-0.33,-.49) circle (0.05);	
	 \draw[-] (0, -.2 ) -- (-0.3, -.45);	
% Input
	 \draw[->] (-0.7, 0) -- (-0.3, 0);	
   	 \node[] at (-.9, 0) {in};
% Leak
% \draw[->] (0, .25) -- (-0.3, .5);	
%  \node[] at (0, 0.45) {$a_{01}$};
%  \draw[->] (1, .25) -- (0.7, .5);	
%  \node[] at (1, 0.45) {$a_{02}$};
% Edges
	 \draw[->] (0.3, 0) -- (0.7, 0);	
	 \draw[<-] (1.3, 0) -- (1.7, 0);	
% RATES
  	 \node[] at (0.5, -0.2) {$a_{31}$};
  	 \node[] at (1.5, -.2) {$a_{32}$};
	\end{tikzpicture}
\end{center}

%\begin{align}
%{x_1}'(t)=-a_{01}x_1+u_1(t) \\
%{x_2}'(t)=-a_{02}x_2\\
%y_1=x_1
%\end{align} 

The model is unidentifiable, as the output variable $y_1=x_1$ cannot
``see'' $a_{32}$.  Indeed, the model is unidentifiable from the
coefficient map, as seen from the following input-output equation from Theorem~\ref{thm:ioscc}:
\begin{align} \label{eq:i-o-1}
  y_1'+a_{31}y_1=u_1~.
\end{align}

On the other hand, if we instead consider the coefficient map from the
following input-output equation from Proposition~\ref{prop:i-o}:
 \begin{align} \label{eq:i-o-2}
 y_1^{(3)} +(a_{31} +a_{32})  y_1'' + a_{31}a_{32} y_1' =  u_1'' + a_{32} u_1'~,
 \end{align}
this coefficient map is one-to-one (although the model is unidentifiable).  We therefore can not use
input-output equations from Proposition~\ref{prop:i-o} to define
identifiability, and in general we must be cautious regarding which
input-output equations to analyze.  

Recall from Remark~\ref{rmk:gcd} that, in general, dividing an
input-output equation by the input-output GCD might not yield another
input-output equation.  Nevertheless, 
in this example, dividing the input-output equation~\eqref{eq:i-o-2} by the input-output GCD of $y_1$,
which is $d/dt(d/dt+a_{32})$,
yields an input-output equation, the one in~\eqref{eq:i-o-1}.  
So, in
this case, dividing by the input-output GCD yields the correct
input-output equation for assessing identifiability.
(In~\cite{MeshkatSullivantEisenberg}, identifiability is defined in terms of
such equations obtained by dividing by the GCDs.)

Finally, we can use this example to illustrate why the input-output
equations in Proposition~\ref{prop:i-o} are generally not
``reduced'' enough for assessing identifiablity.  The input-output
equation~\eqref{eq:i-o-1} gives a linear dependence among the 
variables
$y_1'$,
$y_1$, and $u_1$, so that (by taking derivatives) there is a linear
dependence among 
$y_1''$,
$y_1'$, and $u_1'$.
This dependence implies that the
coefficients in~\eqref{eq:i-o-2} can {\em not} be recovered from data.  
Indeed, we emphasize that we can always 
perform algebraic operations on ``reduced'' input-output equations to
obtain ``non-reduced''
input-output equations (which should not be used for assessing
identifiability, to avoid erroneous results).  For instance,
by scaling $y_1'+a_{31}y_1=u_1$ by $a_{32}$, adding this
equation to the derivative of $y_1'+a_{31}y_1=u_1$, and then taking
the derivative, we obtain the input-output equation~\eqref{eq:i-o-2}.
\end{rmk}

\begin{rmk}[Output-reachable subgraphs and algebraic observability] A model (linear or nonlinear) is {\em algebraically observable} if every state variable can be recovered from observation of the input and output alone \cite{meshkat-rosen-sullivant}, a more general notion of the control theory concept of ``structural observability.''  
A model is algebraically observable if and only if the sum of the
differential orders of all input-output equations from a 
characteristic set
%reduced
%set\footnote{Here the particular reduced set of input-output equations
%  of interest is the \textit{characteristic set}, a triangular system
%  that generates the same dynamics as the original
%  system~\cite{glad}.} 
is equal to the number of state variables, 
$n$~\cite[Proposition 5]{glad}. 
Theorem \ref{thm:ioscc} verifies that, for the case of a single output $y_i$, 
the input-output equation~\eqref{eq:i-o-for-general-model}
% corresponding to $y_i$ 
for the restriction to the output-reachable subgraph $H$ to $y_i$ has differential order $n=|V(H)|$, and thus the restriction is algebraically observable.
 %Theorem \ref{thm:ioscc} verifies that the input-output equation for the output $y_i$ restricted to the output-reachable subgraph to $y_i$ has differential order {\color{magenta} $n=|V(H)|$} and thus each restriction to an output-reachable subgraph (for a single $y_i$) is algebraically observable.
\end{rmk}

%\begin{rmk} 
%In two of our results, Proposition \ref{prop:i-o} and Theorem \ref{thm:ioscc},
% each input-output equation involves only a single output.  If instead we find the input-output equations using an alternative approach, such as using the \textit{characteristic set} as in DAISY \cite{daisy}, then some of the input-output equations may involve several outputs.  While this does not change the overall identifiability results (see Remark \ref{rmk:any-m-equations} {\color{red} To-do: Remove/change the reference to Remark \ref{rmk:any-m-equations}}), this can change the coefficients and thus possibly change the \textit{identifiable functions of parameters}.  For more on identifiable functions of parameters, see \cite{meshkat-rosen-sullivant}.
%\end{rmk}

\begin{eg}[Example~\ref{eg:scc}, continued] \label{ex:i-o-running-ex}
Returning to the model in Figure~\ref{fig:model}, with 
$In=Out=\{1,3\}$, we compute the input-output equations~\eqref{eq:i-o-for-general-model}. First, for $i=1$, we begin with:
\begin{align} \label{eq:y1}
\det (\partial I -{A_{H_1} }) y_1 ~=~ 
		 (-1)^{1+1} \det \left( \partial I-{A}_{H_1} \right)_{11} u_1 ~,
 	%\frac{\det (\partial I -A)}{g_1} y_1 ~=~ 
	%	 (-1)^{1+1} \frac{ \det \left( \partial I-A \right)_{11}}{g_1} u_1 
		%~+~
		 %(-1)^{1+3} \frac{ \det \left( \partial I-A \right)_{31}}{g_1} u_3 ~.
 \end{align}
where ${A}_{H_1}$ comes from the restriction to the output-reachable subgraph to $y_1$:
\begin{align*}
{A}_{H_1}~:=~
\begin{pmatrix}
	-a_{01} - a_{21} & a_{12} \\
		a_{21} & -a_{12} - a_{32}
		\end{pmatrix}~.
\end{align*}
The resulting input-output equation~\eqref{eq:y1} 
is exactly the one displayed earlier in~\eqref{eq:i-o-1-gcd}.

For the other output, $i=3$, %the GCD is $g_3=1$, giving
the output-reachable subgraph to $y_3$ is the full graph, and so 
the input-output equation~\eqref{eq:i-o-for-general-model} is 
the one given earlier in~\eqref{eq:i-o-3}.
%${A}_{H_2}$ is the entire matrix $A$, and thus we have:
\end{eg}

Theorem~\ref{thm:ioscc} allows us to prove two results on identifiability
 (Corollaries~\ref{cor:unidentifiable} and \ref{cor:obscomp}).

\begin{corollary} \label{cor:unidentifiable}
Let $\mathcal{M}=(G, In, Out, Leak)$ be a linear compartmental model
with at least one input 
%for which the observable component is a proper subgraph of $G$ (that is, 
such that there is a compartment $j$ such that 
(1) $j$ is {\em not} in any output-reachable subgraph of $G$ and
(2)  $j$ is a leak compartment or there is a directed edge
 $ j \to k$ %$(j,k)$ 
out of $j$. 
%Let $\mathcal{M}=(G, In, Out, Leak)$ be a linear compartmental model which has a component that is downstream to every $y_i$.  
Then $\mathcal{M}$ is unidentifiable from the coefficient map.
\end{corollary}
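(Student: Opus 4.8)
The plan is to apply Theorem~\ref{thm:ioscc} and observe that the parameter attached to $j$ by hypothesis (2) never appears in the coefficient map, so that map is constant along one coordinate direction and is therefore infinite-to-one.

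First I would recall that, by Definition~\ref{defn:identify}, the coefficient map $c \colon \R^{|E|+|Leak|} \to \R^{k}$ is assembled from the input-output equations~\eqref{eq:i-o-for-general-model}, one for each $i \in Out$. For each such $i$, let $H_i$ denote the output-reachable subgraph to $y_i$ and $A_{H_i}$ the compartmental matrix of the restriction $\mathcal{M}_{H_i}$. The key structural point --- which is exactly the content of Theorem~\ref{thm:ioscc} when there is a directed path from an input to $i$, and which holds trivially (the right-hand side being an empty sum, so the equation reads $\det(\partial I - A_{H_i})\,y_i = 0$) when there is no such path --- is that equation~\eqref{eq:i-o-for-general-model} for $y_i$ involves only the entries of $A_{H_i}$. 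By hypothesis (1), the compartment $j$ lies in no output-reachable subgraph, so $j \notin V_{H_i}$ for every $i \in Out$.

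Next I would verify that the parameter $\theta$ attached to $j$ by hypothesis (2) --- namely $\theta = a_{kj}$ for some edge $j \to k$, or $\theta = a_{0j}$ when $j$ is a leak --- occurs in none of the matrices $A_{H_i}$. In the full compartmental matrix $A$, the entry $a_{kj}$ occurs only in position $(k,j)$ and in the diagonal entry $A_{jj}$, and $a_{0j}$ occurs only in $A_{jj}$; since $j \notin V_{H_i}$, both row $j$ and column $j$ are absent from the submatrix of $A$ supported on $V_{H_i}$. It then remains to check that $\theta$ is not reintroduced through the modified leak labels of $\mathcal{M}_{H_i}$: by Definition~\ref{def:restrict}, all edges entering $H_i$ are deleted, and the leak label at a compartment $\ell \in V_{H_i}$ only acquires terms $a_{m\ell}$ coming from edges $\ell \to m$ with $m \notin V_{H_i}$ (plus $a_{0\ell}$ when $\ell \in Leak$); every such term has second index $\ell \in V_{H_i}$, hence is neither $a_{kj}$ nor $a_{0j}$, whose second index is $j \notin V_{H_i}$. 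So $\theta$ appears in no $A_{H_i}$, and therefore in none of the equations~\eqref{eq:i-o-for-general-model}.

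Consequently $c$ does not depend on the coordinate $\theta$: fixing all other parameters and letting $\theta$ vary over $\R$ yields a line that $c$ sends to a single point, so every fiber of $c$ contains a line and $c$ is infinite-to-one. By Definition~\ref{defn:identify}, $\mathcal{M}$ is unidentifiable from the coefficient map. The one place requiring care --- and the only real work --- is the bookkeeping in the previous paragraph: confirming that the rate constant of the outgoing edge or leak at $j$ genuinely disappears, in particular that it is not smuggled back in through the ``outgoing edges become leaks'' clause of the restriction. Once that is pinned down, the corollary is immediate from Theorem~\ref{thm:ioscc}.
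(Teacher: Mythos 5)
Your proposal is correct and follows essentially the same route as the paper: apply Theorem~\ref{thm:ioscc} to conclude that the parameter $a_{0j}$ or $a_{kj}$ attached to $j$ appears in none of the input-output equations~\eqref{eq:i-o-for-general-model}, so the coefficient map is independent of that coordinate and hence infinite-to-one. The paper's proof is just this observation stated in two sentences; your additional bookkeeping (checking that the restriction's leak labels cannot reintroduce the parameter, and handling outputs with no input path via the trivial equation) is a careful and harmless elaboration of the same argument.
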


\begin{proof}
For our model, 
Theorem~\ref{thm:ioscc} implies that 
the input-output equations
in~\eqref{eq:i-o-for-general-model},
do {\em not} involve 
the leak parameter $a_{0j}$ (if $j$ is a leak compartment) or the edge
parameter $a_{kj}$ (if $j \to k$ is an edge).
Hence, $\mathcal M$ is unidentifiable from the
  coefficient map.
\end{proof}

\begin{eg} \label{ex:uniden}
In the following model $\mathcal M$, compartment-2 is a leak compartment that
is not in the output-reachable subgraph to the (unique) output in compartment-1:
\begin{center}
	\begin{tikzpicture}[scale=1.8]
%-----------------------------
% 2 Leaks
%-----------------------------
 	\draw (8,0) circle (0.2);	
 	\draw (9,0) circle (0.2);	
% LABEL: numbers
    	\node[] at (8, 0) {1};
    	\node[] at (9, 0) {$2$};
% ARROWS
	 \draw[->] (8.3, 0.05) -- (8.7, 0.05);	
%	 \draw[<-] (4.3, -0.05) -- (4.7, -0.05);	
% RATES
   	 \node[] at (8.5, 0.2) {$a_{21}$};
%OUTPUT
 	\draw (7.67,-.49) circle (0.05);	
	 \draw[-] (8, -.2 ) -- (7.7, -.45);	
% Leak
	 \draw[->] (8, .25) -- (7.7, .5);	
   	 \node[] at (8, 0.45) {$a_{01}$};	
	 \draw[->] (9, .25) -- (8.7, .5);	
   	 \node[] at (9, 0.45) {$a_{02}$};	
	 \end{tikzpicture}
\end{center}
Hence, by Corollary~\ref{cor:unidentifiable}, $\mathcal M$ is
unidentifiable from the coefficient map.  
Indeed, the 
output-reachable subgraph is induced by compartment-1, so by Theorem~\ref{thm:ioscc},
yields the input-output equation
$ y_1' + (a_{01}+a_{21}) y_1 = 0$, which does not involve the leak parameter $a_{02}$.
\end{eg}

%\begin{defn} \label{defn:measurable} {\color{magenta} Let $y_i$ be an output of a model $M$.  A \textit{measurable component} is the submodel involving the strongly connected component containing $y_i$ and all other strongly connected components that are upstream from the strongly connected component containing $y_i$.}
%\end{defn}

The next result states that the ``observable component'' submodel (Definition~\ref{def:observable-comp}) of an identifiable model is always identifiable.
%gives a method to find identifiable submodels of an identifiable model.}

\begin{defn} \label{def:observable-comp}
For a linear compartmental model $\mathcal{M}$, %=(G, In, Out, Leak)
 the \textit{observable component} is the union of all output-reachable subgraphs to all outputs $y_i$ in the model.
\end{defn}

\begin{corollary} \label{cor:obscomp} 
Let $\mathcal{M}$ %=(G, In, Out, Leak)$
 be a linear compartmental model with at least one input. Let $H$ denote the observable component.
If $\mathcal M$ is generically globally (respectively, locally)
identifiable from the coefficient map, then so is $\mathcal{M}_H$.
\end{corollary}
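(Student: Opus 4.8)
The plan is to show that the coefficient map of $\mathcal M$ factors as $c_{\mathcal M}=c_{\mathcal M_H}\circ\pi$ for a surjective linear reparametrization $\pi$, and then to transfer generic injectivity across $\pi$. Two bookkeeping facts about restrictions are needed. Write $H_i$ for the output-reachable subgraph to $y_i$ in $G$; by Definition~\ref{def:observable-comp} we have $H_i\subseteq H$, and in particular $Out\cap V_H=Out$. \emph{(a)} The output-reachable subgraph to $y_i$ computed inside $H$ equals $H_i$, since $H$ is an induced subgraph and every directed path of $G$ ending at $i$ already lies in $H_i$. \emph{(b)} $(\mathcal M_H)_{H_i}=\mathcal M_{H_i}$: the edge sets plainly agree, and the leak labels agree by the piecewise formula of Definition~\ref{def:restrict}, checked in the three cases $k\in Leak$, $k\notin Leak$ with $k$ having an edge of $G$ leaving $V_H$, and $k\notin Leak$ with all out-edges inside $V_H$; the point is that the extra out-edges of $k$ that become leak contributions when one restricts $\mathcal M$ to $H_i$ directly are exactly the edges of $\mathcal M_H$ from $k$ into $V_H\setminus V_{H_i}$.

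Next, the factorization. By Theorem~\ref{thm:ioscc}, the input-output equation~\eqref{eq:i-o-for-general-model} of $\mathcal M$ for $y_i$ is built from $A_{H_i}$, the compartmental matrix of $\mathcal M_{H_i}$. For $k\in V_H$ set $\lambda_k:=[k\in Leak]\,a_{0k}+\sum_{m\notin V_H,\ k\to m\in E}a_{mk}$, the leak rate at $k$ in $\mathcal M_H$. Every entry of $A_{H_i}$ is a $\Z$-linear function of the edge rates $\{a_{kl}:l\to k\in E,\ k,l\in V_H\}$ of $\mathcal M_H$ together with the $\lambda_k$: the off-diagonal entries are such edge rates or $0$, and the diagonal entry at $k$ equals $-\lambda_k-\sum_{m\in V_H,\ k\to m\in E}a_{mk}$. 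Hence the coefficients of~\eqref{eq:i-o-for-general-model}, being determinants of submatrices of $\partial I-A_{H_i}$, are polynomials in the parameters of $\mathcal M_H$; by \emph{(a)} and \emph{(b)} these polynomials are precisely the coefficient functions defining $c_{\mathcal M_H}$. Letting $\pi:\R^{|E|+|Leak|}\to\R^{|E_H|+|Leak_H|}$ be the linear map sending the parameter vector of $\mathcal M$ to the tuple of edge rates and leak rates $\lambda_k$ of $\mathcal M_H$, we obtain $c_{\mathcal M}=c_{\mathcal M_H}\circ\pi$. Moreover $\pi$ is surjective: the edge rates of $\mathcal M_H$ are coordinates on $\R^{|E|+|Leak|}$, and each $\lambda_k$ with $k\in Leak_H$ is a sum of one or more distinct coordinates, the various $\lambda_k$ and the edge rates of $\mathcal M_H$ involving pairwise disjoint sets of coordinates.

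Finally, the transfer. If $\pi$ is not injective then $c_{\mathcal M}=c_{\mathcal M_H}\circ\pi$ is constant on the cosets of the positive-dimensional subspace $\ker\pi$, hence infinite-to-one at every point; then $\mathcal M$ is unidentifiable from the coefficient map, so the hypothesis of the corollary fails and there is nothing to prove. If $\pi$ is injective then, being also surjective, it is a linear isomorphism and $c_{\mathcal M_H}=c_{\mathcal M}\circ\pi^{-1}$. A linear isomorphism is a bijection carrying sets of measure zero to sets of measure zero, so generic global injectivity of $c_{\mathcal M}$ passes to $c_{\mathcal M_H}$; for the local case, if $c_{\mathcal M}$ is injective on a neighborhood $\mathcal U$ of a point $x$ lying outside a fixed null set, then $c_{\mathcal M_H}$ is injective on the neighborhood $\pi(\mathcal U)$ of $\pi(x)$, and $\pi$ carries the null set to a null set. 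This proves the corollary.

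The step I expect to be the main obstacle is fact \emph{(b)}: unwinding the piecewise leak-label definition in Definition~\ref{def:restrict} to confirm that restricting $\mathcal M$ to $H$ and then to $H_i$ reproduces $\mathcal M_{H_i}$ on the nose, leak labels included. Once that identity and fact \emph{(a)} are established, the factorization $c_{\mathcal M}=c_{\mathcal M_H}\circ\pi$ and the transfer of generic injectivity are routine.
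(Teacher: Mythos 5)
Your proof is correct and takes essentially the same route as the paper's: both rest on the observations that the output-reachable subgraph to each $y_i$ is the same computed in $G$ or in $H$, and that restricting $\mathcal M$ to $H$ and then to $H_i$ reproduces $\mathcal M_{H_i}$, so the input-output equations of Theorem~\ref{thm:ioscc} (hence the coefficient maps) for $\mathcal M$ and $\mathcal M_H$ coincide. Your explicit reparametrization $\pi$ and the case split on its injectivity simply make rigorous the identification of parameters (leaks plus outgoing edges collapsing to the leak labels of $\mathcal M_H$) that the paper's short proof leaves implicit.
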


\begin{proof} 
Assume $\mathcal{M}=(G, In, Out, Leak)$ is identifiable
from the coefficient map.
Let $H$ be the observable component, that is, the union of all
output-reachable subgraphs to outputs $y_i$ (for $i \in Out$).  
By construction, for $i \in Out$, the output-reachable subgraph to
$y_i$ in $G$ is the same as that in $H$.  Hence, the 
input-output equations~\eqref{eq:i-o-for-general-model}
for $\mathcal{M}$ are the same as those for $\mathcal{M}_H$.  
Thus, $\mathcal{M}_H$ is identifiable from the coefficient map.
\end{proof}

%--------------------------
% SUBSECTION: I-O AND GCDs
%--------------------------
\subsection{Input-output GCDs} \label{sec:i-o-gcd}

We now return to Question~\ref{q:gcd}, which concerns input-output GCDs (Definition~\ref{def:gcd}).
Recall that in~\cite[Theorem 2]{MeshkatSullivantEisenberg}, each input-output equation was divided by the corresponding GCD, and also this GCD was proven to
%and in particular, the input-output GCD was found to 
be $1$ in the case of strongly connected models with at least one leak~\cite[Corollary 1]{MeshkatSullivantEisenberg}.  This motivates the following question:

\begin{question} \label{question:gcdone} 
For which models is every input-output GCD equal to 1?
%Can we find necessary and sufficient conditions for the input-output GCD to be equal to 1?
\end{question}

In this subsection, we strengthen~\cite[Corollary 1]{MeshkatSullivantEisenberg}
to allow for strongly connected models without leaks (Proposition~\ref{prop:i-o-for-strongly-connected}). 
Then we elucidate some of the factors of the input-output GCD, and thereby make
progress toward finding the full form of this GCD (Proposition~\ref{prop:gcdscc}).  
As a consequence, we find a necessary condition for the GCD to be $1$ (Corollary~\ref{cor:in-out-reachable}).

% THEOREM
\begin{prop}[Input-output GCDs for strongly connected models] \label{prop:i-o-for-strongly-connected} 
For a % $\mathcal M$ %= (G, In,Out, Leak)$ 
 strongly connected 
 linear compartmental model with %$n$ compartments and 
 at least one input, every input-output GCD is 1.
\end{prop}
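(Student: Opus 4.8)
The plan is to interpret the input-output GCD through the classical Popov--Belevitch--Hautus (PBH) picture: a common factor of $\det(\partial I - A)$ and a minor $\det(\partial I - A)_{ji}$ forces an eigenvalue of $A$ that is either unobservable from the output $i$ or unreachable from the inputs, and strong connectivity of $G$ forbids both. Work over the field $K$ of rational functions in the parameters $a_{ij}, a_{0i}$ and its algebraic closure $\overline{K}$; fix $i \in Out$, and let $j_0 \in In$ (which exists by hypothesis). Since $G$ is strongly connected there is a directed path from $j_0$ to $i$, so $\det(\partial I - A)_{j_0 i} \neq 0$, and this minor genuinely occurs in the set whose GCD defines the input-output GCD of $y_i$. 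That input-output GCD divides both $\det(\partial I - A)$ and $\det(\partial I - A)_{j_0 i}$, so it suffices to prove $\gcd_{K[\partial]}\big(\det(\partial I - A),\, \det(\partial I - A)_{j_0 i}\big) = 1$. (When $n = 1$ this is immediate, since $\det(\partial I - A)_{11} = 1$ by the empty-matrix convention; so assume $n \geq 2$.)

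Suppose instead that some irreducible $q \in K[\partial]$ of positive degree divides both, and let $\lambda \in \overline{K}$ be a root; then $\lambda$ is an eigenvalue of $A$ over $\overline{K}$. If its geometric multiplicity is $\geq 2$, then $\ker(\lambda I - A) \subseteq \overline{K}^n$ has dimension $\geq 2$ and meets the hyperplane $\{v_i = 0\}$ nontrivially. If the geometric multiplicity is $1$, then $\operatorname{rank}(\lambda I - A) = n - 1$, so $\operatorname{adj}(\lambda I - A)$ has rank one and equals $\alpha\, v\, w^{T}$ with $\alpha \neq 0$, $v$ spanning the right eigenspace and $w$ the left eigenspace; since $\det(\lambda I - A)_{j_0 i} = \pm\,\alpha\, v_i\, w_{j_0} = 0$, either $v_i = 0$ or $w_{j_0} = 0$. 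Thus in every case $\lambda$ admits either a right eigenvector $v$ with $v_i = 0$ or a left eigenvector $w$ with $w_{j_0} = 0$. In the first case, deleting the $i$th coordinate of $v$ produces a nonzero vector in the kernel of the $n \times (n-1)$ matrix obtained from $\lambda I - A$ by deleting column $i$; that matrix then has rank $\leq n - 2$, all its maximal minors vanish, and hence $q \mid \det(\partial I - A)_{ki}$ in $K[\partial]$ for every $k \in V$. Dually, in the second case $q \mid \det(\partial I - A)_{j_0 k}$ for every $k \in V$. So it suffices to establish the two statements
\[
\gcd_{K[\partial]}\big(\det(\partial I - A),\ \{\det(\partial I - A)_{ki}\}_{k \in V}\big) = 1
\quad\text{and}\quad
\gcd_{K[\partial]}\big(\det(\partial I - A),\ \{\det(\partial I - A)_{j_0 k}\}_{k \in V}\big) = 1 ,
\]
for then $q$ would divide $1$. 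By the Kalman--PBH dictionary these say exactly that $(A, e_i^{T})$ is observable and $(A, e_{j_0})$ is controllable; and since the relevant observability/controllability matrices have polynomial entries in the parameters, each holds over $K$ as soon as it holds for one parameter choice.

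To exhibit such a choice: because $G$ is strongly connected, every vertex has a directed path to $i$, so $G$ contains a spanning arborescence $T$ oriented toward $i$. Specialize the parameters by assigning the edges of $T$ generic pairwise-distinct rates and setting all other edge rates and all leak rates to $0$. Then $A$ degenerates to the compartmental matrix $A_T$ of the tree $T$, and $(A_T, e_i^{T})$ is observable -- this is the standard fact that a tree-structured system observed at the root is observable for generic rates, verified by induction on $T$ (each successive derivative of $y_i = x_i$ introduces the children, then the grandchildren, and so on, with nonzero, mutually independent coefficients). Hence observability holds generically, giving the first statement. The controllability statement needs a little more care, because a naive ``reverse-tree'' specialization can introduce a dilation at $j_0$; here one uses that $G$ strongly connected implies the graph of $A$ -- which for $n \geq 2$ carries a self-loop at every vertex -- is both input-connectable from $j_0$ and free of dilations, and invokes classical structural controllability to produce a controllable parameter choice. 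Equivalently, both statements are instances of the fact, underlying the paper's discussion of output-reachable subgraphs and structural observability, that a strongly connected linear compartmental model is generically observable from any single output and controllable from any single input.

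I expect this last step -- pinning down that strong connectivity really does yield structural controllability and observability of compartmental systems -- to be the main obstacle; everything else is linear algebra over $\overline{K}$ together with the standard dictionary between input-output GCDs and minimality of a realization. A route that keeps the combinatorics front and center, and which I would fall back on, is to expand each of $\det(\partial I - A)$ and the minors $\det(\partial I - A)_{ji}$ directly via the matrix-tree theorem as weighted sums over spanning forests of the augmented graph, and then argue that any nontrivial common factor would have to come from a nontrivial splitting of $G$ into strong components -- which a strongly connected graph does not have.
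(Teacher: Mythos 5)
Your PBH route is genuinely different from the paper's argument, and its skeleton is sound: reduce to coprimality of $\det(\partial I - A)$ with a single nonzero minor $\det(\partial I - A)_{j_0 i}$, show a common irreducible factor would give an eigenvalue over $\overline{K}$ that is unobservable from $y_i$ or uncontrollable from $u_{j_0}$, and then rule both out generically. But the proof is incomplete exactly where you flag it: the controllability leg. ``Classical structural controllability'' (Lin's inaccessibility/dilation criterion) concerns structured matrices whose nonzero entries are \emph{independent} free parameters, whereas a compartmental matrix has diagonal entries that are algebraically tied to the off-diagonal rates (negative column sums, minus leaks). Strong connectivity does make every diagonal entry nonzero, but generic controllability of the free sparsity pattern does not by itself yield a controllable point on the compartmental subvariety, so the invocation does not close the argument. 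What you need is either the compartmental-specific criterion of Godfrey and Chapman (input connectability plus the trap condition, quoted in Remark~\ref{rmk:inputconnectable}, and satisfied by a strongly connected model with one input whether or not there are leaks), or a direct specialization argument in the spirit of your in-tree construction. Two smaller gaps: the claim that a directed path from $j_0$ to $i$ forces $\det(\partial I - A)_{j_0 i}\neq 0$ is asserted, not proved (it requires the forest/path expansion of that minor; without it the input-output GCD could a priori be $\det(\partial I - A)$ itself), and the observability induction for the in-tree is only a sketch --- siblings at equal depth are ``introduced'' by the same derivative, so you need the pairwise-distinct rates in an actual argument (e.g.\ distinct eigenvalues together with every eigenvector having nonzero root coordinate, or a Vandermonde-type determinant as in a star graph), not just ``children, then grandchildren.''

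For comparison, the paper's proof avoids controllability and observability altogether. It cites \cite{MeshkatSullivantEisenberg} for the case with a leak, and in the leak-free case uses that $\det(\lambda I - A)$ factors into the two irreducibles $\lambda$ and $\det(\lambda I - A)/\lambda$; it then rules out each factor dividing any minor: the factor $\lambda$ by evaluating at $\lambda = 0$ and applying the Matrix-Tree Theorem (a strongly connected graph has a spanning tree rooted at $i$), and the degree-$(n-1)$ factor by a degree count when $i \neq j$ and by comparing the $\lambda^{n-2}$ coefficients when $i = j$ (the minor's coefficient omits the edges leaving $i$, a strict sub-sum by strong connectivity). Your fallback idea of expanding everything by the matrix-tree theorem is essentially pointing toward that style of argument, but as written it is not carried out, so the proposal as it stands has a genuine gap.
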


\begin{proof}%[ Proof of Theorem \ref{thm:i-o-for-strongly-connected}]
If $\mathcal M$ has at least 1 leak, this result is \cite[Corollary 1]{MeshkatSullivantEisenberg}.

Assume that $\mathcal M$ has no leaks.  We consider first the case of $n=1$ compartment, with an input and output at that compartment.  There are no leaks or edges, and hence no parameters.  
The input-output equation from Proposition~\ref{prop:i-o} is 
$ y_1' = u_1$, so the input-output GCD is 1.
%is equation~\eqref{eq:i-o-for-strongly-c} in this case (the matrix $A$ is the $1 \times 1$ zero-matrix).

Now assume that $n \geq 2$ (and $\mathcal M$ has no leaks).  Let $A$ be the compartmental matrix.  
By definition, 
 the input-output GCD of an output variable $y_i$, denoted by 
 $g_i$, is the GCD among the polynomials $\det (\partial I -A)$ and 
the $\det \left( \partial I-A \right)_{ji}$'s for $j \in In$.
%{\color{blue} whenever at least 2 of these polynomials is nonzero (otherwise, $g_i:=1$).  }
Our goal is to prove that $g_i=1$ for all $i \in Out$.

For ease of notation, we consider instead the characteristic polynomial $\det \left( \lambda I-A \right)$ and the related polynomials 
$\det \left( \lambda I-A \right)_{ji}$ for $j \in In$.  We must show that their GCD is 1.
%, which abusing notation, we again call $g_i$, is 1.

By hypothesis, $\mathcal M$ is strongly connected and has no leaks, so $A$ is the negative of the Laplacian matrix of a strongly connected directed graph $G=(V,E)$.  So, by following the argument in~\cite[Proof of Theorem 3]{MeshkatSullivantEisenberg}, 
a factorization of  $\det (\lambda I -A)$ into two irreducible polynomials is given by $\lambda \cdot \left( \det (\lambda I -A)/ \lambda \right)$.  (Here, the $n \geq 2$ assumption is used.)

Therefore, we need only show that neither (i) $\lambda$ nor (ii) $\det (\lambda I -A)/\lambda $
divides any of the $\det \left( \lambda I-A \right)_{ji}$'s. 
For (i), we must show that $\det \left( \lambda I-A \right)_{ji}|_{\lambda = 0}$ is a nonzero polynomial. Indeed, 
 $\det \left( \lambda I-A \right)_{ji}|_{\lambda = 0} = \det (-A)|_{ji}$, which equals (up to sign) the $(i,j)$-th cofactor of the Laplacian matrix of the strongly connected graph $G$.  
 This determinant, after setting every edge parameter  
 $a_{kl}$ equal to 1, is precisely (by the Matrix-Tree Theorem) the number of directed spanning trees of $G$ rooted at $i$, and this number is at least 1 (because $G$ is strongly connected).  So, $\det \left( \lambda I-A \right)_{ji}|_{\lambda = 0}$ is indeed nonzero.
 
Now we consider (ii).  Viewing  $\det (\lambda I -A)/\lambda $ as a univariate polynomial in $\lambda$, its leading term is $\lambda^{n-1}$.  As for  $\det \left( \lambda I-A \right)_{ji}$, we first assume that $i \neq j$. Then the matrix $\left( \lambda I-A \right)_{ji}$ contains only $n-2$ $\lambda$'s in its entries (both the $i$-th and $j$-th diagonal entries of $\left( \lambda I-A \right)$ were removed). So, in this case, $\det \left( \lambda I-A \right)_{ji}$ has degree less than $n-1$ and therefore can not be divisible by the degree-$(n-1)$ polynomial $\det (\lambda I -A)/\lambda $. 

In the remaining case, when $i=j$, it is straightforward to check that the coefficient of $\lambda^{n-2}$ in 
$\det (\lambda I -A)/\lambda $ equals the following sum over all edges in the graph $G$:
\begin{align} \label{eq:sum-edges}
	\sum_{(k,l) \in E} a_{lk}~.
\end{align}
Similarly, the coefficient of $\lambda^{n-2}$ in $\det \left( \lambda I-A \right)_{ji}$ (when $i=j$) equals 
the sub-sum, over edges that do {\em not} originate at $i$, namely,
$\sum_{(k,l) \in E, ~k \neq i} a_{lk}$.   
This sum is a strict sub-sum of the sum~\eqref{eq:sum-edges}, as $G$ is strongly connected.
So, as desired,
 $\det (\lambda I -A)/\lambda $ does not divide
 $\det \left( \lambda I-A \right)_{ji}$.
\end{proof}

\begin{eg}[Example~\ref{eg:scc}, continued] \label{ex:illustrate-gcd=1}
The model displayed on the left-hand side of
Example~\ref{eg:scc} is strongly connected, with input and output in
compartment-1 only.  
It is straightforward to check that $\det (\partial I-A) = 
(d/dt +a_{12}+a_{32}) (d/dt+a_{01}+a_{21})-a_{12}a_{21}$, and $\det(\partial
I-A)_{11}= (d/dt +a_{12}+a_{32})$.  
The GCD of these two differential polynomials is 1, which is
consistent with 
Proposition~\ref{prop:i-o-for-strongly-connected}.
\end{eg}

We next consider the case when $\mathcal M$ need not be strongly connected.  
%We now address the subtle issue of the greatest common divisor (GCD) that arises when using Cramer's Rule to find the input-output equations. 
The next result shows that the input-output GCD of $y_i$ is 
a multiple of certain ``upstream'', ``neutral'', and ``downstream'' components that do {\em not} contain an input leading to the 
compartment-$i$.
%strongly connected component containing $y_i$.  

\begin{defn} \label{def:in-out-reach}
Let $\mathcal{M}=(G, In, Out, Leak)$  be a linear compartmental model. 
Let $i \in Out$ be such that there exists a directed path in $G$ from some $j \in In$ to $i$.
The set of compartments lying along such paths induces the {\em input-output-reachable subgraph to} $i$ (or {\em to} $y_i$); more precisely, this is the induced subgraph of $G$ with vertex set containing $i$, every $j \in In$ such that there is a directed path from $j$ to $i$, and every compartment passed through by at least one such path.
\end{defn}

\begin{rmk}[Input-output-reachable subgraphs and structural controllability/observability] \label{rmk:inputconnectable} A linear compartmental model is \textit{input connectable} \cite{GodfreyChapman} if every compartment has a path leading to it originating from an input compartment.  In control theory, a linear compartmental model is \textit{structurally (completely) controllable} if an input can be found that transfers the state variables $x_i(t)$ from any initial state to any specified final state in finite time \cite{distefano-book}.  A \textit{trap} is a strongly connected set of compartments from which no paths exists to any compartment outside the trap, including the environment (i.e. leaks) \cite{GodfreyChapman}.  
A linear compartmental model is structurally controllable if and only if it is input connectable and it is possible to find disjoint paths starting at input compartments and such that every trap has a compartment at the end of one such path~\cite{GodfreyChapman}.  This means our notion of an \textit{input-output-reachable subgraph to $y_i$} corresponds to a structurally observable and structurally controllable model (in our case, there is either a single trap corresponding to the strongly connected component containing $y_i$ or no traps if that strongly connected component contains a leak).  Note that this conclusion is stronger than our conclusion in Remark \ref{rmk:outputconnectable}, where the output-reachable subgraph to $y_i$ is structurally observable.  Being both structurally observable and structurally controllable means, from \cite{GodfreyChapman}, that no model with a smaller number of compartments can be found that will fit the input-output data induced by this subgraph. In addition, no model with a larger number of compartments can be both structurally controllable and structurally observable, by definition.  Thus, our notion of an input-output-reachable subgraph to $y_i$ has the desirable quality from control theory of being the maximal subgraph corresponding to a model that is both structurally controllable and structurally observable.  
\end{rmk}

The following example motivates our next result.
\begin{eg}[Example~\ref{ex:i-o-running-ex}, continued] \label{ex:gcd-factor}
We return to the model in Figure~\ref{fig:model}, where 
$In=Out=\{1,3\}$. 
The input-output reachable subgraph to $i=1$, which
we denote by $\overline H$, is induced by compartments $1$ and $2$.
Let $\overline{H}^c$ denote the subgraph induced by the remaining
compartments, $3$ and $4$.  Letting $A$ denote the compartmental
matrix for $\mathcal M$, we have:
\begin{align} \label{eq:full-matrix}
  \det (\partial I - A) ~=~ \det
  \left( 
    \begin{array}{cccc}
      d/dt+a_{01}+a_{21} & -a_{12}& 0 & 0 \\
      -a_{21} & d/dt+a_{12}+a_{32} & 0 & 0 \\
      0 & -a_{32} & d/dt+a_{43} & -a_{34} \\
      0 & 0 & -a_{43} & d/dt+a_{34}+a_{04}\\
    \end{array}
    \right)~.
\end{align}
Using~\eqref{eq:full-matrix}, it is straightforward to check that 
both 
$ \det (\partial I - A)$ and 
$\det (\partial I - A_{\overline{H}^c} )_{11}$ 
-- where $A_{\overline{H}^c} $ is the compartmental matrix of the
restriction $\mathcal{M}_{\overline{H}^c} $ --
are multiples of the following:
\[
 \det (\partial I - A_{\overline{H}^c}) = \det \left(
   \begin{array}{cc}
      d/dt+a_{43} & -a_{34} \\
      -a_{43} & d/dt+a_{34}+a_{04}\\
   \end{array}
 \right) ~.
\]
Also, $\det (\partial I - A_{\overline{H}^c} )_{31}=0$. 
Thus, for this model, the input-output GCD of $y_1$ is a multiple of 
$\det (\partial I - A_{\overline{H}^c})$.  The following result shows
that this observation generalizes.
\end{eg}

% Form of GCD
\begin{prop} \label{prop:gcdscc} 
Let $\mathcal{M}=(G, In, Out, Leak)$  be a linear compartmental model. 
Let $i \in Out$ be such that there exists a directed path in $G$ from some $j \in In$ to $i$.
Let $\overline H$ denote the  input-output-reachable subgraph
of $G$ to $i$.
Let $\overline{H}^c$ denote the subgraph induced by all compartments
of $G$ that are {\em not} in $\overline H$, and let $A_{\overline{H}^c}$ denote the compartmental matrix of the
restriction $\mathcal{M}_{\overline{H}^c}$.
Then the input-output GCD of $y_i$ is a multiple of $\det (\partial I - A_{\overline{H}^c})$.
\end{prop}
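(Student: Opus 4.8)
The plan is to prove the stronger fact that $\det(\partial I - A_{\overline{H}^c})$ divides every polynomial in the set $\bigl(\{\det(\partial I - A)\}\cup\{\det(\partial I - A)_{ji}\mid j\in In\}\bigr)\setminus\{0\}$ from Definition~\ref{def:gcd}; since it is then a common divisor of that set, it divides their GCD, which is the claim. The key is a good partition of $\overline{H}^c$. Let $\mathcal{O}$ be the set of compartments from which $i$ is reachable in $G$, and let $\mathcal{R}$ be the set of compartments reachable from some input compartment. By Definition~\ref{def:in-out-reach}, the vertex set of $\overline{H}$ consists of the compartments reachable from some input and from which $i$ is reachable, that is, $\mathcal{R}\cap\mathcal{O}$; the hypothesis that some input reaches $i$ ensures $i\in\mathcal{R}\cap\mathcal{O}$, so the vertex set of $\overline{H}^c$ is $\mathcal{R}^c\cup\mathcal{O}^c$, which I would write as the disjoint union $\overline{H}^c=U\sqcup W$ with $U:=\mathcal{O}^c$ and $W:=\mathcal{O}\cap\mathcal{R}^c$. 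I would then record three edge-direction facts: (i) every edge out of a vertex of $U$ stays in $U$, since a compartment from which $i$ is unreachable has no out-neighbor from which $i$ is reachable; (ii) every edge out of a vertex of $\mathcal{R}$ stays in $\mathcal{R}$, so there is no edge from $\overline{H}$ into $W$; hence (iii) relative to the three vertex classes $U$, $W$, $\overline{H}$, there are no edges of type $U\to W$, $U\to\overline{H}$, or $\overline{H}\to W$.

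Ordering the compartments as $U$, then $W$, then $\overline{H}$, fact (iii) forces $\partial I - A=\begin{pmatrix} M_1 & B_{12} & B_{13}\\ 0 & M_2 & 0\\ 0 & B_{32} & M_3\end{pmatrix}$, where $M_1,M_2,M_3$ are the diagonal blocks indexed by $U,W,\overline{H}$. Expanding the determinant along the block column indexed by $U$ --- whose only nonzero block is the square block $M_1$ --- and then using the block-triangular shape of what remains gives $\det(\partial I - A)=\det M_1\cdot\det M_2\cdot\det M_3$. If $j\in In$ admits a directed path to $i$, then $j,i\in\overline{H}$, so forming $(\partial I - A)_{ji}$ removes a row and a column only from the third block of rows and the third block of columns, and the same expansion gives $\det(\partial I - A)_{ji}=\det M_1\cdot\det M_2\cdot\det M_3'$, where $M_3'$ is $M_3$ with one row and one column removed. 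Next I would check the bookkeeping for the restriction: forming $\mathcal{M}_{\overline{H}^c}$ deletes precisely the edges from $\overline{H}$ into $U$, turns the edges from $W$ into $\overline{H}$ into leaks, and leaves everything among the compartments of $\overline{H}^c$ unchanged; since every diagonal entry of a compartmental matrix sums over all outgoing edges of that compartment, whether recorded as edges or as leaks, the $U$- and $W$-blocks are not altered, so $\partial I - A_{\overline{H}^c}=\bigl(\begin{smallmatrix}M_1 & B_{12}\\ 0 & M_2\end{smallmatrix}\bigr)$ and $\det(\partial I - A_{\overline{H}^c})=\det M_1\cdot\det M_2$.

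It follows that $\det(\partial I - A_{\overline{H}^c})=\det M_1\cdot\det M_2$ divides $\det(\partial I - A)$ and divides $\det(\partial I - A)_{ji}$ for every $j\in In$ that reaches $i$. For $j\in In$ that does not reach $i$ --- equivalently, $j\notin\overline{H}$ --- we have $\det(\partial I - A)_{ji}=0$: letting $Z$ be the set of compartments reachable from $j$, no edge leaves $Z$, so in $(\partial I - A)_{ji}$ the $|V\setminus Z|$ rows indexed by $V\setminus Z$ are supported on the $|V\setminus Z|-1$ columns indexed by $(V\setminus Z)\setminus\{i\}$ and are therefore linearly dependent. Thus those terms are absent from the set in Definition~\ref{def:gcd}, every remaining generator is a multiple of $\det(\partial I - A_{\overline{H}^c})$, and hence so is their GCD. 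The routine parts of this are the two block-determinant expansions and the verification that the $U$- and $W$-blocks survive the restriction; the step that needs care is the first one, where one must correctly identify the vertex set of $\overline{H}$ as $\mathcal{R}\cap\mathcal{O}$ and extract the edge-direction facts (i)--(iii) that produce the block-triangular form. Everything after that is linear algebra over the commutative polynomial ring in $d/dt$ and the rate parameters.
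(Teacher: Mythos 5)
Your proof is correct and follows essentially the same route as the paper's: your partition $\overline{H}^c = U \sqcup W$ (with $U=\mathcal{O}^c$ and $W=\mathcal{O}\cap\mathcal{R}^c$) is exactly the paper's decomposition of $\overline{H}^c$ into the ``downstream/other'' part and the ``upstream'' part, and your factorizations of $\det(\partial I - A)$, of $\det(\partial I - A)_{ji}$ for inputs $j\in\overline{H}$, and of $\det(\partial I - A_{\overline{H}^c})$, together with the vanishing of $\det(\partial I - A)_{ji}$ for inputs $j\notin\overline{H}$, are precisely the paper's claims. The only differences are cosmetic: a different block ordering (the paper orders the compartments to make $\partial I - A$ block lower-triangular) and a direct linear-dependence/support argument for the vanishing minors in place of the paper's rank-drop argument.
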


\begin{proof} 
If $G$ is strongly connected (that is, $G=\overline{H}$), then the result holds trivially.
  
Assume $G$ is not strongly connected.  
Consider all strong components $C$ of $G$ that are not in $\overline H$ and also are 
``upstream'' of $\overline H$, that is, there exists a directed path in $G$ from $C$ to $\overline H$.  Let 
$U$ be the subgraph of $G$ induced by all such strong components. % $C$.

Next, let $D$ denote the subgraph of $G$ induced by all strong components of $G$ that are not in $\overline H$ nor in $U$.  (So, $D$ includes ``downstream'' components of $\overline H$.)  

By construction, 
the input-output reachable subgraph $\overline H$ is a union of strong components of $G$.
So, the vertices of $U$, $D$, and $\overline H$ partition the vertices of $G$.

We claim that there are no directed edges
(i)~from a compartment in $\overline H$ to one in $U$, 
(ii)~from a compartment in $D$ to one in $\overline H$, nor
(iii)~from a compartment in $D$ to one in $U$.  
For (i) and (ii), this claim follows from the definition of strong component and by construction of $\overline H$, $U$, and $D$.  For (iii), such an edge would violate the definition of $U$: there would be a strong component in $D$ that should have been in $U$.

Reorder the compartments so that those in $U$ come first, and then those in $\overline H$, and finally those in $D$.  
The lack of edges from $\overline H$ to $U$, from $D$ to $\overline H$, and from $D$ to $U$, yields the following block lower-triangular form
for the compartmental matrix for $\mathcal M$:
\begin{align} \label{eq:A-matrix}
A~=~
\left( 
\begin{array}{c@{}c@{}c}
% line 1
 \left[\begin{array}{c}
         A_U \\      \end{array}\right] & \mathbf{0} & \mathbf{0} \\     
% line 2          
         \star & \left[\begin{array}{c}
                       A_{\overline H}  \\                          \end{array}\right] & \mathbf{0} \\   
% line 3
                        \star & \star & \left[ \begin{array}{c}
                                   A_D\\                                      \end{array}\right] \    \end{array}\right)~.
\end{align}
By construction, the diagonal blocks $A_U$, $A_{\overline H}$, and $A_D$ 
are the compartmental matrices of the restriction of $\mathcal M$ to, respectively,
$U$, $\overline H$, and $D$.  
By construction, the compartmental matrix for $\overline{H}^c$ has the following form:
\begin{align*} %\label{eq:A-H-c-matrix}
A_{\overline{H}^c}~=~
\left( 
\begin{array}{c@{}c}
% line 1
 \left[\begin{array}{c}
         A_U \\      \end{array}\right] & \mathbf{0} \\     
% line 2          
% line 3
                        \star  & \left[ \begin{array}{c}
                                   A_D\\                                      \end{array}\right] \    \end{array}\right)~.
\end{align*}
So, 
\begin{align} \label{eq:decompose-dA-c-I}
	\det ( \partial I - A_{\overline{H}^c} ) ~=~ \det ( \partial I - A_{U} )~
						  \det ( \partial I - A_{D} )~.
\end{align}
Similarly, the following equality follows from~\eqref{eq:A-matrix}:
\begin{align} \label{eq:decompose-dA-I}
	\det ( \partial I - A ) ~=~ \det ( \partial I - A_{U} )~
						 \det ( \partial I - A_{\overline H} )~
						  \det ( \partial I - A_{D} )~.
\end{align}

Recall that the input-output GCD of $y_i$ is the GCD of the determinants 
$\det (\partial I -A)$ and the $\det (\partial I -A)_{ji}$'s (for $j \in In$).  So, 
to show that this GCD is a multiple of $\det (\partial I - A_{\overline{H}^c})$, 
using equations~\eqref{eq:decompose-dA-c-I} and \eqref{eq:decompose-dA-I}, 
%\eqref{eq:i-o-proof-M}, and~\eqref{eq:i-o-proof-M-H}, 
we need only show the following claims for every $j \in In$:
\begin{enumerate}[label=(\roman*)]
	\item if $j$ is {\em not} a compartment in $\overline H$, then 
		$\det (\partial I -A)_{ji}=0$.
	\item  if $j$ is %a compartment 
		in $\overline H$, then 
		$\det (\partial I -A)_{ji}= \det ( \partial I - A_{U} ) 
						 \det ( \partial I - A_{\overline H} )_{ji}
						  \det ( \partial I - A_{D} )$.	
\end{enumerate}
To prove claim (i), assume that $j \in In$ is not in $\overline H$.  
We know that $j$ is in $D$, because compartments in $U$ are not input compartments (otherwise they would be in $\overline H$).  
Removing column-$i$ from the block lower-triangular 
 matrix $\partial I - A$ (one of the columns involving the $(\partial I - A_{\overline H})$-block) drops the rank by 1, and then removing row-$j$ (one of the rows involving the 
 $(\partial I - A_{D})$-block) drops the rank again by 1.  So, $\det (\partial I -A)_{ji}=0$.

For claim (ii), assume that $j \in In$ is in $\overline H$.  In this case, 
both row-$j$ and column-$i$ of $\partial I - A$ involve the block $A_{\overline H}$, so the 
submatrix $ (\partial I -A)_{ji}$ is also block lower-triangular, with upper-left block equal to $ ( \partial I - A_{U} ) $ and lower-right block $( \partial I - A_{D} ) $.   Therefore, 
 $\det (\partial I -A)_{ji}$ factors as claimed in (ii).
\end{proof}   

The following result follows directly from Proposition~\ref{prop:gcdscc}:

\begin{corollary}  \label{cor:in-out-reachable}
Consider a linear compartmental model $\mathcal M=(G,In,Out,Leak)$ with at least one input.  If the input-output GCD of an output variable $y_i$ is 1, then 
 the input-output-reachable component to $y_i$ is the full graph $G$.
\end{corollary}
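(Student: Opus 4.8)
The plan is to prove the contrapositive of Corollary~\ref{cor:in-out-reachable}: assuming the input-output-reachable subgraph $\overline H$ to $y_i$ is \emph{not} the full graph $G$, I will show that the input-output GCD of $y_i$ is not $1$. Since the conclusion of the corollary presupposes that $\overline H$ is defined --- Definition~\ref{def:in-out-reach} requires a directed path in $G$ from some input compartment to compartment-$i$ --- I would split into two cases according to whether such a path exists, treating the ``no path'' case as one in which $\overline H$ is not all of $G$ (indeed, it is empty).

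In the first case, a directed path from some $j \in In$ to compartment-$i$ exists, so $\overline H$ is defined, and we assume $\overline H \neq G$. Then the complementary induced subgraph $\overline{H}^c$ has at least one vertex, so its compartmental matrix $A_{\overline{H}^c}$ is a square matrix of size $m \geq 1$, and therefore $\det(\partial I - A_{\overline{H}^c})$ is monic of degree $m$ in $d/dt$ --- in particular it is not the constant polynomial $1$. By Proposition~\ref{prop:gcdscc}, the input-output GCD of $y_i$ is a multiple of $\det(\partial I - A_{\overline{H}^c})$, hence also has positive degree in $d/dt$ and so is not $1$. This case is essentially immediate from Proposition~\ref{prop:gcdscc}.

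In the remaining case, there is no directed path in $G$ from any input compartment to compartment-$i$. Here I would observe that for every $j \in In$ the differential polynomial $\det(\partial I - A)_{ji}$ vanishes identically: up to sign it is the $(i,j)$-cofactor of $\partial I - A$, equal to $\det(\partial I - A)$ times the $(i,j)$-entry of the formal resolvent $(\partial I - A)^{-1} = \sum_{k \geq 0} A^k (d/dt)^{-k-1}$, and the $(i,j)$-entry of each $A^k$ is a weighted count of length-$k$ walks from $j$ to $i$, all of which are zero when no such walk exists. Consequently, by Definition~\ref{def:gcd}, the input-output GCD of $y_i$ equals $\det(\partial I - A)$, which is monic of degree $n = |V| \geq 1$ and so is not $1$. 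The only point requiring a little care is this last case --- making precise that ``no input reaches $i$'' forces the relevant cofactors to vanish --- but this is a standard fact about resolvents of weighted digraphs, in the same spirit as the Matrix-Tree arguments used for Proposition~\ref{prop:i-o-for-strongly-connected}; everything else follows directly from Proposition~\ref{prop:gcdscc} and the observation that a nonempty compartmental matrix has a determinant of positive differential order.
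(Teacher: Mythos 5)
Your proof is correct and takes essentially the same route as the paper, which simply observes that the corollary follows directly from Proposition~\ref{prop:gcdscc}: your first case is exactly that deduction, in contrapositive form, using that $\det(\partial I - A_{\overline{H}^c})$ has positive degree in $d/dt$ whenever $\overline{H}^c$ is nonempty. Your second case, where no input reaches compartment-$i$ and all the minors $\det(\partial I - A)_{ji}$ vanish so the GCD is $\det(\partial I - A)$ itself, is a valid and slightly more careful treatment of a degenerate situation the paper leaves implicit.
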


% SECTION 4
\section{Identifiability results on adding or removing inputs, outputs, or leaks} \label{sec:leak}
In this section, we show that identifiability is preserved when 
inputs or outputs are added to a model (Proposition~\ref{prop:add-in-out}), 
or, under certain hypotheses, when a leak is added (Theorem~\ref{thm:add-leak}) or removed (Proposition~\ref{prop:removeleaks}).  For a result on which edges can be safely removed without losing identifiability, we refer the reader to~\cite[Theorem~3.1]{singularlocus}.

Our first result is a %formal 
proof of the widely accepted fact that, if a model is identifiable, adding inputs or outputs preserves identifiability.  We include its proof for completeness, as we could not find a formal proof in the literature.

%{\color{red} For three results in this section, the proof refers to the input-output equations from Proposition~\ref{prop:i-o}).  May need to fix this later.}

\begin{proposition}[Adding inputs or outputs] \label{prop:add-in-out}
Let $\mathcal M=(G,In,Out,Leak)$ be a linear compartmental model that 
has at least one input, 
and let $\widetilde{\mathcal{M}}$ be a model obtained from $\mathcal M$ by adding an input or an output (that is, $In$ or $Out$ is enlarged by one compartment). %to a new compartment.   
If $\mathcal M$ is generically globally (respectively, locally)
identifiable from the coefficient map,
then so is $\widetilde{\mathcal{M}}$. 
\end{proposition}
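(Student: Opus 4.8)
The plan is to show that the coefficient map of $\widetilde{\mathcal M}$ factors through (or contains as a "sub-block") the coefficient map of $\mathcal M$, so that injectivity of the latter forces injectivity of the former. Since $\mathcal M$ and $\widetilde{\mathcal M}$ have the same underlying graph $G$ (only $In$ or $Out$ changes), they have the same parameter space $\R^{|E|+|Leak|}$, and it suffices to produce, for each input-output equation of $\mathcal M$ (in the sense of Definition~\ref{defn:identify}, i.e.\ coming from Theorem~\ref{thm:ioscc}), a corresponding input-output equation of $\widetilde{\mathcal M}$ with the same coefficients (up to the monic normalization). Then every coordinate of the coefficient map $c$ of $\mathcal M$ appears as a coordinate of the coefficient map $\widetilde c$ of $\widetilde{\mathcal M}$, so $\widetilde c$ one-to-one (respectively, locally one-to-one outside measure zero) follows immediately from the same property of $c$.

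First I would handle the case of \emph{adding an output}: let $\widetilde{Out} = Out \cup \{\ell\}$. For each $i \in Out$, the output-reachable subgraph to $y_i$ is the same in $\mathcal M$ and $\widetilde{\mathcal M}$ (output-reachability only depends on $G$), so the restriction $\mathcal M_H$ and hence the input-output equation~\eqref{eq:i-o-for-general-model} for $y_i$ is literally unchanged. Thus $\widetilde c$ consists of all the coefficients appearing in $c$, plus possibly extra coordinates coming from the new equation associated to $y_\ell$. Adding coordinates to a map cannot destroy injectivity, so $\widetilde{\mathcal M}$ inherits identifiability. (One subtlety: Theorem~\ref{thm:ioscc} requires that there be a directed path from an input to the output compartment in question; if no such path exists for some $i$, that output contributes nothing to the coefficient map in Definition~\ref{defn:identify}, but this is exactly the same for $\mathcal M$ and $\widetilde{\mathcal M}$, so the argument is unaffected.)

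Next I would handle \emph{adding an input}: let $\widetilde{In} = In \cup \{m\}$. Now for a fixed $i \in Out$ with a path from some input to $i$, the output-reachable subgraph $H$ to $y_i$ is still the same, so $A_H$ is unchanged; what changes is that the right-hand side of~\eqref{eq:i-o-for-general-model} may acquire an extra term $(-1)^{i+m}\det(\partial I - A_H)_{mi}\, u_m$ if $m \in V_H$. The left-hand side $\det(\partial I - A_H) y_i$ — which carries \emph{all} the coefficients built from the characteristic polynomial — is identical, and the existing right-hand-side terms (for $j \in In \cap V_H$) are identical. After monic normalization the same set of coefficient functions reappears, together with at most the new coefficients from the $u_m$ term. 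Hence again every coordinate of $c$ is a coordinate of $\widetilde c$, and identifiability transfers. If $m \notin V_H$, the equation for $y_i$ is literally unchanged.

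The main thing to be careful about — and what I'd flag as the only real obstacle — is verifying cleanly that "adding coordinates to the coefficient map preserves (generic, local) injectivity," and in particular that the monic normalization does not spoil the bookkeeping: one must check that the leading differential monomial of~\eqref{eq:i-o-for-general-model}, used to make the equation monic, is the \emph{same} in $\mathcal M$ and $\widetilde{\mathcal M}$ (it is, since it is governed by $\det(\partial I - A_H)$, which does not change), so that the non-monic coefficients being compared are genuinely the same functions of the parameters. Once that is pinned down, the inference is purely set-theoretic: if $\pi \circ \widetilde c = c$ for a coordinate projection $\pi$, then injectivity of $c$ on a set $S$ implies injectivity of $\widetilde c$ on $S$, and a full-measure $S$ for $c$ is a full-measure $S$ for $\widetilde c$; the same reasoning applied pointwise to neighborhoods $\mathcal U$ gives the local statement. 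I would write this last step out explicitly but briefly, since it is where the logical content lives.
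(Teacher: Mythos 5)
Your proposal is correct and follows essentially the same route as the paper: both arguments observe that, by Theorem~\ref{thm:ioscc}, adding an output contributes a new input-output equation and adding an input only contributes new coefficients to existing equations, so the coefficient map of $\widetilde{\mathcal M}$ has the form $\widetilde c = (c,\overline c)$ and inherits (generic global or local) injectivity from $c$. Your additional checks -- that the output-reachable subgraphs and the monic leading term $\det(\partial I - A_H)$ are unchanged -- are useful elaborations of details the paper leaves implicit, but they do not change the argument.
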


\begin{proof}
Adding an output yields a new input-output equation, while adding an input 
adds coefficients to existing input-output equations, by
  Theorem~\ref{thm:ioscc}. %Proposition~\ref{prop:i-o}).
Either case extends the coefficient map
$c:  \R^{|E| + |Leak|}  \rightarrow \R^{k}$
to some 
$\widetilde c =(c, \overline{c}):  \R^{|E| + |Leak|}  \rightarrow \R^{k+ \ell}$~. 
Specifically, in the case of adding an output, $\overline{c}$ corresponds to the coefficients 
of the additional equation from the new output, while in the case of adding an input, 
$\overline{c}$ corresponds to coefficients of the new input variable $u_j$ and its derivatives on the right-hand side of the 
input-output equations~\eqref{eq:i-o-for-general-model}.
If $c$ is generically one-to-one (respectively, generically finite-to-one)
%locally one-to-one),
then so is $\widetilde c$.  %Hence, our desired result holds for adding outputs.
\end{proof}

\begin{rmk} The converse to Proposition \ref{prop:add-in-out} is not true in general (see Examples \ref{ex:delete-in} and \ref{ex:delete-out} in the next section).  
In other words, identifiability can be lost by removing inputs or outputs.  
Accordingly, there are minimal sets of outputs for identifiability; 
these sets were investigated by 
Anguelova, Karlsson, and Jirstrand \cite{Anguelova}.  
However, finding minimal sets of inputs for a given output remains an open question.
\end{rmk}

The next results investigate how adding or removing a leak affects identifiability.

\begin{theorem}[Adding one leak]
%[Lemma~\ref{lem:add-1-leak}, restated] 
\label{thm:add-leak}
Let $\mathcal M$ be a linear compartmental model that is strongly connected and 
has at least one input and no leaks.
Let $\widetilde{\mathcal{M}}$ be a model obtained from $\mathcal M$ by adding one leak.  If $\mathcal M$ is generically locally identifiable from the coefficient map,
then so is $\widetilde{\mathcal{M}}$. 
\end{theorem}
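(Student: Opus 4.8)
The plan is to reduce the statement to a Jacobian rank computation via Proposition~\ref{prop:jacobian}, exploiting that adding one leak at a compartment $\ell$ changes only a single entry of the compartmental matrix: if $A$ is the compartmental matrix of $\mathcal M$, then that of $\widetilde{\mathcal M}$ is $\widetilde A = A - a_{0\ell}E_{\ell\ell}$, where $E_{\ell\ell}$ has a $1$ in position $(\ell,\ell)$ and zeros elsewhere. Since both $\mathcal M$ and $\widetilde{\mathcal M}$ are strongly connected with an input, the output-reachable subgraph to every output is all of $G$, so the input-output equations of Theorem~\ref{thm:ioscc} are exactly those of Proposition~\ref{prop:i-o}, and every input-output GCD equals $1$ by Proposition~\ref{prop:i-o-for-strongly-connected}; hence Proposition~\ref{prop:jacobian} applies to the coefficient maps $c$ of $\mathcal M$ and $\widetilde c$ of $\widetilde{\mathcal M}$. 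Because the leak is not an edge, $|E|$ is unchanged while $|Leak|$ goes from $0$ to $1$, so it suffices to prove: if $Jc$ has rank $|E|$ at a generic point, then $J\widetilde c$ has rank $|E|+1$ at a generic point.

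First I would set up the comparison on the slice $a_{0\ell}=0$. The coordinates of both $c$ and $\widetilde c$ are the non-leading coefficients, as polynomials in $\partial = d/dt$, of $\det(\partial I - A)$ (resp.\ $\det(\partial I - \widetilde A)$) and of the minors $\det(\partial I - A)_{ji}$ (resp.\ with $\widetilde A$), for $i \in Out$, $j \in In$; since changing only the $(\ell,\ell)$ entry alters none of the relevant degrees in $\partial$, these coordinate sets match, and $\widetilde c(a,0) = c(a)$ under this matching, where $a$ denotes the tuple of edge parameters. Hence the rows of $J\widetilde c(a,0)$ indexed by the edge parameters form $Jc(a)$, which has rank $|E|$ for generic $a$; fix a set $S$ of $|E|$ coordinate columns on which $Jc$ has a generically nonvanishing maximal minor. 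It then remains to produce one more coefficient coordinate $g \notin S$ whose column in $J\widetilde c$ vanishes on every edge-parameter row at points with $a_{0\ell}=0$ but is nonzero in the $a_{0\ell}$-row for generic $a$: a block-triangular determinant on the columns $S \cup \{g\}$ then forces rank $|E|+1$ at such a point, and hence generically.

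The coordinate $g$ I would use is the constant term (the coefficient of $\partial^0$, i.e.\ of $y_i$) of $\det(\partial I - \widetilde A)$. By a cofactor expansion (a rank-one update of the determinant), $\det(\partial I - \widetilde A) = \det(\partial I - A) + a_{0\ell}\det(\partial I - A)_{\ell\ell}$, so $g = \det(-A) + a_{0\ell}\det(-A)_{\ell\ell}$, where $\det(-A)_{\ell\ell}$ denotes the determinant of $-A$ with row and column $\ell$ removed. Two structural facts drive the argument: (a)~$\mathcal M$ has no leaks and is strongly connected, so every column of $A$ sums to zero, whence $\det(-A)=0$ and $g = a_{0\ell}\det(-A)_{\ell\ell}$; and (b)~exactly as in the proof of Proposition~\ref{prop:i-o-for-strongly-connected}, the Matrix-Tree theorem identifies $\det(-A)_{\ell\ell}$, with all edge parameters set to $1$, with the number of directed spanning trees of $G$ rooted at $\ell$, which is at least one since $G$ is strongly connected, so $\det(-A)_{\ell\ell}$ is a nonzero polynomial in $a$. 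Therefore $\partial g/\partial a_{ij} = a_{0\ell}\,\partial(\det(-A)_{\ell\ell})/\partial a_{ij}$ vanishes when $a_{0\ell}=0$, while $\partial g/\partial a_{0\ell} = \det(-A)_{\ell\ell}$ is generically nonzero; and $g \notin S$ because the $\mathcal M$-counterpart of $g$, the constant term of $\det(\partial I - A) = \partial \cdot (\det(\partial I - A)/\partial)$, is identically zero and so cannot appear in a nonvanishing maximal minor of $Jc$. Picking $a^*$ at which both the chosen maximal minor of $Jc$ and $\det(-A)_{\ell\ell}$ are nonzero, the $(|E|+1)\times(|E|+1)$ submatrix of $J\widetilde c(a^*,0)$ on columns $S \cup \{g\}$ is block triangular with invertible diagonal blocks, so $J\widetilde c$ has rank $|E|+1$ at $(a^*,0)$ and hence generically (its low-rank locus being Zariski-closed and now proper); Proposition~\ref{prop:jacobian} then gives the claim.

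The main difficulty I anticipate is not any single computation but the bookkeeping that ties the hypotheses to Proposition~\ref{prop:jacobian}: checking that the two coefficient maps really have matching coordinates with $\widetilde c(a,0)=c(a)$ (modulo coordinates that vanish identically for $\mathcal M$); that the witness column $g$ is disjoint from the rank-$|E|$ columns $S$, which is precisely where the no-leak hypothesis is used, since it forces $\det(-A)=0$ and thereby annihilates the $\mathcal M$-counterpart of $g$; and that ``generic'' can be taken simultaneously for the finitely many nonzero polynomials that occur, after which lower semicontinuity of matrix rank on a Zariski-open set closes the argument.
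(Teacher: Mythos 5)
Your proposal is correct and follows essentially the same route as the paper's proof: both hinge on the coefficient of $y_i$ (the constant term of $\det(\partial I - \widetilde A)$), which is $a_{0\ell}$ times the spanning-tree polynomial rooted at the leak compartment and identically zero for the leak-free model, together with the relation $\widetilde c = c + a_{0\ell}v$ and a block-triangular $(|E|+1)\times(|E|+1)$ Jacobian minor evaluated at $a_{0\ell}=0$, concluded via Proposition~\ref{prop:jacobian}. The only cosmetic differences are your transposed Jacobian convention and deriving the tree-polynomial identity directly from the Matrix-Tree theorem rather than citing \cite[Proposition~4.6]{singularlocus}.
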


\begin{proof}
Let $\mathcal M=(H,In,Out,Leak)$, with  $H=(V,E)$, be a strongly connected linear compartmental model with $n$ compartments, at least 1 input, and no leaks.  
%So, the underlying directed graph of $\mathcal M$ on $n$ vertices, which we denote by $H=(V,E)$, is strongly connected.
Let $\widetilde{\mathcal{M}}$ be obtained from $\mathcal M$ by adding a leak from one compartment, which we may assume is compartment-$1$.  
 
As in~\eqref{eq:main}, we write the ODEs of $\mathcal M$ and $\widetilde{\mathcal{M}}$, respectively, as follows:
\[
		\frac{dx(t)}{dt}=A~x(t)+u(t) \quad \quad {\rm and}
		\quad \quad
		 \frac{dx(t)}{dt}= \widetilde A~x(t)+u(t) ~.
\]
Here $A$ is the $n \times n$ compartmental matrix for $\mathcal M$, 
%with off-diagonal entries 
%$a_{ij}$ whenever $(j,i) \in E$ (and 0 otherwise) and 
so the 
column sums are 0 (because $A$ is the negative of the Laplacian matrix of a graph).  
Also, $\widetilde A$ is obtained from $A$ by adding $-a_{01}$ (where $a_{01}$ is the new leak parameter) to the (1,1)-entry.

The following are the input-output equations for
  $\mathcal M$
arising from Theorem~\ref{thm:ioscc}: 
%Proposition~\ref{prop:i-o}}:
 \begin{align} \label{eq:i-o-for-M}
 	\det (\partial I -A) y_i ~=~  (-1)^{i+j} \sum_{j \in In} \det \left( \partial I-A \right)_{ji} u_j 
	\quad \quad {\rm for~} i \in Out~.
 \end{align}
%Here, $In$ and $Out$ denote, respectively, the input-species and output-species sets of $\mathcal M$.  
We know that $| Out| \geq 1$, because $\mathcal M$ is identifiable.  
%Also, $\partial I$ denotes the $n \times n$ matrix in which every diagonal entry is the differential operator $d/dt$ and every off-diagonal entry is 0.  And $\left( \partial I-A \right)_{ji}$ denotes the $(n-1) \times (n-1)$ matrix obtained from $\left( \partial I-A \right)$ by removing row $j$ and column $i$.

As for $\widetilde{\mathcal{M}}$, 
again by Theorem~\ref{thm:ioscc},%Proposition~\ref{prop:i-o},
%again by Theorem~\ref{thm:i-o-for-strongly-connected}, 
%using \cite[Corollary 1]{MeshkatSullivantEisenberg} (and the fact that $\mathcal M$ is strongly connected with $n \geq 2$), a %minimal 
the following are input-output equations:
 \begin{align} \label{eq:i-o-for-M-tilde}
 	\det (\partial I - \widetilde A) y_i ~=~ (-1)^{i+j} \sum_{j \in In} \det \left( \partial I- \widetilde A \right)_{ji} u_j 
	\quad \quad {\rm for~} i \in Out~.
 \end{align}

Let $c: \mathbb{R}^{|E|} \to \mathbb{R}^k$ be the coefficient map for $\mathcal M$ arising from the coefficients of the input-output equations in~\eqref{eq:i-o-for-M},
where the first coefficient is chosen to be the coefficient of $y_i$ in the left-hand side of \eqref{eq:i-o-for-M}. Notice that this coefficient is independent of the choice of $i \in Out$ and, in fact, is equal to 0 because $\mathcal M$ is strongly connected and has no leaks~\cite{MeshkatSullivantEisenberg}.  

Similarly, let $\widetilde c: \mathbb{R}^{|E|+1} \to \mathbb{R}^k$ denote the coefficient map for $\widetilde{\mathcal{M}}$ coming from the input-output equations in~\eqref{eq:i-o-for-M-tilde}, where the coefficients are chosen in the same order as for $c$.  
The first coefficient,  $\widetilde c_1$,
in contrast with the one in the previous coefficient map, 
is not equal to~0.
In fact, we claim that this coefficient is as follows:
	\begin{align} \label{eq:first-coef}
	\widetilde c_1 ~=~ a_{01} %\left(
				\sum_{\mathcal{T} \in \tau_1} \pi_{\mathcal{T}}~, %\right)~,
	\end{align}
where $\tau_1$ denotes the set of all (directed) spanning trees of $H$ consisting of $(n-1)$ edges, each of which is directed toward node 1 (the root of the tree), and $\pi_{\mathcal{T}}$ denotes the following monomial in $\mathbb{Q}[a_{ji} \mid (i,j) \in E]$:
	\[
	\pi_{\mathcal{T}} ~:=~ \prod_{(i,j) {\rm~is~an~edge~of~} {\mathcal{T}}} a_{ji}~.
	\]
Indeed, it is straightforward to check equality~\eqref{eq:first-coef} using \cite[Proposition~4.6]{singularlocus}.  

The sum $\sum_{{\mathcal{T}} \in \tau_1} \pi_{\mathcal{T}}$ does not involve $a_{01}$ (by construction) and is a nonzero polynomial (because $H$ is a strongly connected digraph on $n$ nodes and hence contains at least one $(n-1)$-edge spanning tree rooted at node 1).  
Thus, using~\eqref{eq:first-coef}, we see that 
	$\frac{\partial \widetilde c_1}{\partial a_{01}} = \sum_{{\mathcal{T}} \in \tau_1} \pi_{\mathcal{T}}$. 
	So, $\frac{\partial \widetilde c_1}{\partial a_{01}} $ is a nonzero polynomial that does not involve $a_{01}$:
	\begin{align} \label{eq:no-a01}
	 \left( \frac{\partial \widetilde c_1 }{\partial a_{01}} \right) |_{a_{01}=0} ~=~ 
	 	\frac{\partial \widetilde c_1 }{\partial a_{01}} ~=~
		 \sum_{{\mathcal{T}} \in \tau_1} \pi_{\mathcal{T}} \quad \textrm{is a nonzero polynomial.}
	\end{align}

% and is in fact equal to $\det \widetilde A$ ({\color{red} and this determinant is not the zero polynomial: it is $a_{01}$ times the sum over $n-1$-trees directed toward compartment 1}  

Going beyond the first coordinate, we claim that the full coefficient map has the form:
	\begin{align} \label{eq:c-and-c-tilde}
	\widetilde c ~=~ c + a_{01}v~,
	\end{align}
for some $v \in (\mathbb{Q}[a_{ji} \mid (i,j) \in E])^k$.
Indeed, this claim follows from~\eqref{eq:i-o-for-M} and~\eqref{eq:i-o-for-M-tilde}, and from the fact that
$\widetilde A$ comes from adding $-a_{01}$ to one entry of $A$.  

%As shown in \cite{MeshkatSullivantEisenberg}, the coefficients of the input-output equations on the left-hand side of the equations (next to the output terms) are the coefficients of the characteristic polynomial of the matrix $A$.  
Now we consider ${\rm Jac}(c)$, the Jacobian matrix of $c$.  
The first row corresponds to the coefficient that we saw is 0, so this row is a row of 0's:
 \begin{align} \label{eq:jac-c}
 	{\rm Jac}(c) ~=~ 
	\begin{pmatrix}
	0 & \dots & 0 \\
	*  & \dots & * \\
	\vdots & \ddots & \vdots \\
	* &  \dots & *
	\end{pmatrix}~.
 \end{align}
When evaluated at a generic point, ${\rm Jac}(c)$ has (full) rank
equal to $|E|$ (because $\mathcal M$ is generically locally
identifiable from $c$, and by Proposition~\ref{prop:jacobian}).
So, there exists a choice of $|E|$ rows of ${\rm Jac}(c)$, which we index by $i_1,\dots, i_{|E|}$,
 so that the resulting
$|E| \times |E|$ 
 matrix, denoted by $J$, is generically full rank.  That is, $\det J$ is a nonzero polynomial in $\mathbb{Q}[a_{ji} \mid (i,j) \in E]$.

By~\eqref{eq:c-and-c-tilde}, we know  
${\rm Jac}(\widetilde c)$ is obtained from 
${\rm Jac}( c)$ by first adding polynomial-multiples of $a_{01}$ to some entries, and then appending a column corresponding
to the new parameter, $a_{01}$:
\begin{align} \label{eq:jac-c-tilde}
{\rm Jac}(\widetilde c) ~=~
	\left( \begin{array}{c|c}
	~ 					& \frac{\partial \widetilde c_1 }{\partial a_{01}} \\
	{\rm Jac}(c)+a_{01}K 	& * \\
	~					& \vdots \\ 
	~					& * \\
 	\end{array} \right)~,
 \end{align} 
for some $k \times |E|$ matrix 
 $K$  
  with entries in $\mathbb{Q}[a_{ji} \mid (i,j) \in E]$.  
%  In ${\rm Jac}(\widetilde c)$, the top-right entry, $ \frac{\partial \widetilde c_1 }{\partial a_{01}}$, as we observed earlier, 
 % is a   nonzero polynomial that does not involve $a_{01}$.

Let $\widetilde J$ denote the 
$(|E|+1) \times (|E|+1)$ 
submatrix of ${\rm Jac}(\widetilde c)$ coming from choosing 
the first row and 
 the rows indexed by $i_1,\dots, i_{|E|}$.  
Then, by construction and from 
using equations~\eqref{eq:no-a01}, 
\eqref{eq:jac-c}, and~\eqref{eq:jac-c-tilde}, 
%and recalling that 
%$\frac{\partial  \widetilde c_1 }{\partial a_{01}}$ does not involve $a_{01}$, 
we obtain:
	\begin{align} \label{eq:det-J-tilde}
	\left(\det \widetilde J \right)|_{a_{01}=0} 
	~&=~
	\det \left( \widetilde J|_{a_{01}=0}\right)  
	&=~ 
	\det 
	 \left( \begin{array}{ccc|c}
	0 & \dots & 0 & \frac{\partial  \widetilde c_1 }{\partial a_{01}} \\
\hline
	~& ~& ~ & * \\
	~& J & ~ & \vdots \\
	~& ~& ~ & * \\
 \end{array} \right)
	~&=~
		\pm 	
	 \frac{\partial  \widetilde c_1  }{\partial a_{01}} ~\det J~.
	\end{align}

As noted earlier, $\frac{ \partial  \widetilde c_1  }{\partial a_{01}}$ and $\det J$ are both nonzero polynomials.  So, 
equation~\eqref{eq:det-J-tilde} implies that $\det \widetilde J$ also
is a nonzero polynomial.  So, ${\rm Jac}( \widetilde c)$ generically
has (full) rank equal to $|E|+1$.  And thus, 
by definition and
 Proposition~\ref{prop:jacobian}, the model 
$\widetilde{\mathcal{M}}$ is generically locally identifiable
from the coefficient map.
\end{proof}

\begin{rmk} \label{rmk:similar-pf-s-locus}
Our proof of Theorem~\ref{thm:add-leak} is similar to that for~\cite[Theorem 3.11]{singularlocus}, which analyzed whether identifiability is preserved when an edge of a linear compartmental model is deleted.  Indeed, our results explain why, 
for the models we considered in~\cite{singularlocus}, 
the leak parameters (labeled $a_{01}$ in~\cite{singularlocus}) do not divide the singular-locus equation (see Proposition~\ref{prop:removeleaks}). %Table 1 of~\cite{singularlocus}
\end{rmk}

We conjecture that the converse of Theorem~\ref{thm:add-leak} holds.

\begin{conjecture} [Deleting one leak] \label{conj:iff-leak}
 Let $\widetilde{\mathcal M}$ be a linear compartmental model that is
 strongly connected and has at least one input and exactly one
 leak.  If $\widetilde{\mathcal M}$ is generically locally
 identifiable from the coefficient map, then so is the model $\mathcal M$ obtained from
 $\widetilde{\mathcal M}$ by removing the leak.
\end{conjecture}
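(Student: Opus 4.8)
The plan is to reverse the argument used in the proof of Theorem~\ref{thm:add-leak}. We are given $\widetilde{\mathcal M}$ strongly connected, with at least one input and exactly one leak, say from compartment-$1$ with parameter $a_{01}$. Let $\mathcal M$ be obtained by deleting that leak, so that the compartmental matrix $A$ of $\mathcal M$ and the matrix $\widetilde A$ of $\widetilde{\mathcal M}$ are related by $\widetilde A = A - a_{01} E_{11}$. Writing down the input-output equations from Theorem~\ref{thm:ioscc} for both models (valid since both are strongly connected, hence each output-reachable subgraph is the whole graph, and there is an input-to-$i$ path for each $i \in Out$ because the graph is strongly connected and $In \neq \emptyset$), we again get the relation $\widetilde c = c + a_{01} v$ for some polynomial vector $v \in (\Q[a_{ji} \mid (i,j) \in E])^k$, exactly as in equation~\eqref{eq:c-and-c-tilde}. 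In particular, $c = \widetilde c|_{a_{01}=0}$ as maps $\R^{|E|} \to \R^k$ (formally, restrict $\widetilde c$ to the hyperplane $a_{01}=0$).

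The first step is to translate the hypothesis: $\widetilde{\mathcal M}$ generically locally identifiable from the coefficient map means, by Proposition~\ref{prop:jacobian}, that ${\rm Jac}(\widetilde c)$ has generic rank $|E|+1$. So there are $|E|+1$ rows whose corresponding $(|E|+1)\times(|E|+1)$ minor $\det \widetilde J$ is a nonzero polynomial in $\Q[a_{ji}, a_{01}]$. Writing the new parameter column separately as in~\eqref{eq:jac-c-tilde}, ${\rm Jac}(\widetilde c)$ has the block form $\left({\rm Jac}(c) + a_{01}K \;\middle|\; \partial \widetilde c / \partial a_{01}\right)$ after dropping to these rows. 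The goal is to conclude that ${\rm Jac}(c)$ has generic rank $|E|$, i.e. that some $|E|\times|E|$ minor of the first $|E|$ columns of ${\rm Jac}(c)$ is a nonzero polynomial.

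The second step is the linear-algebra heart of the argument. Expand $\det \widetilde J$ along its last column: it is an alternating sum of $(\partial \widetilde c_{i_\ell}/\partial a_{01})$ times $|E|\times|E|$ minors of the matrix ${\rm Jac}(c) + a_{01}K$ (with one row deleted). Now set $a_{01}=0$. Since $\det \widetilde J$ is a nonzero polynomial, I need $\det \widetilde J$ to not be divisible by $a_{01}$ — equivalently $\left(\det \widetilde J\right)|_{a_{01}=0} \neq 0$ — so that the $a_{01}=0$ specialization of the cofactor expansion is a nontrivial identity. Granting this, $\left(\det \widetilde J\right)|_{a_{01}=0}$ is an alternating sum of $\left(\partial \widetilde c_{i_\ell}/\partial a_{01}\right)|_{a_{01}=0}$ times $|E|\times|E|$ minors of ${\rm Jac}(c)$; since the left side is a nonzero polynomial, at least one such minor of ${\rm Jac}(c)$ is a nonzero polynomial, which is exactly generic rank $|E|$ for ${\rm Jac}(c)$, hence $\mathcal M$ is generically locally identifiable from the coefficient map by Proposition~\ref{prop:jacobian}.

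The main obstacle — and the step that needs real work — is exactly the claim that $a_{01}$ does \emph{not} divide $\det \widetilde J$ (or, more weakly, that \emph{some} nonvanishing minor of ${\rm Jac}(\widetilde c)$ survives specialization at $a_{01}=0$). This is where the hypotheses "strongly connected" and "exactly one leak" must enter, since without them the statement is false (cf. Example~\ref{ex:add-delete-leak}). One approach: use the matrix-tree structure, as in the proof of Theorem~\ref{thm:add-leak}, to control how $a_{01}$ appears. By~\cite[Proposition~4.6]{singularlocus}, the coefficients of the input-output equations are (signed, weighted) sums over forests/spanning trees of $H$, and adding the single leak at compartment-$1$ replaces certain tree-sums rooted away from the leak by tree-sums that "see" $a_{01}$ linearly. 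Tracking which coefficients pick up $a_{01}$ and showing that the $a_{01}$-free part of ${\rm Jac}(\widetilde c)$ — which is precisely ${\rm Jac}(c)$ — cannot drop rank while the full matrix has rank $|E|+1$ would require showing that the extra column $\partial \widetilde c/\partial a_{01}$ is "generically independent" of ${\rm Jac}(c)$ in a way compatible with specialization; the cleanest formulation is to prove directly that $\left(\det \widetilde J\right)|_{a_{01}=0}$ is nonzero for a well-chosen row set, perhaps by choosing the first row to be the coefficient $\widetilde c_1$ of~\eqref{eq:first-coef} (whose $a_{01}$-derivative is the nonzero, $a_{01}$-free spanning-tree sum $\sum_{\mathcal T \in \tau_1} \pi_{\mathcal T}$) and mimicking the computation~\eqref{eq:det-J-tilde} in reverse. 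If that reverse computation goes through, it shows $\left(\det \widetilde J\right)|_{a_{01}=0} = \pm\left(\sum_{\mathcal T \in \tau_1}\pi_{\mathcal T}\right)\det J'$ where $J'$ is an $|E|\times|E|$ minor of ${\rm Jac}(c)$, and one then argues that since $\det \widetilde J \neq 0$ forces this specialization or some nearby minor to be nonzero, $\det J'$ (or an analogous minor) is nonzero — completing the proof.
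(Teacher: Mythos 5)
This statement is an open conjecture in the paper: the authors prove only the special case in Proposition~\ref{prop:removeleaks} (input, output, and leak all in one compartment), so there is no full proof to compare against. Your proposal correctly sets up the reduction that mirrors the proof of Theorem~\ref{thm:add-leak}: the relation $\widetilde c = c + a_{01}v$ with $v$ free of $a_{01}$ is right, and your cofactor expansion shows that \emph{if} some $(|E|+1)\times(|E|+1)$ minor $\det\widetilde J$ of ${\rm Jac}(\widetilde c)$ is not divisible by $a_{01}$, then specializing $a_{01}=0$ exhibits a nonzero $|E|\times|E|$ minor of ${\rm Jac}(c)$, and Proposition~\ref{prop:jacobian} finishes. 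But the step you flag as ``the main obstacle'' is not a technicality to be deferred --- it \emph{is} the conjecture. Nothing in your argument rules out the scenario where every maximal minor of ${\rm Jac}(\widetilde c)$ vanishes on the hyperplane $a_{01}=0$ while ${\rm Jac}(c)$ has rank at most $|E|-1$, and the formal structure $\widetilde c = c + a_{01}v$ alone cannot rule it out: take $|E|=1$, $k=2$, $c=(1,0)$ and $v=(a,1)$ in the single edge parameter $a$; then $c$ is unidentifiable in $a$, yet $\widetilde c=(1+a_{01}a,\,a_{01})$ is globally identifiable in $(a,a_{01})$. So identifiability genuinely can be created by adding a parameter that enters only linearly, and any proof must use the specific combinatorial structure of the coefficients (the spanning-forest formulas of \cite[Theorem 4.5 and Proposition 4.6]{singularlocus}, strong connectedness, exactly one leak) to exclude this for linear compartmental models.

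Your suggested fix --- choose the row of $\widetilde c_1 = a_{01}\sum_{\mathcal T\in\tau_1}\pi_{\mathcal T}$ and ``reverse'' the computation in~\eqref{eq:det-J-tilde} --- also does not close the gap as stated: the hypothesis only guarantees that \emph{some} choice of $|E|+1$ rows gives a nonzero minor, not that a nonzero minor exists whose row set contains the $\widetilde c_1$-row, and even for such a row set the resulting identity $(\det\widetilde J)|_{a_{01}=0}=\pm\bigl(\sum_{\mathcal T\in\tau_1}\pi_{\mathcal T}\bigr)\det J'$ only helps if you already know the left-hand side is nonzero, which is again the missing nondivisibility claim. In short: the linear-algebra frame is sound and is the natural first step, but the essential content --- showing that for strongly connected models with exactly one leak the rank of ${\rm Jac}(\widetilde c)$ cannot collapse upon setting $a_{01}=0$ (or, equivalently, that identifiability cannot be an artifact of the leak parameter) --- is left unproven, which is precisely why the paper states this as Conjecture~\ref{conj:iff-leak} and proves only the single-compartment case by a different, explicit change-of-coordinates argument.
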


The next result resolves one case 
of Conjecture~\ref{conj:iff-leak}.
%is a partial converse to Theorem~\ref{thm:add-leak}.

\begin{proposition} %[Removing one leak]
 \label{prop:removeleaks} 
	Let $\mathcal M$ be a linear compartmental model that is strongly connected, and has an input, output, and leak in a single compartment (and has no other inputs, outputs, or leaks).  If $\mathcal M$ is generically locally identifiable from the coefficient map, then so is the model obtained from $\mathcal M$ by removing the leak.
\end{proposition}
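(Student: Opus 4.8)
The plan is to reduce to Proposition~\ref{prop:jacobian} and then to show that, for a model of this special shape, adding the leak changes the relevant coefficient map only by an invertible, $a_{01}$-dependent linear substitution. Write $\mathcal M=(G,\{1\},\{1\},\{1\})$ for the given model (so the leak parameter is $a_{01}$) and let $\mathcal N$ be the model obtained by deleting the leak; both are strongly connected with the same edge set $E$, the parameters of $\mathcal M$ are $(a,a_{01})$ with $a=(a_{ji})_{(i,j)\in E}$, and the parameters of $\mathcal N$ are $a$. Since $G$ is strongly connected, the output-reachable subgraph to $y_1$ is all of $G$ in either model, so by Theorem~\ref{thm:ioscc} each model has a single input-output equation, namely $\det(\partial I-M)\,y_1=\det(\partial I-M)_{11}\,u_1$ with $M$ the relevant compartmental matrix. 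Let $c\colon\mathbb R^{|E|}\to\mathbb R^{k}$ and $\widetilde c\colon\mathbb R^{|E|+1}\to\mathbb R^{k}$ be the coefficient maps of $\mathcal N$ and $\mathcal M$; by Proposition~\ref{prop:jacobian} it suffices to prove that, if $\mathrm{Jac}(\widetilde c)$ has generic rank $|E|+1$, then $\mathrm{Jac}(c)$ has generic rank $|E|$.

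The first step is to pin down the relation between $c$ and $\widetilde c$. Let $B$ and $A$ be the compartmental matrices of $\mathcal N$ and $\mathcal M$; then $A=B-a_{01}E_{11}$, so $\partial I-A=(\partial I-B)+a_{01}E_{11}$, and a rank-one cofactor expansion gives $\det(\partial I-A)=\det(\partial I-B)+a_{01}\det(\partial I-B)_{11}$, while $\det(\partial I-A)_{11}=\det(\partial I-B)_{11}$ because deleting row and column $1$ deletes the only altered entry. Comparing coefficients of the differential monomials $y_1^{(m)}$ and $u_1^{(m)}$ on the two sides, this translates into $\widetilde c=(I+a_{01}L)\,c$, where $L$ is the fixed linear endomorphism of $\mathbb R^{k}$ that, for every $m$, relabels the $u_1^{(m)}$-coordinate as the $y_1^{(m)}$-coordinate and sends every coordinate of the form $y_1^{(m)}$ to $0$. (The relabelling works even at the top because $\det(\partial I-B)_{11}$ is monic of degree $n-1$, so the leading correction coefficient is $1$, matching the leading coefficient of the $u_1$-side.) Because $L$ carries $u$-coordinates into $y$-coordinates and annihilates all $y$-coordinates, $L^{2}=0$.

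The key step is then a one-line rank computation. In the coordinates $(a,a_{01})$,
\[
\mathrm{Jac}(\widetilde c)=\bigl(\,(I+a_{01}L)\,\mathrm{Jac}(c)\ \mid\ L\,c\,\bigr),
\]
the last block being the column $\partial\widetilde c/\partial a_{01}=L\,c$. Since $L$ is nilpotent, $I+a_{01}L$ is invertible over $\mathbb Q(a,a_{01})$ with inverse $I-a_{01}L$, and left multiplication by an invertible matrix preserves rank, so
\[
(I-a_{01}L)\,\mathrm{Jac}(\widetilde c)=\bigl(\,\mathrm{Jac}(c)\ \mid\ (I-a_{01}L)L\,c\,\bigr)=\bigl(\,\mathrm{Jac}(c)\ \mid\ L\,c\,\bigr)
\]
using $L^{2}=0$. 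Hence $\mathrm{rank}\,\mathrm{Jac}(\widetilde c)=\mathrm{rank}\,(\,\mathrm{Jac}(c)\mid L\,c\,)$ identically in $a_{01}$. If $\mathrm{Jac}(\widetilde c)$ has generic rank $|E|+1$, then so does the $k\times(|E|+1)$ matrix $(\,\mathrm{Jac}(c)\mid L\,c\,)$, and deleting its last column drops the rank by at most $1$, so $\mathrm{Jac}(c)$ has generic rank $|E|$; by Proposition~\ref{prop:jacobian}, $\mathcal N$ is generically locally identifiable from the coefficient map. (This also makes precise Remark~\ref{rmk:similar-pf-s-locus}: the maximal minor of $\mathrm{Jac}(\widetilde c)$ that witnesses identifiability equals, up to sign, a maximal minor of $(\,\mathrm{Jac}(c)\mid L\,c\,)$, which is free of $a_{01}$.)

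The part I expect to require the most care is the identification $\widetilde c=(I+a_{01}L)c$: one has to check that the correction produced by adding the leak is exactly a fixed \emph{linear} reshuffling of the coefficient map of $\mathcal N$. This is precisely where the hypothesis that input, output, and leak all lie in a single compartment is used — there is a single input-output equation, and its $u_1$-side coefficients $\det(\partial I-B)_{11}$ are \emph{unchanged} by the leak, so they reappear verbatim (after the relabelling $L$) on the $y_1$-side. For more general placements of the leak, as in Conjecture~\ref{conj:iff-leak}, adding the leak can perturb several input-output equations at once and can create a nontrivial input-output GCD, and this clean linear relation fails; that is why the general case remains open. (The degenerate case $n=1$, where $\mathcal N$ has no parameters, is immediate and can be treated separately.)
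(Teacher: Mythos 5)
Your proof is correct, and it reaches the conclusion along the same overall skeleton as the paper's argument --- relate the two coefficient maps through $a_{01}$-multiples of the $u_1$-side coefficients, then conclude via a Jacobian-rank argument and Proposition~\ref{prop:jacobian} --- but the two key steps are implemented differently, and your implementation buys something. First, the paper obtains the coefficient relations (its equations~\eqref{eq:reln-coef}, with the roles of the tildes reversed relative to your notation) from the spanning-forest formula of \cite[Theorem 4.5]{singularlocus}, whereas you derive them from the elementary expansion $\det(\partial I-A)=\det(\partial I-B)+a_{01}\det(\partial I-B)_{11}$ together with $(\partial I-A)_{11}=(\partial I-B)_{11}$, which makes this step self-contained. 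Second, the paper extracts the rank statement by factoring the with-leak coefficient map as $\nu\circ\phi$, where $\phi$ records $a_{01}$ together with the leak-free coefficients and $\nu$ is a substitution on the target whose Jacobian determinant is $\pm d_0$; its invertibility is therefore only generic and requires knowing $d_0\not\equiv 0$. You instead left-multiply $\mathrm{Jac}(\widetilde c)$ by the unipotent matrix $I-a_{01}L$ (using $L^2=0$), which is invertible identically, producing the $a_{01}$-free matrix $\bigl(\mathrm{Jac}(c)\mid Lc\bigr)$, from which deleting one column gives the desired rank $|E|$. Your route thus avoids both the external forest-formula citation and the nonvanishing check for $d_0$. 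One small point to tighten: the identity $\widetilde c=(I+a_{01}L)c$ holds verbatim only if the constant coefficient $1$ of $u_1^{(n-1)}$ is included as a coordinate of the coefficient vector; if, as in the paper, only the nonconstant coefficients are kept, the top relation contributes an affine term $a_{01}e$ with $e$ the $y_1^{(n-1)}$ basis vector. Since $Le=0$, one then gets $(I-a_{01}L)\,\mathrm{Jac}(\widetilde c)=\bigl(\mathrm{Jac}(c)\mid Lc+e\bigr)$, still free of $a_{01}$, so the argument goes through unchanged; you flagged this, but it is worth stating the affine correction explicitly.
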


\begin{proof}
Assume that $\mathcal M=(G,\{1\},\{1\},\{1\})$ is a generically
locally identifiable (from the coefficient map), strongly connected linear compartmental model with $n$ compartments and an input, output, and leak in  
compartment-1 (and no other inputs, outputs, or leaks).  Let $\widetilde{\mathcal{M}}$ be the model obtained from $\mathcal M$ by removing the leak.

%As $\mathcal M$ is strongly connected, 
The input-output equation of $\mathcal M$ 
arising from Theorem~\ref{thm:ioscc}
has the following form:
\[
	 y_1^{(n)} + c_{n-1}  y_1^{(n-1)} + \dots  + c_1 y_1' + c_0y_1 
	 ~=~ 
	  u_1^{(n-1)} + d_{n-2} u_1^{(n-2)} + \dots  + d_1 u_1' + d_0 u_1~.
\]
The compartmental matrix for $\mathcal M$, denoted by $A$, comes from adding $-a_{01}$ to the $(1,1)$-entry of $\widetilde A$, the compartmental matrix for 
$\widetilde{\mathcal{M}}$.  Therefore, the matrices $(\partial I- A)_{11}$ and $(\partial I- \widetilde A)_{11}$ are equal. 
So, by Theorem \ref{thm:ioscc}, %Proposition~\ref{prop:i-o}, %by Theorem~\ref{thm:i-o-for-strongly-connected}, 
the right-hand side of the input-output equation for $\widetilde{\mathcal M}$ coincides with that for $\mathcal M$ (and these coefficients in common do not involve the leak parameter $a_{01}$).  We therefore write  
the input-output equation for $\widetilde {\mathcal M}$ as follows:
\[
	 y_1^{(n)} + \widetilde{c}_{n-1}  y_1^{(n-1)} + \dots  + \widetilde{c}_1 y_1' + 0 
	 ~=~ 
	  u_1^{(n-1)} + d_{n-2} u_1^{(n-2)} + \dots  + d_1 u_1' + d_0 u_1~.
\]
The coefficient of $y_1$ is 0, 
because this coefficient is the constant term of  $\det (\partial I - A)$, which is $ \pm \det A$, and this determinant is 0 (the column sums of $A$ are zero).

The resulting coefficient map for $\mathcal M$ is:
\[
	{\bf c}_{\mathcal M} :=
	(c_{n-1},~\dots,~ c_1,~ c_0,~ d_{n-2},~ \dots,~ d_1,~ d_0):
		\mathbb{R}^{|E|+1} \to \mathbb{R}^{2n-1}~,
\]
where $E$ is the edge set of $G$; 
and the coefficient map for $\widetilde{\mathcal M}$ is:
\[
	{\bf c}_{\widetilde{\mathcal M}} :=
	(\widetilde{c}_{n-1},~\dots,~ \widetilde{c}_1,~ d_{n-2},~ \dots,~ d_1,~ d_0):
		\mathbb{R}^{|E|} \to \mathbb{R}^{2n-2}~.
\]

From \cite[Theorem 4.5]{singularlocus}, the coefficients $c_i$ and $d_i$ 
are sums of products of edge labels, where the sum is taken over all $(n-i)$-edge (respectively, $(n-i-1)$-edge) spanning incoming forests 
 in the ``leak-augmented graph'' of $G$ (respectively, a related graph obtained by deleting compartment-1).  The coefficients $\widetilde{c}_i$ have a similar interpretation, where 
 now the leak-augmented graph is simply $G$, as $\widetilde {\mathcal M}$ has no leaks.  It is straightforward to check, then, that the following equalities hold:
 \begin{align} \label{eq:reln-coef}
 \notag
 	\widetilde{c}_{n-1} ~&=~ c_{n-1} -a_{01} \\ %d_{n-1} \\
\notag
 	\widetilde{c}_{n-2} ~&=~ c_{n-2} -a_{01}d_{n-2}\\
	&~~ \vdots\\
\notag
 	\widetilde{c}_1~&=~ c_{1} -a_{01}d_{1}\\
\notag
 	0		~&=~ c_{0} -a_{01}d_{0}~.
 \end{align}
We claim that ${\bf c}_{\mathcal M}$, the coefficient map for $\mathcal M$,
is generically finite-to-one if and only if 
the following map is generically finite-to-one: 
%and moreover the ``singular locus'' does not contain $\{a_{01}=0\}$:
\[
	\phi:=(a_{01},~\widetilde{c}_{n-1},~\dots,~ \widetilde{c}_1,~ d_{n-2},~ \dots,~ d_1,~ d_0):
		\mathbb{R}^{|E|+1} \to \mathbb{R}^{2n-1}~.
\]
To verify this claim, we first define the map:
\begin{align*}
	\nu:
		\mathbb{R}^{2n-1} &\to \mathbb{R}^{2n-1}\\
		(a_{01}^*,~\widetilde{c}_{n-1}^*,~\dots,~ \widetilde{c}_1^*,~ d_{n-2}^*,~ \dots,~ d_0^*) & \mapsto
		(\widetilde{c}_{n-1}^*+a_{01}^*,~\widetilde{c}_{n-2}^*+a_{01}^*d_{n-2}^*,~
		~\dots,~ \widetilde{c}_1^*+a_{01}^*d_{1}^*,~
		a_{01}^*d_0^*,~ \\
	& \quad \quad	\quad	 d_{n-2}^*,~ \dots,~ d_0^*) ~.
\end{align*}
By the equations~\eqref{eq:reln-coef}, we have that $\nu \circ \phi = {\bf c}_{\mathcal M}$.  Also, it is straightforward to check that ${\rm Jac}~\nu= \pm d_0$, which is a nonzero polynomial, and so this Jacobian matrix is generically full rank.  Our claim now follows directly.

%Hence, ${\bf c}_{\mathcal M}$ is generically finite-to-one if and only if $\phi$ is generically finite-to-one.

Hence, the Jacobian matrix of $\phi$, displayed below (we order the variables with $a_{01}$  first, so it corresponds to the first column), generically has (full) rank=$2n-1$:
\[
%	\det 
{\rm Jac}~\phi~=~
	 \left( \begin{array}{c|ccc}
	1 & 0 &  \dots & 0  \\
\hline
	0 &~& ~& ~  \\
	 \vdots&~& {\rm Jac}~{\bf c}_{\widetilde{\mathcal M}} & ~  \\
	0&~& ~& ~ \\
 \end{array} \right)~.
 \]
 Therefore, ${\rm Jac}~{\bf c}_{\widetilde{\mathcal M}}$, the
 Jacobian matrix of the coefficient map for $\widetilde{\mathcal M}$, 
 generically has (full) rank=$2n-2$.  So, by definition
   and
Proposition~\ref{prop:jacobian}, 
 the model $\widetilde{\mathcal M}$ is generically locally
 identifiable from the coefficient map. % ${\bf c}_{\widetilde{\mathcal M}} $
\end{proof}

We now apply Proposition~\ref{prop:removeleaks}
to three well-known families of linear compartmental models: catenary (path graph) models, mammillary (star graph) models, and cycle models~\cite{godfrey}.  
See Figures~\ref{fig:cat} and~\ref{fig:cycle}.
For these models, adding a leak to compartment-1 yields a model that is generically locally identifiable~\cite{cobelli-constrained-1979, singularlocus, MeshkatSullivant, MeshkatSullivantEisenberg}.  So, Proposition~\ref{prop:removeleaks} yields the following result:

\begin{proposition} \label{prop:3-models}
These linear compartmental models are generically locally identifiable
from the coefficient map:
\begin{enumerate}
\item the $n$-compartment catenary (path) model in Figure~\ref{fig:cat} (for $n \geq 2$),
\item the $n$-compartment cycle model in Figure~\ref{fig:cycle} (for $n \geq 3$), and
\item the $n$-compartment mammillary (star) model in Figure~\ref{fig:cycle} (for $n \geq 2$).
\end{enumerate}
\end{proposition}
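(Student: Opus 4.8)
The plan is to obtain all three statements at once by applying Proposition~\ref{prop:removeleaks} to the ``leak-augmented'' versions of these models. For each of the three families and each $n$ in the stated range, let $\mathcal M$ be the model obtained by placing an input, an output, and a leak all in compartment~1, with no other inputs, outputs, or leaks. First I would check that $\mathcal M$ satisfies the structural hypotheses of Proposition~\ref{prop:removeleaks}: the underlying digraph is the bidirectional path (catenary), the bidirectional star (mammillary), or the directed $n$-cycle (cycle model), each of which is strongly connected, and by construction compartment~1 is the unique input, the unique output, and the unique leak. So $\mathcal M$ is a strongly connected linear compartmental model with input, output, and leak in a single compartment and nothing else --- exactly as Proposition~\ref{prop:removeleaks} requires.

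Next I would invoke the literature: each such $\mathcal M$ --- a catenary, mammillary, or cycle model with input, output, and leak in compartment~1 --- is known to be generically locally identifiable~\cite{cobelli-constrained-1979, singularlocus, MeshkatSullivant, MeshkatSullivantEisenberg}. One small point has to be addressed before feeding this into Proposition~\ref{prop:removeleaks}, namely that the notion needed there is identifiability \emph{from the coefficient map} (Definition~\ref{defn:identify}). This causes no difficulty here: since $\mathcal M$ is strongly connected with at least one input, the output-reachable subgraph to $y_1$ is all of $G$, and by Proposition~\ref{prop:i-o-for-strongly-connected} the input-output GCD of $y_1$ is~$1$; hence the input-output equation $\det(\partial I - A)\, y_1 = \det(\partial I - A)_{11}\, u_1$ supplied by Theorem~\ref{thm:ioscc} is already ``reduced'' and, after normalization to be monic, is the input-output equation in a characteristic set, so the coefficient maps of Definitions~\ref{defn:identifygeneral} and~\ref{defn:identify} coincide for $\mathcal M$.

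With this in hand, Proposition~\ref{prop:removeleaks} applies and shows that the model obtained from $\mathcal M$ by deleting the leak from compartment~1 is generically locally identifiable from the coefficient map. But that model is precisely the $n$-compartment catenary, cycle, or mammillary model of Figure~\ref{fig:cat} or Figure~\ref{fig:cycle}, with input and output in compartment~1 and no leak. Running this argument for each family and each $n$ in the stated range yields all three claims. The only genuinely nontrivial ingredient is the identifiability of the leak-augmented models, which is imported from the cited works rather than reproved; the remaining obstacle is merely the bookkeeping that ties the ``from the coefficient map'' hypothesis of Proposition~\ref{prop:removeleaks} to those classical identifiability results, and that is handled by the GCD-equals-one observation above.
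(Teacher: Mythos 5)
Your proposal is correct and follows essentially the same route as the paper: the paper's proof is precisely to note that adding a leak in compartment~1 to each of the catenary, cycle, and mammillary models yields a model known from prior work to be generically locally identifiable, and then to apply Proposition~\ref{prop:removeleaks}. Your additional remarks reconciling Definitions~\ref{defn:identifygeneral} and~\ref{defn:identify} via Proposition~\ref{prop:i-o-for-strongly-connected} are extra bookkeeping that the paper glosses over, but they do not change the argument.
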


\begin{rmk} \label{rmk:cobelli}
Parts (1) and (3) of Proposition~\ref{prop:3-models} were previously
known~\cite[\S 3.1 and 4.1]{cobelli-constrained-1979}, while, to our knowledge, part (2) is new.  
The advantage of our approach over that in~\cite{cobelli-constrained-1979} is that we were able to avoid computations: Proposition~\ref{prop:removeleaks} allows us to immediately ``transfer'' a prior result to a related family of models.
\end{rmk}

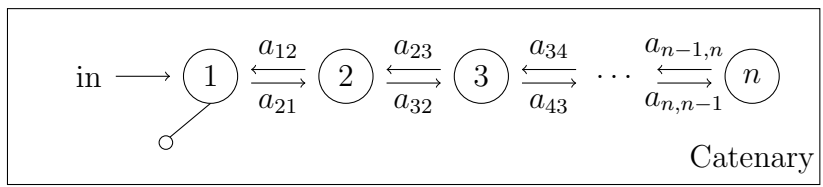
\begin{figure}[ht]
\begin{center}
	\begin{tikzpicture}[scale=1.8]
%-----------------------------
% Catenary (path)
%-----------------------------
 	\draw (0,0) circle (0.2);	
 	\draw (1,0) circle (0.2);	
 	\draw (2,0) circle (0.2);	
 	\draw (4,0) circle (0.2);	
% LABEL: numbers
    	\node[] at (0, 0) {1};
    	\node[] at (1, 0) {$2$};
    	\node[] at (2, 0) {$3$};
    	\node[] at (3, 0) {$\dots$};
    	\node[] at (4, 0) {$n$};
% ARROWS
	 \draw[<-] (0.3, 0.05) -- (0.7, 0.05);	
	 \draw[->] (0.3, -0.05) -- (0.7, -0.05);	
	 \draw[<-] (1.3, 0.05) -- (1.7, 0.05);	
	 \draw[->] (1.3, -0.05) -- (1.7, -0.05);	
	 \draw[<-] (2.3, 0.05) -- (2.7, 0.05);	
	 \draw[->] (2.3, -0.05) -- (2.7, -0.05);	
	 \draw[<-] (3.3, 0.05) -- (3.7, 0.05);	
	 \draw[->] (3.3, -0.05) -- (3.7, -0.05);	
% RATES
   	 \node[] at (0.5, 0.2) {$a_{12}$};
   	 \node[] at (1.5, 0.2) {$a_{23}$};
   	 \node[] at (2.5, 0.2) {$a_{34}$};
   	 \node[] at (3.5, 0.2) {$a_{n-1,n}$};
   	 \node[] at (0.5, -0.2) {$a_{21}$};
   	 \node[] at (1.5, -0.2) {$a_{32}$};
   	 \node[] at (2.5, -0.2) {$a_{43}$};
   	 \node[] at (3.5, -0.2) {$a_{n,n-1}$};
%OUTPUT
 	\draw (-0.33,-.49) circle (0.05);	
	 \draw[-] (0, -.2 ) -- (-0.3, -.45);	
% Input
	 \draw[->] (-0.7, 0) -- (-0.3, 0);	
   	 \node[] at (-.9, 0) {in};
% NO Leak
%	 \draw[->] (0, .25) -- (-0.3, .5);	
%   	 \node[] at (0, 0.45) {$a_{01}$};
% LABEL: Model name
    	\node[above] at (4, -0.8) {Catenary};
% BOX
\draw (-1.5,-.8) rectangle (4.5, .5);
	\end{tikzpicture}
\end{center}
\caption{The {\bf catenary} (path) model with $n$ compartments and no leaks, in which compartment-1 has an input and output (cf.~\cite[Figure 1]{singularlocus}).
}
\label{fig:cat}
\end{figure}

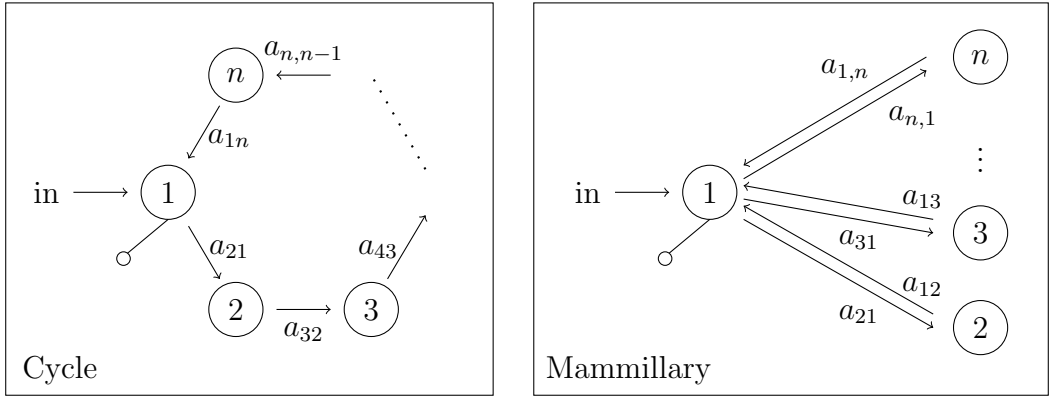
\begin{figure}[ht]
\begin{center}
	\begin{tikzpicture}[scale=1.8]
%-----------------------------
% CYCLE
%-----------------------------
 	\draw (-1,0) circle (0.2);	
% 	\draw (1,0) circle (0.2);	
 	\draw (-0.5,0.866) circle (0.2);	
 	\draw (0.5,-0.866) circle (0.2);	
 	\draw (-0.5,-0.866) circle (0.2);	
%	\draw (0.5,0.866) circle (0.2);	
% ARROWS
	 \draw[->] (-0.85, -0.25) -- (-0.62, -0.64 );
 	 \draw[->]  (-0.2,-0.866) -- (0.2,-0.866) ;
	 \draw[->]  (0.62, -0.64) -- (0.9,-0.17); 
	 \draw[loosely dotted,thick] (0.9,0.17) -- (0.5,0.866);
	 \draw[->]  (0.2,0.866) -- (-0.2,0.866);
	 \draw[->] (-0.62, 0.64 ) -- (-0.85, 0.25)  ;
%	 \draw[->]  (-0.5,0.866)-- (-1,0);
%	 \draw[->] (-0.8, 0) -- (-0.7, -0.9);	
% RATES
   	 \node[] at (0.0, -1.03) {$a_{32}$};
   	 \node[] at (0.0, 1.03) {$a_{n,n-1}$};
   	 \node[] at (-0.55, -0.42) {$a_{21}$};
   	 \node[] at (-0.55, 0.4) {$a_{1n}$};
   	 \node[] at (0.55, -0.42) {$a_{43}$};
% LABEL: numbers
    	\node[] at (-1, 0) {1};
    	\node[] at  (-0.5,-0.866) {$2$};
    	\node[] at  (+0.5,-0.866) {$3$};
    	\node[] at (-0.5,0.866) {$n$};
%OUTPUT
 	\draw (-1.33,-.49) circle (0.05);	
	 \draw[-] (-1, -.2 ) -- (-1.3, -.45);	
% Input
	 \draw[->] (-1.7, 0) -- (-1.3, 0);	
   	 \node[] at (-1.9, 0) {in};
% NO Leak
%	 \draw[->] (-1.1, .25) -- (-1.4, .5);	
%   	 \node[] at (-1.1, 0.45) {$a_{01}$};
% LABEL: Model name
    	\node[above] at (-1.8, -1.5) {Cycle};
 % BOX
\draw (-2.2,-1.5) rectangle (1.4, 1.4);
%-----------------------------
% MAMMILARY (star)
%-----------------------------
 	\draw (3,0) circle (0.2);
 	\draw (5,-1) circle (0.2);
 	\draw (5,-0.3) circle (0.2);
 	\draw (5,1) circle (0.2);
% LABEL: numbers
    	\node[] at (3, 0) {1};
    	\node[] at (5, -1) {2};
    	\node[] at (5, -0.3) {3};
    	\node[] at (5, 0.3) {$\vdots$};
    	\node[] at (5, 1) {$n$};
% ARROWS
	 \draw[->] (3.25, -.2) -- (4.65, -1);
	 \draw[<-] (3.25, -.1) -- (4.65, -0.9);
	 \draw[->] (3.25, -0.05) -- (4.65, -0.3);
	 \draw[<-] (3.25, 0.05) -- (4.65, -0.2);
	 \draw[->] (3.25, 0.1) -- (4.6, 0.9);
	 \draw[<-] (3.25, 0.2) -- (4.6, 1);
% RATES
   	 \node[] at (4.1, -0.9) {$a_{21}$};
   	 \node[] at (4.57, -0.7) {$a_{12}$};
   	 \node[] at (4.57, -0.05) {$a_{13}$};
   	 \node[] at (4.1, -0.35) {$a_{31}$};
   	 \node[] at (4, 0.9) {$a_{1,n}$};
   	 \node[] at (4.5, 0.55) {$a_{n,1}$};
%OUTPUT
 	\draw (2.67,-.49) circle (0.05);	
	 \draw[-] (3, -.2 ) -- (2.7, -.45);	
% Input
	 \draw[->] (2.3, 0) -- (2.7, 0);	
   	 \node[] at (2.1, 0) {in};
% NO Leak
%	 \draw[->] (2.9, .25) -- (2.6, .5);	
%   	 \node[] at (2.9, 0.45) {$a_{01}$};
 % BOX
\draw (1.7,-1.5) rectangle (5.5, 1.4);
% LABEL: Model name
    	\node[above] at (2.4, -1.5) {Mammillary};
	\end{tikzpicture}
\end{center}
\caption{Two models with $n$ compartments and no leaks, where compartment-1 has an input and output (cf.~\cite[Figure 2]{singularlocus}).
{ Left:} The {\bf cycle}.  { Right:} The {\bf mammillary} (star). 
}
\label{fig:cycle}
\end{figure}

\section{Examples of adding or removing inputs, outputs, leaks, or edges} \label{sec:ex}
This section compiles examples of linear compartmental models that show that 
when a leak or edge is added -- or when an input, output, leak, or edge is deleted -- identifiability is sometimes preserved and sometimes lost (as summarized in Table~\ref{table:add-delete}).
Our examples also show, conversely, that 
when a leak or edge is deleted -- or when an input, output, leak, or edge is added -- 
unidentifiability is sometimes preserved and sometimes lost (Table~\ref{table:add-delete}).

In the following examples, 
each input-output equation we present comes from a
  characteristic set (for the model under consideration). 
This fact can be checked, by hand or using {\tt DAISY}~\cite{daisy},
% equations we assert are 
%input-output equations 
%can be checked by hand  
%or using {\tt DAISY} \cite{daisy}. %or via Proposition~\ref{prop:i-o} or 
%Remark~\ref{rmk:1-out}.

% EXAMPLE - LEAK
\begin{eg}[Add or delete a leak] \label{ex:add-delete-leak}
We consider models $\mathcal M$ and $\widetilde{\mathcal M}$, where 
$\widetilde{\mathcal M}$ is obtained from $\mathcal M$ by adding a leak (or, equivalently, 
$\mathcal M$ is obtained from $\widetilde{\mathcal M}$ by deleting a leak).

By Theorem~\ref{thm:add-leak}, 
when $\mathcal M$ is strongly connected and has inputs but no leaks, then if $\mathcal M$ is identifiable, then $\widetilde{\mathcal M}$ is too.

Next, let $\mathcal M$,  $\widetilde{\mathcal M}$, and 
$\widetilde{\mathcal M}'$ 
denote the models on the left, middle, and right, respectively:
\begin{center}
	\begin{tikzpicture}[scale=1.6]
%-----------------------------
% No leak
%-----------------------------
 	\draw (0,0) circle (0.2);	
 	\draw (1,0) circle (0.2);	
% LABEL: numbers
    	\node[] at (0, 0) {1};
    	\node[] at (1, 0) {$2$};
% ARROWS
	 \draw[->] (0.3, 0.05) -- (0.7, 0.05);	
%	 \draw[<-] (0.3, -0.05) -- (0.7, -0.05);	
% RATES
   	 \node[] at (0.5, 0.2) {$a_{21}$};
%OUTPUT
 	\draw (-0.33,-.49) circle (0.05);	
	 \draw[-] (0, -.2 ) -- (-0.3, -.45);	
%-----------------------------
% Leak
%-----------------------------
 	\draw (4,0) circle (0.2);	
 	\draw (5,0) circle (0.2);	
% LABEL: numbers
    	\node[] at (4, 0) {1};
    	\node[] at (5, 0) {$2$};
% ARROWS
	 \draw[->] (4.3, 0.05) -- (4.7, 0.05);	
%	 \draw[<-] (4.3, -0.05) -- (4.7, -0.05);	
% RATES
   	 \node[] at (4.5, 0.2) {$a_{21}$};
%OUTPUT
 	\draw (3.67,-.49) circle (0.05);	
	 \draw[-] (4, -.2 ) -- (3.7, -.45);	
% Leak
	 \draw[->] (4, .25) -- (3.7, .5);	
   	 \node[] at (4, 0.45) {$a_{01}$};
%-----------------------------
% 2 Leaks
%-----------------------------
 	\draw (8,0) circle (0.2);	
 	\draw (9,0) circle (0.2);	
% LABEL: numbers
    	\node[] at (8, 0) {1};
    	\node[] at (9, 0) {$2$};
% ARROWS
	 \draw[->] (8.3, 0.05) -- (8.7, 0.05);	
%	 \draw[<-] (4.3, -0.05) -- (4.7, -0.05);	
% RATES
   	 \node[] at (8.5, 0.2) {$a_{21}$};
%OUTPUT
 	\draw (7.67,-.49) circle (0.05);	
	 \draw[-] (8, -.2 ) -- (7.7, -.45);	
% Leak
	 \draw[->] (8, .25) -- (7.7, .5);	
   	 \node[] at (8, 0.45) {$a_{01}$};	
	 \draw[->] (9, .25) -- (8.7, .5);	
   	 \node[] at (9, 0.45) {$a_{02}$};	
	 \end{tikzpicture}
\end{center}
The model  $\mathcal M$ is identifiable (the input-output equation is $ y_1' + a_{21} y_1 = 0$),
while $\widetilde{\mathcal M}$ is not (the input-output equation is $ y_1' + (a_{01}+a_{21}) y_1 = 0$), nor is $\widetilde{\mathcal M}'$ (recall Example~\ref{ex:uniden}).
\end{eg}

Example~\ref{ex:add-delete-leak} does not give an example of 
an unidentifiable model $\mathcal M$
and an identifiable model $\widetilde{\mathcal M}$, where $\widetilde{\mathcal M}$ has one more leak than $\mathcal M$.  We do not know whether such an example exists, so we pose the following question.

\begin{question}\label{q:leak}
For an unidentifiable linear compartmental model $\mathcal M$, if one leak is added, is the resulting model always unidentifiable?
\end{question}

% EXAMPLE -EDGE
\begin{eg}[Add or delete an edge] \label{ex:add-delete-edge}
We consider models $\mathcal M$ and $\widetilde{\mathcal M}$, where 
$\widetilde{\mathcal M}$ comes from 
adding an edge to $\mathcal M$ (or, equivalently, 
$\mathcal M$ comes from deleting an edge of $\widetilde{\mathcal M}$).

In prior work, we showed that 
when both models are strongly connected, and the 
parameter of the 
deleted edge 
does not divide the ``singular-locus equation'' of 
$\widetilde{\mathcal M}$, then deleting the edge 
preserves generic local identifiability (see \cite[Example 3.2]{singularlocus}).  

On the other hand, 
the model shown in 
 the right-hand side of~\eqref{eq:model} 
in Example~\ref{ex:delete-out} 
below is identifiable~\cite{singularlocus}, but deleting the edge labeled $a_{12}$ yields an unidentifiable model.

Next, let $\mathcal M$ denote the model on the left, and $\widetilde{\mathcal M}$ the model on the right:
\begin{center}
	\begin{tikzpicture}[scale=1.8]
%-----------------------------
% Catenary (path) with n=2 & extra output
%-----------------------------
 	\draw (0,0) circle (0.2);	
 	\draw (1,0) circle (0.2);	
% LABEL: numbers
    	\node[] at (0, 0) {1};
    	\node[] at (1, 0) {$2$};
% ARROWS
	 \draw[->] (0.3, 0.05) -- (0.7, 0.05);	
%	 \draw[<-] (0.3, -0.05) -- (0.7, -0.05);	
% RATES
   	 \node[] at (0.5, 0.2) {$a_{21}$};
%   	 \node[] at (0.5, -0.2) {$a_{12}$};
%OUTPUT
 	\draw (0.67,-.49) circle (0.05);	
	 \draw[-] (1, -.2 ) -- (.7, -.45);	
%-----------------------------
% Catenary (path) with n=2
%-----------------------------
 	\draw (4,0) circle (0.2);	
 	\draw (5,0) circle (0.2);	
% LABEL: numbers
    	\node[] at (4, 0) {1};
    	\node[] at (5, 0) {$2$};
% ARROWS
	 \draw[->] (4.3, 0.05) -- (4.7, 0.05);	
	 \draw[<-] (4.3, -0.05) -- (4.7, -0.05);	
% RATES
   	 \node[] at (4.5, 0.2) {$a_{21}$};
   	 \node[] at (4.5, -0.2) {$a_{12}$};
%OUTPUT
 	\draw (4.67,-.49) circle (0.05);	
	 \draw[-] (5, -.2 ) -- (4.7, -.45);	
	\end{tikzpicture}
\end{center}
The model  $\mathcal M$ is identifiable (the input-output equation is $y_2'' + a_{21}  y_2' = 0$),
while $\widetilde{\mathcal M}$ is not (the input-output equation is $y_2'' + (a_{12}+a_{21})  y_2' = 0$).

Finally, if $\mathcal{M}$ and $\widetilde{\mathcal M}$ have no outputs, or have more edges and leaks than the number of coefficients in the input-output equations, then both models will be unidentifiable.  For instance, let $\mathcal M$ denote the model on the left, and $\widetilde{\mathcal M}$ the model on the right:
\begin{center}
	\begin{tikzpicture}[scale=1.8]
%-----------------------------
% Catenary
%-----------------------------
 	\draw (0,0) circle (0.2);	
 	\draw (1,0) circle (0.2);	
% LABEL: numbers
    	\node[] at (0, 0) {1};
    	\node[] at (1, 0) {$2$};
% ARROWS
	 \draw[->] (0.3, 0.05) -- (0.7, 0.05);	
%	 \draw[<-] (0.3, -0.05) -- (0.7, -0.05);	
% RATES
   	 \node[] at (0.5, 0.2) {$a_{21}$};
%   	 \node[] at (0.5, -0.2) {$a_{12}$};
%OUTPUT
 	\draw (-0.33,-.49) circle (0.05);	
	 \draw[-] (0, -.2 ) -- (-0.3, -.45);	
% Leak
	 \draw[->] (0, .25) -- (-0.3, .5);	
   	 \node[] at (0, 0.45) {$a_{01}$};
%-----------------------------
% Catenary (path) with n=2
%-----------------------------
 	\draw (4,0) circle (0.2);	
 	\draw (5,0) circle (0.2);	
% LABEL: numbers
    	\node[] at (4, 0) {1};
    	\node[] at (5, 0) {$2$};
% ARROWS
	 \draw[->] (4.3, 0.05) -- (4.7, 0.05);	
	 \draw[<-] (4.3, -0.05) -- (4.7, -0.05);	
% RATES
   	 \node[] at (4.5, 0.2) {$a_{21}$};
   	 \node[] at (4.5, -0.2) {$a_{12}$};
%OUTPUT
 	\draw (3.67,-.49) circle (0.05);	
	 \draw[-] (4, -.2 ) -- (3.7, -.45);	
% Leak
	 \draw[->] (4, .25) -- (3.7, .5);	
   	 \node[] at (4, 0.45) {$a_{01}$};
	\end{tikzpicture}
\end{center}
Both models are unidentifiable 
($\mathcal M$ was analyzed in Example~\ref{ex:add-delete-leak}, 
%(the input-output equation for $\mathcal M$ is 
%	$\dot{y_1} + (a_{01}+a_{21})  y_1 = 0$,
	and 
	the input-output equation for 
 $\widetilde{\mathcal M}$ is 
 	$y_1'' + 
	(a_{01}+a_{12}+a_{21})  y_1' + (a_{01}a_{12})y_1 = 0$).

\end{eg}

\begin{eg}[Add or delete an input] \label{ex:delete-in}

We consider models $\mathcal M$ and $\widetilde{\mathcal M}$, where 
$\widetilde{\mathcal M}$ comes from 
adding an input to $\mathcal M$ (or, equivalently, 
$\mathcal M$ comes from deleting an input of $\widetilde{\mathcal M}$).
If $\mathcal M$ is identifiable, then $\widetilde{\mathcal M}$ is too (by Proposition~\ref{prop:add-in-out}, assuming there is at least one input) and so $\widetilde{\mathcal M}$ can not be unidentifiable. Here we show the remaining three combinations of identifiability/unidentifiability occur.

The following linear compartmental model is globally identifiable, and so is the model obtained by removing the input:

\begin{center}
	\begin{tikzpicture}[scale=1.8]
%-----------------------------
% Catenary (path)
%-----------------------------
 	\draw (0,0) circle (0.2);	
% LABEL: numbers
    	\node[] at (0, 0) {1};
%OUTPUT
 	\draw (-0.33,-.49) circle (0.05);	
	 \draw[-] (0, -.2 ) -- (-0.3, -.45);	
% Input
	 \draw[->] (-0.7, 0) -- (-0.3, 0);	
   	 \node[] at (-.9, 0) {in};
% Leak
	 \draw[->] (0, .25) -- (-0.3, .5);	
   	 \node[] at (0, 0.45) {$a_{01}$};
	\end{tikzpicture}
\end{center}	

On the other hand, the model displayed on the right-hand side of~\eqref{eq:model} 
in Example~\ref{ex:delete-out} 
below is identifiable~\cite{singularlocus},
but removing the input yields an unidentifiable model (as we saw in Example \ref{ex:add-delete-edge}, the input-output equation is 
	$y_1'' + (a_{01}+a_{12}+a_{21}) y_1' + (a_{01}a_{12}) y_1 = 0$).

Finally, let $\mathcal M$ denote the model on the left, and $\widetilde{\mathcal M}$ the model on the right:
\begin{center}
	\begin{tikzpicture}[scale=1.8]
%-----------------------------
% Leak
%-----------------------------
 	\draw (4,0) circle (0.2);	
 	\draw (5,0) circle (0.2);	
% LABEL: numbers
    	\node[] at (4, 0) {1};
    	\node[] at (5, 0) {$2$};
% ARROWS
	 \draw[->] (4.3, 0.05) -- (4.7, 0.05);	
%	 \draw[<-] (4.3, -0.05) -- (4.7, -0.05);	
% RATES
   	 \node[] at (4.5, 0.2) {$a_{21}$};
%OUTPUT
 	\draw (3.67,-.49) circle (0.05);	
	 \draw[-] (4, -.2 ) -- (3.7, -.45);	
% Leak
	 \draw[->] (4, .25) -- (3.7, .5);	
   	 \node[] at (4, 0.45) {$a_{01}$};
%-----------------------------
% add input downstream
%-----------------------------
 	\draw (8,0) circle (0.2);	
 	\draw (9,0) circle (0.2);	
% LABEL: numbers
    	\node[] at (8, 0) {1};
    	\node[] at (9, 0) {$2$};
% ARROWS
	 \draw[->] (8.3, 0.05) -- (8.7, 0.05);	
%	 \draw[<-] (4.3, -0.05) -- (4.7, -0.05);	
% RATES
   	 \node[] at (8.5, 0.2) {$a_{21}$};
%OUTPUT
 	\draw (7.67,-.49) circle (0.05);	
	 \draw[-] (8, -.2 ) -- (7.7, -.45);	
% Leak
	 \draw[->] (8, .25) -- (7.7, .5);	
   	 \node[] at (8, 0.45) {$a_{01}$};
% Input	 	
	 \draw[<-] (9, .25) -- (8.7, .5);	
   	 \node[] at (9, 0.45) {in};	
\end{tikzpicture}
\end{center}
Both models are unidentifiable ($\mathcal M$ was analyzed in Example~\ref{ex:add-delete-leak}, and
by Theorem~\ref{thm:ioscc}
 adding an input ``downstream'' from the output does not affect 
the input-output equation).
\end{eg}

\begin{eg}[Add or delete an output] \label{ex:delete-out}
We consider models $\mathcal M$ and $\widetilde{\mathcal M}$, where 
$\widetilde{\mathcal M}$ comes from 
adding an output to $\mathcal M$ (or, equivalently, 
$\mathcal M$ comes from deleting an output of $\widetilde{\mathcal M}$).
It is not possible for $\mathcal M$ to be identifiable while $\widetilde{\mathcal M}$ is not (by Proposition~\ref{prop:add-in-out}, assuming at least one input). Here we show the remaining three combinations of identifiability/unidentifiability.

The model on the right is generically locally identifiable~\cite{singularlocus},
and thus, by Proposition~\ref{prop:add-in-out}, so is the one on the left, which is obtained by adding an output:
\begin{center}
\begin{align} \label{eq:model}
	\begin{tikzpicture}[scale=1.8]
%-----------------------------
% Catenary (path) with n=2 & extra output
%-----------------------------
 	\draw (0,0) circle (0.2);	
 	\draw (1,0) circle (0.2);	
% LABEL: numbers
    	\node[] at (0, 0) {1};
    	\node[] at (1, 0) {$2$};
% ARROWS
	 \draw[->] (0.3, 0.05) -- (0.7, 0.05);	
	 \draw[<-] (0.3, -0.05) -- (0.7, -0.05);	
% RATES
   	 \node[] at (0.5, 0.2) {$a_{21}$};
   	 \node[] at (0.5, -0.2) {$a_{12}$};
%OUTPUT
 	\draw (-0.33,-.49) circle (0.05);	
	 \draw[-] (0, -.2 ) -- (-0.3, -.45);	
 	\draw (0.67,-.49) circle (0.05);	
	 \draw[-] (1, -.2 ) -- (.7, -.45);	
% Input
	 \draw[->] (-0.7, 0) -- (-0.3, 0);	
   	 \node[] at (-.9, 0) {in};
% Leak
	 \draw[->] (0, .25) -- (-0.3, .5);	
   	 \node[] at (0, 0.45) {$a_{01}$};
%-----------------------------
% Catenary (path) with n=2
%-----------------------------
 	\draw (4,0) circle (0.2);	
 	\draw (5,0) circle (0.2);	
% LABEL: numbers
    	\node[] at (4, 0) {1};
    	\node[] at (5, 0) {$2$};
% ARROWS
	 \draw[->] (4.3, 0.05) -- (4.7, 0.05);	
	 \draw[<-] (4.3, -0.05) -- (4.7, -0.05);	
% RATES
   	 \node[] at (4.5, 0.2) {$a_{21}$};
   	 \node[] at (4.5, -0.2) {$a_{12}$};
%OUTPUT
 	\draw (3.67,-.49) circle (0.05);	
	 \draw[-] (4, -.2 ) -- (3.7, -.45);	
% Input
	 \draw[->] (3.3, 0) -- (3.7, 0);	
   	 \node[] at (3.1, 0) {in};
% Leak
	 \draw[->] (4, .25) -- (3.7, .5);	
   	 \node[] at (4, 0.45) {$a_{01}$};
	\end{tikzpicture}
\end{align}
\end{center}

On the other hand, any identifiable model with only one output (such as the one 
depicted in Example~\ref{ex:delete-in}) becomes unidentifiable when the output is deleted.

Finally, let $\mathcal M$ denote the model on the left, and $\widetilde{\mathcal M}$ the model on the right:
\begin{center}
	\begin{tikzpicture}[scale=1.8]
%-----------------------------
% Catenary, no output
%-----------------------------
 	\draw (0,0) circle (0.2);	
 	\draw (1,0) circle (0.2);	
% LABEL: numbers
    	\node[] at (0, 0) {1};
    	\node[] at (1, 0) {$2$};
% ARROWS
	 \draw[->] (0.3, 0.05) -- (0.7, 0.05);	
	 \draw[<-] (0.3, -0.05) -- (0.7, -0.05);	
% RATES
   	 \node[] at (0.5, 0.2) {$a_{21}$};
   	 \node[] at (0.5, -0.2) {$a_{12}$};
%-----------------------------
% Catenary, 1 output
%-----------------------------
 	\draw (4,0) circle (0.2);	
 	\draw (5,0) circle (0.2);	
% LABEL: numbers
    	\node[] at (4, 0) {1};
    	\node[] at (5, 0) {$2$};
% ARROWS
	 \draw[->] (4.3, 0.05) -- (4.7, 0.05);	
	 \draw[<-] (4.3, -0.05) -- (4.7, -0.05);	
% RATES
   	 \node[] at (4.5, 0.2) {$a_{21}$};
   	 \node[] at (4.5, -0.2) {$a_{12}$};
%OUTPUT
 	\draw (4.67,-.49) circle (0.05);	
	 \draw[-] (5, -.2 ) -- (4.7, -.45);	
	\end{tikzpicture}
\end{center}
Then $\mathcal M$ is unidentifiable (it has no outputs), and so is 
$\widetilde{\mathcal M}$ (as shown in Example~\ref{ex:add-delete-edge}).

\end{eg}

\section{Discussion} \label{sec:discussion}
This work addresses some fundamental questions pertaining to identifiability of linear compartmental models.  Specifically, we proved results clarifying the effect of adding or removing parts of a model.  Along the way, we showed that a model's input-output equations are influenced by the model's output-reachable subgraph.

Our results together form a step toward addressing the important problem of cataloguing identifiable linear compartmental models.  In such a catalogue, it will be enough to include only identifiable models that are minimal with respect to their input and output sets, as those with more inputs or outputs 
are automatically identifiable (Proposition~\ref{prop:add-in-out}).
Another simplification is that certain models with no leaks have the same identifiability properties as those with exactly one leak (Theorem~\ref{thm:add-leak} and Proposition~\ref{prop:removeleaks}).
We expect that our results, and future related results, will shine light on the important problem of determining the precise properties that make models identifiable.

%*******************************************************************
%Acknowledgements
%*******************************************************************
\subsection*{Acknowledgements}
{\small
This project began at a SQuaRE (Structured Quartet Research Ensemble) at AIM, and the authors thank AIM for providing financial support and an excellent working environment.  
EG was supported by the NSF (DMS-1620109).  
HAH gratefully acknowledges funding from EPSRC Postdoctoral Fellowship (EP/K041096/1) and a Royal Society University Research Fellowship.  NM was partially supported by the Clare Boothe Luce Program from the Luce Foundation.  AS was partially supported by the NSF (DMS-1752672)
and the Simons Foundation (\#521874).  The authors thank Gleb Pogudin
and Peter Thompson for bringing the example in Remark~\ref{rmk:gleb}
to our attention, 
Alexey Ovchinnikov for helpful discussions, 
 and  two
conscientious referees for their helpful comments.
}

\bibliographystyle{plain}
\bibliography{square}
 
\end{document}